
\documentclass[12pt,a4paper]{amsart}
\usepackage[top=1.5in, bottom=1.25in, left=1.25in, right=1.25in]{geometry}

\usepackage{amsmath}
\usepackage{amssymb}
\usepackage{amsfonts}
\usepackage{palatino, mathrsfs}
\usepackage{url}
\usepackage{xcolor}
\usepackage{mathtools}
\usepackage{fullpage, microtype, colonequals}

\newtheorem{theorem}{Theorem}[section]
\newtheorem{definition}{Definition}
\newtheorem{lemma}[theorem]{Lemma}
\newtheorem{proposition}[theorem]{Proposition}

\newtheorem*{conjecture*}{Conjecture}
\newtheorem*{theorem*}{Theorem}
\newtheorem*{remark*}{Remark}
\numberwithin{equation}{section}

\begin{document}

\begin{center}

\title{Three-Dimensional stochastic Navier-Stokes equations with Markov switching}

\author{Po-Han Hsu$^\ast$}
\address{4415 French Hall-West, University of Cincinnati, Cincinnati, OH,  45221-0025, USA}
\email{hsupa@ucmail.uc.edu}

\author{Padmanabhan Sundar}
\address{316 Lockett Hall, Louisiana State University, Baton Rouge, LA, 70803-4918,  USA}
\email{psundar@lsu.edu}

\subjclass{Primary 60H15; Secondary 76D05}
\keywords{Stochastic Navier-Stokes equations, Markov switching, martingale problem}
\date{\today}

\thanks{$\ast$The author was partially supported by NSF grant DMS-1622026.}

\begin{abstract}
A finite-state Markov chain is introduced in the noise terms of the three-dimensional stochastic Navier-Stokes equations in order to allow for transitions between two types of multiplicative noises. We call such  systems as stochastic Navier-Stokes equations with Markov switching.  To solve such a system,  a family of regularized stochastic systems is introduced. For each such regularized system, the existence of a unique strong solution (in the sense of stochastic analysis) is established by the method of martingale problems and pathwise uniqueness. The regularization is removed in the limit by obtaining a weakly convergent  sequence from the family of regularized solutions, and identifying the limit as a solution of the three-dimensional stochastic Navier-Stokes equation with Markov switching. 
\end{abstract}
\maketitle
\end{center}
\section{Introduction}\label{intro}
Let $G$ be an open bounded domain in $\mathbb{R}^{3}$ with a smooth boundary. Let the three-dimensional vector-valued function ${\bf u}(x, t)$ and  the real-valued function $p(x, t)$ denote the velocity and pressure of the fluid at each $x\in G$ and time $t\in [0, T]$. The motion of viscous incompressible flow on $G$ with no slip at the boundary is described by the Navier-Stokes system:
\begin{align}
\partial_{t}{\bf u}-\nu\Delta{\bf u}+({\bf u}\cdot\nabla){\bf u}-\nabla p&={\bf f}(t)\quad&&\mbox{in}\quad G\times[0, T] \label{eq},
\\ \nabla\cdot {\bf u}&=0\quad&&\mbox{in}\quad G\times[0, T],\nonumber
\\ {\bf u}(x,t)&=0 \quad&&\mbox{on}\quad\partial G\times[0, T],\nonumber
\\ {\bf u}(x, 0)&={\bf u}_{0}(x)\quad&&\mbox{on}\quad G\times\{t=0\},\nonumber
\end{align}
where $\nu>0$ denotes the viscosity coefficient, and the function ${\bf f}(t)$ is an external body force.  The equation \eqref{eq} can be written in the abstract evolution form on a suitable space as follows:
\begin{align}\label{evolution B}
{\bf du}(t)+[\nu{\bf Au}(t)+{\bf B}({\bf u}(t))]dt={\bf f}(t)dt,
\end{align}
where ${\bf A}$ is the Stokes operator and ${\bf B}$ is the nonlinear inertial operator  introduced in Section \ref{pre}. 

A random body force, in the form of a multiplicative noise driven by a Wiener process  $W(t)$, is added to the model (see, e.g., \cite{turbulence}) so that one obtains
\begin{align*}
\mathbf{du}(t)+[\nu\mathbf{Au}(t)+\mathbf{B}(\mathbf{u}(t))]dt=\mathbf{f}(t)dt+\sigma(t, {\bf u}(t))dW(t).
\end{align*}
Originally, it was Kolmogorov who suggested the introduction of white noise on the right side of equation \eqref{evolution B} in order to investigate the existence of invariant measures (cf. Vishik and Fursikov \cite{VF}). From then on, several works on stochastic Navier-Stokes equations appeared with an additive or more generally, a multiplicative noise driven by a Wiener process.

In addition, if the noise is allowed to be ``discontinuous,'' then a term driven by a compensated Poisson random measure $\tilde{N_{1}}(dz, ds)$ (which is independent of $W(t)$) is added so that the equation becomes
 \begin{align}\label{eq tur}
&\mathbf{du}(t)+[\nu\mathbf{Au}(t)+\mathbf{B}(\mathbf{u}(t))]dt
\\&=\mathbf{f}(t)dt+\sigma(t, {\bf u}(t))dW(t)+\int_{Z}{\bf G}(t, {\bf u}(t-), z)\tilde{N_{1}}(dz, dt),\nonumber
\end{align}
where $\tilde{N_{1}}(dz, dt):=N_{1}(dz, dt)-\nu_{1}(dz)dt$ and $\nu_{1}(dz)dt$ is the intensity measure of $N_{1}(dz, dt)$. The rationale for the presence of a discontinuous noise (driven by a Poisson random measure) in equation \eqref{eq tur} is given in Birnir \cite{turbulence}. In short, discontinuities arise from the prevalence of point vorticities in  fluid flows in turbulent regime.

Stochastic Navier-Stokes systems have been studied by a number of authors at various levels of generality. Spurred by the works of Bensoussan and Temam \cite{BT}, and Viot \cite{Vi}, there was an active growth in the area with notable contributions by Flandoli and Gatarek \cite{FG}, Flandoli and Maslowski \cite{FM}, Debussche and Da Prato \cite{DD}, Menaldi and Sritharan \cite{MS},  Mattingly \cite{Ma}, R\"ockner and Zhang \cite{RZ} and Sritharan and Sundar \cite{SS}, to name a few. The references in the articles by Flandoli  as well as Albeverio \cite{SPDE in hydrodynamic} would provide a more complete list of research work on stochastic Navier-Stokes equations. 

A {\it novelty} of this paper consists in the introduction of a right continuous  Markov chain $\{\mathfrak{r}(t): t\in\mathbb{R}^{+}\}$ in order to allow for transitions in the type of random forces that perturb the Navier-Stokes equation. The equation under study appears as
\begin{align}\label{N-S M original}
&\mathbf{du}(t)+[\nu\mathbf{Au}(t)+\mathbf{B}(\mathbf{u}(t))]dt
\\&=\mathbf{f}(t)dt+\sigma(t, \mathbf{u}(t),\mathfrak{r}(t))dW(t)
+\int_{Z}\mathbf{G}(t, \mathbf{u}(t-),\mathfrak{r}(t-), z)\tilde{N_{1}}(dz, dt)\nonumber
\end{align}
with initial condition ${\bf u}(0)={\bf u}_{0}$ in a specified space. The Markov chain is assumed to be independent of the Wiener process and the Poisson random measure, and it brings transitions between smooth (e.g., laminar) and turbulent flows into stochastic Navier-Stokes equations. We shall call such equations as \emph{stochastic Navier-Stokes equations with Markov switching}. 

The objective of this article is to construct a weak solution (in the sense of stochastic analysis and partial differential equations) to equation \eqref{N-S M original}, and it is achieved by the following steps. First, we regularize the nonlinear term in \eqref{eq} and solve the regularized equation: for each $\epsilon>0$,
\begin{align}
\partial_{t}{\bf u}-\nu\Delta{\bf u}+((k_{\epsilon}{\bf u})\cdot\nabla){\bf u}-\nabla p={\bf f}(t),\label{r eq}
\end{align}
the operator $k_{\epsilon}$ is a mollification operator (see equation \eqref{k_{epsilon}} below). The abstract evolution form of equation \eqref{r eq} is
\begin{align}\label{eq det}
\mathbf{du}(t)+[\nu\mathbf{Au}(t)+\mathbf{B}_{k_{\epsilon}}(\mathbf{u}(t))]dt
=\mathbf{f}(t)dt,
\end{align}
where the operator ${\bf B}_{k_{\epsilon}}$ will be introduced in Section \ref{pre}. Thus, its stochastic analog with Markov switching is given by
\begin{align}\label{N-S M}
&\mathbf{du}(t)+[\nu\mathbf{Au}(t)+\mathbf{B}_{k_{\epsilon}}(\mathbf{u}(t))]dt
\\&=\mathbf{f}(t)dt+\sigma(t, \mathbf{u}(t),\mathfrak{r}(t))dW(t)
+\int_{Z}\mathbf{G}(t, \mathbf{u}(t-),\mathfrak{r}(t-), z)\tilde{N_{1}}(dz, dt)\nonumber
\end{align}
with initial condition ${\bf u}(0)={\bf u}_{0}$ in a specified space. Our first objective is to show that equation \eqref{N-S M} admits a unique strong solution (in the sense of stochastic analysis) under suitable growth and Lipschitz conditions on the noise coefficients (listed later as Hypotheses {\bf H}):
\begin{theorem}\label{existence and uniqueness of N-S M}
Assume that $\mathbb{E}|{\bf u}(0)|^{3}<\infty$ and ${\bf f}\in L^{3}(0, T; V')$. Then under Hypotheses $\bf H$, there exists a unique strong solution to the stochastic system \eqref{N-S M} for each fixed $\epsilon > 0$.
\end{theorem}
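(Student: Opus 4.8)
The plan is to realize the solution as the pair $(\mathbf{u},\mathfrak{r})$ and to follow the Yamada--Watanabe programme: first establish the existence of a solution to the associated martingale problem (a weak solution in the probabilistic sense), then prove pathwise uniqueness, and finally invoke the Yamada--Watanabe theorem in its jump-diffusion formulation to upgrade weak existence plus pathwise uniqueness to the existence of a \emph{unique strong solution}. Since $\mathfrak{r}$ is an exogenously prescribed finite-state Markov chain, independent of $W$ and $N_{1}$, with almost surely finitely many jumps on $[0,T]$, its law is completely determined by its $Q$-matrix; accordingly the martingale problem is posed for the generator of the joint process on the state space $H\times\mathcal{S}$, where $\mathcal{S}$ is the finite state space of the chain, and the chain component is automatically tight.

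For existence I would use a Galerkin scheme: project \eqref{N-S M} onto the span of the first $n$ eigenfunctions of $\mathbf{A}$ to obtain a finite-dimensional stochastic differential equation with jumps and switching. Because $\mathbf{B}_{k_{\epsilon}}$ is locally Lipschitz on $H$ (the mollifier $k_{\epsilon}$ maps $H$ boundedly into $L^{\infty}$) and the noise coefficients obey the growth bounds of Hypotheses $\mathbf{H}$, each truncated system has a unique global solution $\mathbf{u}_{n}$. Applying It\^o's formula to $|\mathbf{u}_{n}|^{2}$ and exploiting the cancellation $\langle\mathbf{B}_{k_{\epsilon}}(\mathbf{u}_{n}),\mathbf{u}_{n}\rangle=0$, together with the Burkholder--Davis--Gundy inequality for the Wiener and Poisson martingale parts, yields the energy estimate $\mathbb{E}\sup_{t\le T}|\mathbf{u}_{n}(t)|^{2}+\nu\,\mathbb{E}\int_{0}^{T}\|\mathbf{u}_{n}(t)\|^{2}\,dt\le C$; running the same computation on $|\mathbf{u}_{n}|^{3}$ produces the third-order bound compatible with the hypotheses $\mathbb{E}|\mathbf{u}(0)|^{3}<\infty$ and $\mathbf{f}\in L^{3}(0,T;V')$. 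These uniform bounds, the compact embedding $V\hookrightarrow H\hookrightarrow V'$, and an Aldous-type tightness criterion give tightness of the laws of $(\mathbf{u}_{n},\mathfrak{r})$ in the Skorokhod space $D([0,T];V')$ and in $L^{2}(0,T;V)$ with its weak topology. Passing to a subsequence and using the Skorokhod representation, I would then identify the limit as a solution of the martingale problem.

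Pathwise uniqueness is where the regularization pays off. Given two solutions $\mathbf{u}_{1},\mathbf{u}_{2}$ built on the same stochastic basis, driven by the same $W$, $N_{1}$ and $\mathfrak{r}$ and with the same initial datum, set $\mathbf{w}=\mathbf{u}_{1}-\mathbf{u}_{2}$ and apply It\^o's formula to $|\mathbf{w}|^{2}$. Since both solutions switch simultaneously, the differences of the noise coefficients involve only the Lipschitz dependence on the velocity, so the Wiener and Poisson contributions are controlled by the Lipschitz bounds in Hypotheses $\mathbf{H}$. For the nonlinear term, bilinearity gives $\mathbf{B}_{k_{\epsilon}}(\mathbf{u}_{1})-\mathbf{B}_{k_{\epsilon}}(\mathbf{u}_{2})=B(k_{\epsilon}\mathbf{w},\mathbf{u}_{1})+B(k_{\epsilon}\mathbf{u}_{2},\mathbf{w})$; pairing with $\mathbf{w}$ kills the second term by antisymmetry, while the first is estimated by $\|k_{\epsilon}\mathbf{w}\|_{L^{\infty}}\|\mathbf{u}_{1}\|\,|\mathbf{w}|\le C_{\epsilon}\|\mathbf{u}_{1}\|\,|\mathbf{w}|^{2}$, with no derivative falling on $\mathbf{w}$. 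Because $\int_{0}^{T}\|\mathbf{u}_{1}(t)\|^{2}\,dt<\infty$ almost surely, a stochastic Gronwall argument then forces $\mathbf{w}\equiv 0$.

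The main obstacle is the existence step rather than uniqueness: with the mollified nonlinearity the uniqueness estimate closes without having to absorb a $\|\mathbf{w}\|^{2}$ term, but constructing the martingale solution requires care in the presence of the compensated Poisson integral and the switching. The delicate point is to show, after the Skorokhod representation, that along the limit the predictable quadratic variation of the continuous martingale part reproduces $\sigma\sigma^{*}$, that the jump measure carries the prescribed compensator encoding $\mathbf{G}$ and $\nu_{1}$, and that the switching generator survives the passage to the limit---so that the limit genuinely solves \eqref{N-S M} with the correct joint law of $(\mathbf{u},\mathfrak{r})$. Once this identification is complete, combining it with pathwise uniqueness through the Yamada--Watanabe theorem delivers the unique strong solution.
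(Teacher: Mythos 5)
Your proposal is correct and follows essentially the same route as the paper: Galerkin approximation with $L^{2}$/$L^{3}$ energy estimates, tightness via Aldous' criterion, weak existence through the martingale problem, pathwise uniqueness exploiting the cancellation $b(k_{\epsilon}\mathbf{v},\mathbf{w},\mathbf{w})=0$ together with a mollifier estimate on $b(k_{\epsilon}\mathbf{w},\mathbf{u},\mathbf{w})$, and Yamada--Watanabe to upgrade to a unique strong solution. The only deviations are at the level of implementation: the paper identifies the limit law \emph{without} Skorokhod representation (which, on the non-metrizable Lusin path space $(\Omega^{\ast},\tau)$, would require Jakubowski's variant), instead proving the pointwise convergence $M^{\phi}_{n}\to M^{\phi}$, the $\tau^{\dagger}$-continuity of $M^{\phi}$, and the uniform moment bound $\sup_{n}\mathbb{E}^{\mu_{n}}\big[|M^{\phi}|^{1+\delta}\big]\leq C$ --- the one place where $\mathbb{E}|\mathbf{u}_{0}|^{3}<\infty$ and $\mathbf{f}\in L^{3}(0,T;V')$ are genuinely needed --- while in the uniqueness step it estimates $|b(k_{\epsilon}\mathbf{w},\mathbf{u},\mathbf{w})|\leq\|\mathbf{w}\|\,|\mathbf{w}|\,\|\mathbf{u}\|$, absorbs $\nu\|\mathbf{w}\|^{2}$ by Young's inequality, and makes your ``stochastic Gronwall argument'' rigorous through the exponential weight $e^{-\rho(t)}$ with $\rho(t)=\frac{1}{4\nu}\int_{0}^{t}\|\mathbf{u}(s)\|^{2}\,ds$, which handles the random, merely a.s.\ time-integrable coefficient that your $L^{\infty}$-mollifier bound also produces.
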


Let   $\bf u^{\epsilon}$ denote the solution to the regularized equation \eqref{N-S M}. The next step is to show that there exists a sequence from the family  $\{{\bf u}^{\epsilon}\}_{\epsilon>0}$ which  converges weakly to a limit as $\epsilon \to 0$. Let the limit be denoted by  $\bf u$. This sets up the stage  to identify the limit, $\bf u$, as a solution of the stochastic Navier-Stokes equation with Markov switching \eqref{N-S M original} without regularization. The idea of using a regularization for the nonlinear term in the Navier-Stokes system goes back to Leray (see, e.g., \cite{Leray, O-P}). In its stochastic context, we have
\begin{theorem}\label{THE main theorem}
Assume that $\mathbb{E}|{\bf u}(0)|^{3}<\infty$ and ${\bf f}\in L^{3}(0, T; V')$. Then under Hypotheses $\bf H$, there exists a weak solution $\bf u$ to the three-dimensional stochastic Navier-Stokes equation with Markov switching \eqref{N-S M original}.
\end{theorem}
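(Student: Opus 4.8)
The plan is to realize the desired weak solution as a limit point of the family of regularized strong solutions $\{\mathbf{u}^{\epsilon}\}_{\epsilon>0}$ furnished by Theorem~\ref{existence and uniqueness of N-S M}. The first task is to derive \emph{a priori} bounds uniform in $\epsilon$. Applying the It\^o formula for jump processes to $|\mathbf{u}^{\epsilon}(t)|^{2}$ and to $|\mathbf{u}^{\epsilon}(t)|^{3}$, and using the key cancellation $\langle\mathbf{B}_{k_{\epsilon}}(\mathbf{u}),\mathbf{u}\rangle=0$ for the regularized nonlinearity together with the growth parts of Hypotheses~$\mathbf{H}$ on $\sigma$ and $\mathbf{G}$, I would obtain a bound of the form
\begin{align*}
\sup_{\epsilon>0}\,\mathbb{E}\Big[\sup_{t\in[0,T]}|\mathbf{u}^{\epsilon}(t)|^{2}+\nu\int_{0}^{T}\|\mathbf{u}^{\epsilon}(s)\|^{2}\,ds\Big]<\infty,
\end{align*}
along with the $L^{3}$-analogue that exploits $\mathbb{E}|\mathbf{u}(0)|^{3}<\infty$. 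A Burkholder--Davis--Gundy estimate controls the two martingale terms, and $\mathbf{f}\in L^{3}(0,T;V')$ handles the forcing. These yield uniform bounds for the laws of $\mathbf{u}^{\epsilon}$ in $L^{2}(\Omega;L^{\infty}(0,T;H))\cap L^{2}(\Omega;L^{2}(0,T;V))$.

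The second task is tightness. Because the driving noise contains a Poisson random measure, the sample paths are only c\`adl\`ag, so the natural path space is the Skorokhod space $D([0,T];H)$, intersected with the weak-$L^{2}(0,T;V)$ topology. I would split $\mathbf{u}^{\epsilon}$ into the drift part, of bounded variation and hence well behaved after the compact embedding $V\hookrightarrow\hookrightarrow H$, and the two stochastic-integral parts, and then verify the Aldous tightness criterion: for stopping times $\tau_{\epsilon}\le\tau_{\epsilon}+\theta\le T$, control $\mathbb{E}|\mathbf{u}^{\epsilon}(\tau_{\epsilon}+\theta)-\mathbf{u}^{\epsilon}(\tau_{\epsilon})|_{V'}$ by a modulus vanishing with $\theta$, uniformly in $\epsilon$. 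Combined with the uniform energy bound and the compactness of $V\hookrightarrow H$, this produces tightness of the joint laws of $(\mathbf{u}^{\epsilon},W,N_{1},\mathfrak{r})$ on the product path space.

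By Prokhorov's theorem a subsequence converges weakly, and by the Jakubowski--Skorokhod representation theorem I would transfer this to a new stochastic basis $(\bar\Omega,\bar{\mathcal{F}},\bar{\mathbb{P}})$ carrying processes $\bar{\mathbf{u}}^{\epsilon}\to\bar{\mathbf{u}}$ almost surely in $D([0,T];H)$ and weakly in $L^{2}(0,T;V)$, together with limiting driving data $\bar W$, $\bar N_{1}$, $\bar{\mathfrak{r}}$ having the prescribed laws. The final and hardest task is to identify $\bar{\mathbf{u}}$ as a weak solution of \eqref{N-S M original}. I would formulate the problem through the associated martingale problem, showing that for every test function $\phi\in V$ the process
\begin{align*}
M_{\phi}(t)=\big(\bar{\mathbf{u}}(t)-\bar{\mathbf{u}}(0),\phi\big)+\int_{0}^{t}\langle\nu\mathbf{A}\bar{\mathbf{u}}(s)+\mathbf{B}(\bar{\mathbf{u}}(s))-\mathbf{f}(s),\phi\rangle\,ds
\end{align*}
is a c\`adl\`ag martingale whose quadratic variation and predictable jump intensity are dictated by $\sigma$ and $\mathbf{G}$; a representation theorem for martingales driven by a Wiener process and a Poisson random measure then exhibits $\bar{\mathbf{u}}$ as the sought solution. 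The main obstacle is the nonlinear term: passing $\mathbf{B}_{k_{\epsilon}}(\bar{\mathbf{u}}^{\epsilon})\to\mathbf{B}(\bar{\mathbf{u}})$ to the limit requires upgrading weak convergence to strong convergence in $L^{2}(\bar\Omega\times[0,T];H)$, using $k_{\epsilon}\to\mathrm{id}$ and the equicontinuity of the mollifiers, while simultaneously controlling the convergence of both stochastic integrals via the bounds above and the Lipschitz/growth hypotheses~$\mathbf{H}$. Lower semicontinuity of the energy norm under weak limits preserves the \emph{a priori} estimates for $\bar{\mathbf{u}}$. I expect this simultaneous nonlinear-term identification and stochastic-integral passage to be the crux, since in three dimensions no pathwise uniqueness is available and only a probabilistically weak (martingale) solution can be obtained.
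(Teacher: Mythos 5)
Your proposal is sound in outline, but it follows a genuinely different route from the paper in the identification step. The paper never leaves the original probability space: it keeps the Markov chain, Wiener process and Poisson random measure fixed, places the laws $\mu_{\epsilon}$ of $(\mathbf{u}^{\epsilon},\mathfrak{r})$ on the Lusin path space $\Omega^{\dagger}=\Omega^{\ast}\times\mathcal{D}([0,T];\mathcal{S})$, and passes to the limit \emph{inside the martingale problem}: (i) tightness of $\{\mu_{\epsilon}\}$ (Aldous in $\mathcal{D}([0,T];V')$ plus the Metivier/Aubin--Lions lemma to get strong $L^{2}(0,T;H)$ compactness), (ii) pointwise convergence of the martingale functionals $\mathbf{M}^{\phi}_{\epsilon}\to\mathbf{M}^{\phi}$ on path space, where the nonlinear term is handled by the antisymmetry $b(k_{\epsilon}u,u,\rho)=-b(k_{\epsilon}u,\rho,u)$ and mollifier convergence against $\nabla\rho\in L^{\infty}$, and (iii) convergence of expectations $\mathbb{E}^{\mu_{\epsilon}}\mathbf{M}^{\phi}\to\mathbb{E}^{\mu}\mathbf{M}^{\phi}$, which rests on continuity of $\mathbf{M}^{\phi}$ in the path topology and a uniform $(1+\delta)$-moment bound ($\delta\le 1/11$) -- this is exactly where $\mathbb{E}|\mathbf{u}_{0}|^{3}<\infty$ and $\mathbf{f}\in L^{3}(0,T;V')$ enter. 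No Skorokhod representation and no martingale representation theorem are needed; the test functions $\phi(\langle u,\rho\rangle_{V},i)$ encode the switching through the generator term $\sum_{j}\gamma_{ij}\phi(\cdot,j)$. Your route instead uses Prokhorov plus Jakubowski--Skorokhod to transfer to a new basis with almost-sure convergence, identifies the limit equation pathwise, and then reconstructs the driving Wiener process and Poisson random measure via a representation theorem. What your approach buys is transparency: almost-sure convergence makes the passage $\mathbf{B}_{k_{\epsilon}}(\bar{\mathbf{u}}^{\epsilon})\to\mathbf{B}(\bar{\mathbf{u}})$ and the convergence of the stochastic integrals concrete. What it costs is the machinery you must import (Jakubowski's version of Skorokhod representation, since the weak-$L^{2}(0,T;V)$ topology is not Polish; infinite-dimensional martingale representation with jumps; and the compatibility/adaptedness of $\bar{\mathfrak{r}}$ as a Markov chain with generator $\Gamma$ independent of the reconstructed noises on the new space, which you assert but do not argue). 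One imprecision to fix: your Aldous estimate controls increments in the $V'$-norm, which yields tightness in $\mathcal{D}([0,T];V')$, not in $D([0,T];H)$ as you state; the upgrade to strong $L^{2}(0,T;H)$ compactness requires precisely the Aubin--Lions-type interpolation (boundedness in $L^{2}(0,T;V)$ plus relative compactness in $\mathcal{D}([0,T];V')$) that the paper isolates as Lemma \ref{Metivier cpt}, so you should route your compactness argument through that rather than claiming Skorokhod-space tightness in $H$ directly.
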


Ergodic behavior of the solution of the regularized stochastic Navier-Stokes system with Markov switching and its limit (along a sequence) is being prepared by us as a separate article. Coupling of the stochastic Navier-Stokes sytem and the Markov switching to reflect onset of turbulence is currently under study.

The present article is organized as follows. The background results and the functional analytic setup for the Navier-Stokes system are introduced in Section \ref{pre}. A priori estimates appear in Section \ref{a priori estimates}.  Section \ref{regularized} is devoted to the proof of Theorem \ref{existence and uniqueness of N-S M}. In section \ref{original},  the proof of Theorem \ref{THE main theorem} is presented.

\section{Preliminaries and Functional Analytic Setup}\label{pre}

\subsection{Basic Results on Convolution}
First, we recall some properties on convolution in order to explain regularization. The interested reader may consult, e.g., \cite[App. C.5.]{Evans} for more details. If $U\subset\mathbb{R}^{3}$ is open and $\epsilon>0$, we write
$
U_{\epsilon}:=\{x\in U: \text{dist}(x, \partial U)>\epsilon\}.
$
Define the function $\eta\in C^{\infty}(\mathbb{R}^{3})$ by
\begin{align*}
\eta(x):=
\begin{cases}
C\exp\Big(\frac{1}{|x|^{2}-1}\Big)& \mbox{if}\quad |x|<1\\
0 & \mbox{if}\quad |x|\geq 1,
\end{cases}
\end{align*}
where the constant $C>0$ is selected so that $\int_{\mathbb{R}^{3}}\eta dx=1$. For each $\epsilon>0$, set
$
\eta_{\epsilon}(x):=\frac{1}{\epsilon^{3}}\eta\Big(\frac{x}{\epsilon}\Big).
$
We call $\eta$ the standard mollifier; the function $\eta_{\epsilon}$ is a smooth function on $\mathbb{R}^{3}$ with support in $B(0, \epsilon)$ and satisfy
$
\int_{\mathbb{R}^{3}}\eta_{\epsilon}dx=1.
$
If $f: U\rightarrow\mathbb{R}$ is locally integrable, define the mollification operator by 
\begin{align}\label{k_{epsilon}}
k_{\epsilon}f:=\eta_{\epsilon}\ast f \quad\mbox{in} \quad U_{\epsilon},
\end{align}
i.e., 
$
k_{\epsilon}f=\int_{U}\eta_{\epsilon}(x-y)f(y)dy=\int_{B(0, \epsilon)}\eta(y)f(x-y)dy
$
for $x\in U_{\epsilon}$.
The next lemma collects some properties of the mollification operator. The interested reader may consult, e.g., \cite[Thm. 7 in App. C.5]{Evans} or \cite[Lem. 6.3]{O-P} for details.
\begin{lemma}\label{convolution}
The mollification operator enjoys the following properties:
\begin{enumerate}
\item $k_{\epsilon}f\in C^{\infty}(U_{\epsilon})$.
\item If $1\leq p<\infty$ and $f\in L^{P}_{loc}(U)$, then $k_{\epsilon}f\rightarrow f$ in $L^{p}_{loc}(U)$.
\item If $1\leq p<\infty$ and $f\in L^{p}_{loc}(U)$, then $\|k_{\epsilon}f\|_{L^{p}_{loc}(U)}\leq \|f\|_{L^{p}_{loc}(U)}$.
\end{enumerate}
\end{lemma}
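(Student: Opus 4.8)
The three claims are the classical properties of an approximate identity, and the plan is to prove them in the order (1), (3), (2), since the convergence statement (2) will rely on the contraction estimate (3). Assertion (1) I would obtain by differentiation under the integral sign. Fixing $x\in U_{\epsilon}$ and a coordinate direction $e_{i}$, I would examine the difference quotient
\[
\frac{k_{\epsilon}f(x+he_{i})-k_{\epsilon}f(x)}{h}=\int_{U}\frac{\eta_{\epsilon}(x+he_{i}-y)-\eta_{\epsilon}(x-y)}{h}\,f(y)\,dy.
\]
For all small $h$ the integrand is supported in a fixed compact subset of $U$ on which $f$ is integrable, and since $\eta_{\epsilon}\in C^{\infty}$ has compact support its difference quotients converge uniformly to $\partial_{x_{i}}\eta_{\epsilon}$. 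Dominated convergence then lets me pass the limit under the integral, giving $\partial_{x_{i}}k_{\epsilon}f=(\partial_{x_{i}}\eta_{\epsilon})\ast f$; iterating over all multi-indices shows $k_{\epsilon}f\in C^{\infty}(U_{\epsilon})$, each derivative being again a convolution of $f$ against a smooth compactly supported kernel.

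For (3) I would use that $\eta_{\epsilon}\geq 0$ with $\int\eta_{\epsilon}\,dx=1$, so that $\eta_{\epsilon}(y)\,dy$ is a probability measure. Applying Jensen's inequality to the convex map $t\mapsto|t|^{p}$ yields the pointwise bound
\[
|k_{\epsilon}f(x)|^{p}\leq\int_{B(0,\epsilon)}\eta_{\epsilon}(y)\,|f(x-y)|^{p}\,dy.
\]
Fixing a compact $V\Subset U$ and $\epsilon$ small enough that $V':=\overline{V+B(0,\epsilon)}\Subset U$, I would integrate this over $V$, interchange the integrals by Tonelli (the integrand being nonnegative), and invoke translation invariance of Lebesgue measure together with $\int\eta_{\epsilon}=1$ to conclude $\|k_{\epsilon}f\|_{L^{p}(V)}\leq\|f\|_{L^{p}(V')}$, which is the asserted local contraction.

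Assertion (2) I would settle by the standard two-stage approximate-identity argument. First, for continuous $g$ the identity $k_{\epsilon}g(x)-g(x)=\int_{B(0,\epsilon)}\eta_{\epsilon}(y)[g(x-y)-g(x)]\,dy$ combined with uniform continuity of $g$ on a compact neighborhood gives $k_{\epsilon}g\to g$ uniformly, hence in $L^{p}(V)$, on any $V\Subset U$. Then, given $f\in L^{p}_{loc}(U)$ and a fixed $V\Subset U$, I would pick a continuous $g$ with $\|f-g\|_{L^{p}(V')}$ small and use the triangle inequality
\[
\|k_{\epsilon}f-f\|_{L^{p}(V)}\leq\|k_{\epsilon}(f-g)\|_{L^{p}(V)}+\|k_{\epsilon}g-g\|_{L^{p}(V)}+\|g-f\|_{L^{p}(V)}.
\]
The contraction (3) controls the first term by $\|f-g\|_{L^{p}(V')}$, the third term is small by the choice of $g$, and the middle term vanishes as $\epsilon\to0$ by the continuous case; letting $\epsilon\to0$ and then refining the approximation proves convergence in $L^{p}_{loc}(U)$.

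None of these ingredients is deep --- differentiation under the integral, Jensen together with Tonelli, and a density argument --- so the hard part will be purely the bookkeeping of domains. Because $k_{\epsilon}f$ is defined only on $U_{\epsilon}$ while the convolution samples $f$ on the enlarged set $V+B(0,\epsilon)$, I must consistently estimate $f$ on a slightly larger compact set $V'$ and keep $\epsilon$ small enough that $V'$ stays inside $U$; this local-versus-global bookkeeping is the only place where an otherwise routine argument could go wrong.
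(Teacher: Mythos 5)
Your proof is correct, and it is essentially the proof the paper relies on: the paper does not prove this lemma itself but cites Evans \cite[Thm.\ 7 in App.\ C.5]{Evans}, and your three steps --- differentiation under the integral sign for smoothness, the contraction estimate $\|k_{\epsilon}f\|_{L^{p}(V)}\leq\|f\|_{L^{p}(V')}$ obtained by exploiting that $\eta_{\epsilon}(y)\,dy$ is a probability measure together with Tonelli, and the two-stage density argument via continuous functions for the $L^{p}_{loc}$ convergence --- are exactly the classical argument given there, with the domain bookkeeping ($V\Subset U$, $V'=\overline{V+B(0,\epsilon)}\Subset U$ for small $\epsilon$) handled correctly. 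The only cosmetic difference is that Evans derives the pointwise bound $|k_{\epsilon}f(x)|^{p}\leq\int\eta_{\epsilon}(y)|f(x-y)|^{p}\,dy$ by H\"older's inequality applied to $\eta_{\epsilon}^{(p-1)/p}\cdot\eta_{\epsilon}^{1/p}|f|$, whereas you use Jensen's inequality; the two are interchangeable.
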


\subsection{Function Space and Operators}
Let $\mathcal{D}(G)$ be the space of $C^{\infty}$-functions with compact support contained in $G$ and $\mathcal{V}\colonequals\{{\bf u}\in\mathcal{D}(G): \nabla\cdot{\bf u}=0\}$. Let $H$ and $V$ be the completion of $\mathcal{V}$ in $L^{2}(G)$ and $W^{1, 2}_{0}(G)$, respectively. Then it can be shown that (see, e.g., \cite[Sec. 1.4, Ch. I]{Temam})
\begin{align*}
H&=\{{\bf u}\in L^{2}(G): \nabla\cdot{\bf u}=0,\ {\bf u}\cdot{\bf n}\big|_{\partial G}=0\},\\
V&=\{{\bf u}\in W^{1, 2}_{0}(G):  \nabla\cdot{\bf u}=0\},
\end{align*}
and we denote the $H$-norm  ($V$-norm, resp.) by $|\cdot|$ ($\|\cdot\|$, resp.) and the inner product on $H$ (on $V$, resp.) by $(\cdot, \cdot)$  
($((\cdot, \cdot))$, resp.). The duality pairing between $V^{\prime}$ and $V$  is denoted by $\langle\cdot,\cdot\rangle_{V}$, or simply by $\langle\cdot,\cdot\rangle$ when there is no ambiguity. In addition, we have the following inclusion between the spaces:
$
V\hookrightarrow H\hookrightarrow V',
$
and both of the inclusions $V\hookrightarrow H$ and $H\hookrightarrow V'$ are dense,  compact embeddings ( see, e.g., \cite[Lem. 1.5.1 and 1.5.2, Ch. II]{sohr}).

Let $\bf A: V\rightarrow V'$ be the Stokes operator. The set $\{e_{i}\}_{i=1}^{\infty}$ is reserved for the orthonormal basis in $H$ (orthogonal in $V$) that consists of the eigenvector of  the Stokes operator $\bf A$ (see, e.g., \cite[Thm. IV. 5.5]{BF}), and $H_n\colonequals$ span$\{e_i\}_{i=1}^{n}$; $\Pi_{n}$ is the orthogonal projection of $H$ on $H_n$. In addition, for all ${\bf u\in\mathcal{D}({\bf A})}$, one has (see, e.g., \cite[Sec. 6, Ch. II]{FMRT})
\begin{align}\label{property of Stokes operator}
\langle{\bf Au}, {\bf u}\rangle_{V}=\|{\bf u}\|.
\end{align}
Define 
$b(\cdot,\cdot,\cdot): V\times V\times V\rightarrow\mathbb{R}$ by
\begin{align*}
b({\bf u},{\bf v},{\bf w}):=\sum_{i,j=1}^{3}\int_{G}{u}_i\frac{\partial v_j}{\partial x_i}w_j dx.
\end{align*}
Then $b$ is a trilinear form which induces a bilinear form ${\bf B}({\bf u,v})$ by $b({\bf u},{\bf v,w})=\langle {\bf B}{\bf (u,v),w}\rangle_{V}$. In addition, $b$ enjoys the following properties (see, e.g., \cite[Lem. 1.3, Sec. 1, Ch. II]{Temam}):
\begin{align}
b({\bf u}, {\bf v}, {\bf v})&=0\label{vanishing of b},\\
b({\bf u}, {\bf v}, {\bf w})&=-b({\bf u, w, v}).\label{negative of b}
\end{align}

For each $\epsilon>0$, define  
$b(k_{\epsilon}\cdot,\cdot,\cdot): V\times V\times V\rightarrow\mathbb{R}$ by
\begin{align}\label{def of b_{k}}
b(k_{\epsilon}{\bf u},{\bf v},{\bf w}):=\sum_{i,j=1}^{3}\int_{G}{(\eta_{\epsilon}\ast u)}_i\frac{\partial v_j}{\partial x_i}w_j dx,
\end{align}
which induces a bilinear form ${\bf B}_{k_{\epsilon}}({\bf u,v})$ by 
$b(k_{\epsilon}{\bf u},{\bf v,w})=\langle {\bf B}_{k_{\epsilon}}{\bf (u,v),w}\rangle_{V}.$ 
The regularization rises the regularity of the first component in $b$, therefore, one may employ the  (generalized) H\"older inequality and the Young convolution inequality to deduce 
\begin{align}
|b(k_{\epsilon}{\bf u}, {\bf v}, {\bf w})|
\leq \|\eta_{\epsilon}\ast {\bf u}\|_{6}\|\nabla {\bf v}\|_{2}\|{\bf w}\|_{3}
\leq C'_{\epsilon}\|{\bf u}\|_{3}\|\nabla {\bf v}\|_{2}\|{\bf w}\|_{3}\label{ineq of b},
\end{align}
where $C'_{\epsilon}=\|\eta_{\epsilon}\|_{\frac{6}{5}}$. This together with  Sobolev embedding and interpolation inequalities further implies
\begin{align}
|b(k_{\epsilon}{\bf u,v,w})|
\leq C_{\epsilon}\|{\bf u}\|^{\frac{1}{2}}|{\bf u}|^{\frac{1}{2}}\|{\bf v}\| \|{\bf w}\|^{\frac{1}{2}}|{\bf w}|^{\frac{1}{2}}\label{b_{k}uvw}.
\end{align}
In particular, when ${\bf u=w}$, we have
\begin{align}
|b(k_{\epsilon}{\bf u,v,u})|\leq C_{\epsilon}\|{\bf u}\|\cdot|{\bf u}|\cdot\|{\bf v}\|.\label{b_{k} uvu}
\end{align}
As shall be seen later, we first work with a fixed $\epsilon$. Therefore, we shall assume that $C_{\epsilon}=1$ for the sake of simplicity.

\subsection{Noise Terms}
\begin{enumerate}
\item[(i)] Let $Q\in\mathcal{L}(H)$ be a nonnegative, symmetric, trace-class operator. Define $H_{0}\colonequals Q^{\frac{1}{2}}(H)$ with the inner product given by 
$
(u, v)_{0}:=(Q^{-\frac{1}{2}}u, Q^{-\frac{1}{2}}v)_{H}
$
for $u, v\in H_{0}$, where $Q^{-\frac{1}{2}}$ is the inverse of $Q$.
Then it follows from \cite[Prop. C.0.3 (i)]{concise} that $(H_{0}, (\cdot, \cdot)_{0})$ is again a separable Hilbert space. 
Let $\mathcal{L}_{2}(H_{0}, H)$ denote the separable Hilbert space of the Hilbert-Schmidt operators from $H_{0}$ to $H$. Then it can be shown that (see, e.g., \cite[p. 27]{concise})
$
\|L\|_{\mathcal{L}_{2}(H_{0}, H)}=\|L\circ Q^{\frac{1}{2}}\|_{\mathcal{L}_{2}(H, H)}
$
for each $L\in \mathcal{L}_{2}(H_{0}, H)$. Moreover, we write 
$
\|L\|_{L_{Q}}=\|L\|_{\mathcal{L}_{2}(H_{0}, H)}
$
for simplicity.

Let $T>0$ be a fixed real number and $(\Omega, \mathcal{F}, \{\mathcal{F}_{t}\}_{0 le t\le T}, \mathcal{P})$ be a filtered probability space.Let $W$ be an $H$-valued Wiener process with covariance $Q$.

Let $\sigma: [0, T]\times\Omega\rightarrow\mathcal{L}_{2}(H_{0}, H)$ be jointly measurable and adapted. If  we have $\mathbb{E}\int^{T}_{0}\|\sigma(s)\|^{2}_{\mathcal{L}_{2}(H_{0}, H)}ds<\infty$, then for $t\in [0, T]$, the stochastic integral
$
\int^{t}_{0}\sigma(s)dW(s)
$
is well-defined and is an $H$-valued continuous square integrable martingale.

\item[(ii)] Let $({Z}, \mathcal{B}({ Z}))$ be a measurable space, ${\bf M}$ be the collection of all of nonnegative integer-valued measures on $({Z}, \mathcal{B}({ Z}))$, and $\mathcal{B}({\bf M})$ be the smallest $\sigma$-field on ${\bf M}$ with respect to which all
$
\eta\mapsto \eta(B)
$
are measurable, where $\eta\in{\bf M}$, $\eta(B)\in\mathbb{Z}^{+}\cup\{\infty\}$, and $B\in\mathcal{B}({ Z})$. Let
$N: \Omega\rightarrow {\bf M}$ be a Poisson random measure with intensity measure $\nu$. 

For a Poisson random measure $N(dz, ds)$,  $\tilde{N}(dz, ds)\colonequals N(dz, ds)-\nu(dz)ds$ defines its compensation. Then it can be shown that (see, e.g, \cite[Sec. 3, Ch II.]{I-W}) $\tilde{N}(dz, ds)$ is a square integrable martingale, and for predictable $f$ such that
\begin{align*}
&\mathbb{E}\int^{t+}_{0}\int_{Z}|f(\cdot, z, s)|\nu(dz)ds<\infty,\,\,\text{then}\\
&\int^{t+}_{0}\int_{Z}f(\cdot, z, s)\tilde{N}(dz, ds)
\\&=\int^{t+}_{0}\int_{Z}f(\cdot, z, s)N(dz, ds)-\int^{t}_{0}\int_{Z}f(\cdot, z, s)\nu(dz)ds
\end{align*}
 is a well-defined $\mathcal{F}_{t}$-martingale.

\item[(iii)] Let $m\in\mathbb{N}$. Let $\{\mathfrak{r}(t): t\in\mathbb{R}^{+}\}$ be a right continuous Markov chain with generator $\Gamma=(\gamma_{ij})_{m\times m}$ taking values in $\mathcal{S}:=\{1, 2, 3, .....m\}$ such that
\begin{align*}
\mathcal{R}_{t}(i, j)&=\mathcal{R}(\mathfrak{r}(t+h)=j|\mathfrak{r}(t)=i )
\\&=\left\{\begin{array}{rcl}
\gamma_{ij}h+o(h)&\mbox{if}& i\neq j,\\
1+\gamma_{ii}h+o(h)&\mbox{if}&i= j,\\
\text{and}\,\,\gamma_{ii}&=-\sum_{i\neq j}\gamma_{ij}.
\end{array}\right.
\end{align*}

In addition, $\mathfrak{r}(t)$ admits the following stochastic integral representation  (see, e.g, \cite[Sec. 2.1, Ch. 2]{Skorohod}):
Let $\Delta_{ij}$ be consecutive, left closed, right open intervals of the real line each having length $\gamma_{ij}$ such that
\begin{align*}
\Delta_{12}&=[0, \gamma_{12}),\ 
\Delta_{13}=[\gamma_{12}, \gamma_{12}+\gamma_{13}),\cdots\\
\Delta_{1m}&=\Big[\sum_{j=2}^{m-1}\gamma_{1j}, \sum_{j=2}^{m}\gamma_{1j}\Big),\cdots\\
\Delta_{2m}&=\Big[\sum_{j=2}^{m}\gamma_{1j}+\sum_{j=1, j\neq 2}^{m-1}\gamma_{2j}, \sum_{j=2}^{m}\gamma_{1j}+\sum_{j=1, j\neq 2}^{m}\gamma_{2j}\Big)
\end{align*}
and so on. Define a function
$
h:\mathcal{S}\times\mathbb{R}\rightarrow\mathbb{R}
$
by
\begin{align}\label{def of h}
h( i, y)=
\begin{cases}
j-i & \mbox{if} \quad y\in\Delta_{ij},\\
0& \mbox{otherwise}.
\end{cases}
\end{align}
Then
\begin{align}\label{markov repre}
d\mathfrak{r}(t)=\int_{\mathbb{R}}h(\mathfrak{r}(t-), y)N_{2}(dt, dy),
\end{align}
with initial condition $\mathfrak{r}(0)=\mathfrak{r}_{0}$, where $N_{2}(dt, dy)$ is a Poisson random measure with intensity measure $dt\times\mathfrak{L}(dy)$, in which $\mathfrak{L}$ is the Lebesgue measure on $\mathbb{R}$.

We assume that such a Markov chain, Wiener process, and the Poisson random measure are independent.

\end{enumerate}

\subsection{Hypotheses and Stochastic System}
The noise coefficients $\sigma: [0, T]\times H\times\mathcal{S}\rightarrow \mathcal{L}_{2}(H_{0}, H)$ and ${\bf G}: [0, T]\times H\times\mathcal{S}\times Z\rightarrow H$ are assumed to satisfy the following Hypotheses $\bf H$:
\begin{enumerate}
\item [$\bf H1$.] For all $t\in [0, T]$ and all $i\in\mathcal{S}$, there exists a constant $K>0$ such that
\begin{align*}
\|\sigma(t, {\bf u}, i)\|^{p}_{L_{Q}}\leq K(1+|{\bf u}|^{p})
\end{align*}
for  $p$ equal to $2, 3$ (growth condition on $\sigma$).
\item [$\bf H2$.] For all $t\in [0, T]$, there exists a constant $L>0$ such that for all ${\bf u, v}\in H$ and $i, j\in\mathcal{S}$
\begin{align*}
\|\sigma(t, {\bf u}, i)-\sigma(t, {\bf v}, i)\|^{2}_{L_{Q}}\leq L(|{\bf u}-{\bf v}|^{2})
\end{align*}
(Lipschitz condition on $\sigma$).
\item [$\bf H3$.] For all $t\in [0, T]$ and all $i\in\mathcal{S}$, there exist a constant $K>0$ such that
\begin{align*}
\int_{Z}|{\bf G}(t, {\bf u}, i, z)|^{p}\nu(dz)\leq K(1+|{\bf u}|^{p})
\end{align*}
for all $p=1, 2$, and $3$ (growth condition on $\bf G$).
\item [$\bf H4$.] For all $t\in[0, T]$, there exists a constant $L>0$ such that for all ${\bf u, v}\in H$ and $i, j\in\mathcal{S}$, 
\begin{align*}
\int_{Z}|{\bf G}(t, {\bf u}, i, z)-{\bf G}(t, {\bf v}, i , z)|^{2}\nu(dz)\leq L(|{\bf u}-{\bf v}|^{2})
\end{align*}
 (Lipschitz condition on $\bf G$).
\end{enumerate}

The transformation from equation \eqref{eq} to \eqref{evolution B} is sketched as follows (the transformation from \eqref{r eq} to \eqref{eq det} can be achieved in a similar manner). Invoking the Helmholtz decomposition, one decomposes the space $L^{2}(G)$ into the direct sum of $H$ and its orthogonal complement, namely, $L^{2}(G)=H\oplus H^{\perp}$. Moreover, by applying the Leray projection to each term of \eqref{eq}, one may write \eqref{eq} as \eqref{evolution B}. The interested reader is referred to, e.g., \cite{Lady, sohr} for more details. 

 Let $F: [0, T]\times V\times \mathcal{S}\rightarrow\mathbb{R}^{+}$ be a continuous function with its Fr\'echet derivatives $F_{t}$, $F_{v}$, and $F_{vv}$ are bounded and continuous. Define the operator
\begin{align}\label{operator L}
&\mathcal{L}F(t, {\bf v}, i)
\\&:=F_{t}(t, {\bf v}, i)+\langle -\nu{\bf Av}-{\bf B}({\bf v})+{\bf f}(t), F_{v}(t, {\bf v}, i)\rangle_{V}\nonumber
\\& +\sum_{j=1}^{m}\gamma_{ij} F(t, {\bf v}, j)\nonumber
+\frac{1}{2}tr\Big(F_{vv}(t, {\bf v}, i)\sigma(t, {\bf v}, i)Q\sigma^{\ast}(t, {\bf v}, i)\Big)
\\&+\int_{Z}\Big(F(t, {\bf v}+{\bf G}(t, {\bf v}, i, z), i)- F(t, {\bf v}, i)\nonumber
\\&\qquad\qquad-\Big(F_{v}(t, {\bf v}, i), {\bf G}(t, {\bf v}, i, z)\Big)_{H}\Big)\nu_{1}(dz).\nonumber
\end{align}
Then we have the following change of variables formula due to It\^o (see, e.g., \cite[Lem. 3 in Sec. 2.1, Ch. 2]{Skorohod}):
\begin{align*}
&F(t, {\bf u}(t), \mathfrak{r}(t))
\\&=F(0, {\bf u}(0), \mathfrak{r}(0))+\int^{t}_{0}\mathcal{L}F(s, {\bf u}(s), \mathfrak{r}(s))ds
\\&+\int^{t}_{0}\langle F_{x}(s, {\bf u}(s), \mathfrak{r}(s)), \sigma(s, {\bf u}(s), \mathfrak{r}(s))dW(s)\rangle
\\&+\int^{t}_{0}\int_{Z}\Big(F(s, {\bf u}(s-)+{\bf G}(s, {\bf u}(s-), \mathfrak{r}(s-), z), \mathfrak{r}(s-))
\\&\qquad\qquad-F(s, {\bf u}(s-),\mathfrak{r}(s-))\Big)\tilde{N}_{1}(dz, ds)
\\&+\int^{t}_{0}\int_{\mathbb{R}}\Big(F(s, {\bf u}(s-), \mathfrak{r}(s-)+h(\mathfrak{r}(s-),y))
\\&\qquad\qquad-F(s, {\bf u}(s-),\mathfrak{r}(s-))\Big)\tilde{N}_{2}(ds,dy),
\end{align*}
where $\tilde{N}_{1}(dz, ds)$ is the compensated Poisson random measure introduced earlier; 
$
\tilde{N}_{2}(ds, dy):=N_{2}(ds, dy)-\mathfrak{L}(dy)ds
$
where $N_{2}(ds, dy)$ and $\mathfrak{L}(dy)ds$ are defined  in \eqref{markov repre}; the function $h(s, y)$ is defined as in \eqref{def of h}. In particular, if $F(t, {\bf u}(t), i)=|{\bf u}(t)|^{2}$, then $\sum_{j=1}^{m}\gamma_{ij}|{\bf u}(t)|^{2}=0$. We  therefore obtain the following energy equality:
\begin{align}
&|{\bf u}(t)|^{2}\label{energy equality}
\\&=|{\bf u}(0)|^{2}+2\int^{t}_{0}\langle -\nu{\bf Au}(s)-{\bf B}({\bf u}(s))+{\bf f}(s), {\bf u}(s)\rangle_{V}ds\nonumber
\\&+\int^{t}_{0}\|\sigma(s, {\bf u}(s),\mathfrak{r}(s))\|^{2}_{L_{Q}}ds+2\int^{t}_{0}\langle {\bf u}(s), \sigma(s, {\bf u}(s),\mathfrak{r}(s))dW(s)\rangle\nonumber
\\&+\int^{t}_{0}\int_{Z}\Big(|{\bf u}(s-)+{\bf G}(s, {\bf u}(s-), \mathfrak{r}(s-), z)|^{2}-|{\bf u}(s-)|^{2}\Big)\tilde{N}_{1}(dz, ds)\nonumber
\\&+\int^{t}_{0}\int_{Z}\Big(|{\bf u}(s)+{\bf G}(s, {\bf u}(s), \mathfrak{r}(s), z)|^{2}-|{\bf u}(s)|^{2}\nonumber
\\&\qquad\qquad-2\Big({\bf u}(s), {\bf G}(s, {\bf u}(s), \mathfrak{r}(s), z)\Big)_{H}\Big)\nu_{1}(dz)ds.\nonumber
\end{align}

\subsection{Path Space and its Topology}
Denoted by $\{\tau_{i}\}_{i=1}^{4}$ the topologies
\begin{alignat*}{2}
\tau_{1}&=\mbox{$J$-topology}\quad  &&\mbox{on} \quad \mathcal{D}([0, T]; V'),\\
\tau_{2}&=\mbox{weak topology}\quad &&\mbox{on}\quad L^{2}(0, T; V),\\
\tau_{3}&=\mbox{weak-star topology}\quad &&\mbox{on}\quad L^{\infty}(0, T; H),\\
\tau_{4}&=\mbox{strong topology}\quad&&\mbox{on}\quad L^{2}(0, T; H),
\end{alignat*}
and $\Omega_{i}$ the spaces
\begin{align*}
\Omega_{1}&=\mathcal{D}([0, T]; V'),\\
\Omega_{2}&=L^{2}(0, T; V),\\
\Omega_{3}&=L^{\infty}(0, T; H),\\
\Omega_{4}&=L^{2}(0, T; H).
\end{align*}
Then $\{(\Omega_{i}, \tau_{i})\}_{i=1}^{4}$ are all Lusin spaces (a topological space that is homeomorphic to a Borel set of a Polish space).

\begin{definition}\label{the path space of u}
Define the space $\Omega^{\ast}$ by
$
\Omega^{*}=\cap_{i=1}^{4}\Omega_{i}.
$
Let $\tau$ be the supremum of the topologies\footnote{The coarest topology that is finer than each $\tau_{i}$. See, e.g., \cite[Sec. 5.2]{Howes}} induced on $\Omega^{\ast}$ by all $\tau_{i}$. Then it follows from a result of Metivier \cite[Prop. 1, Ch. IV]{Metivier} that\footnote{Note that all the natural inclusion $\Omega_{i}\hookrightarrow\Omega_{1}$, $i=2, 3, 4$, are continuous.}
\begin{enumerate}
\item $(\Omega^{\ast}, \tau)$ is a Lusin space.
\item Let $\{\mu_{k}\}_{k\in\mathbb{N}}$ be a sequence of Borel probability laws on $\Omega^{\ast}$ (on the Borel $\sigma$-algebra $\mathcal{B}(\tau)$) such that their images $\{\mu^{i}_{k}\}_{k\in\mathbb{N}}$ on $(\Omega_{i}, \mathcal{B}(\tau_{i}))$ are tight for $\tau_{i}$ for all $i$. Then $\{\mu_{k}\}_{k\in\mathbb{N}}$ is tight for $\tau$. 
\end{enumerate}
\end{definition}
Let $(\Omega, \mathcal{F}, \mathcal{P})$ be a (complete) probability space on which the following are defined:
\begin{enumerate}
\item $W=\{W(t): 0\leq t\leq T\}$, an $H$-valued $Q$-Wiener process.
\item $N=\{N(z, t): 0\leq t\leq T\quad\mbox{and}\quad z\in Z\}$, the Poisson random measure.
\item $\mathfrak{r}=\{\mathfrak{r}(t): 0\leq t\leq T\}$, the Markov chain.
\item $\xi$, an $H$-valued random variable.
\end{enumerate}
Assume that $\xi$, $W$, $N$, and $\mathfrak{r}$ are mutually independent. For each $t$, define the $\sigma$-field $\mathcal{F}_{t}$ to be
$
\sigma(\xi, \mathfrak{r}(t), W(s), N(z, s): z\in Z, 0\leq s\leq t)\ \cup\ \{\text{all $\mathcal{P}$-null sets in $\mathcal{F}$}\}.
$
Then it is clear that $(\mathcal{F}_{t})$ satisfies the usual conditions, and both $W(t)$ and $N(z, t)$ are $\mathcal{F}_{t}$-adapted processes.

Denoting by $\mathcal{J}$ the $J$-topology in the space $\mathcal{D}([0, T]; S)$,  the path space of solutions of \eqref{N-S M original} is given by 
\begin{align}\label{the path space}
\begin{split}
\Omega^{\dagger}&:=\Omega^{\ast}\times\mathcal{D}([0, T]; \mathcal{S}),\\
\tau^{\dagger}&:=\tau\times\mathcal{J}.
\end{split}
\end{align}

\begin{definition}[Weak solution]\label{weak solution}
Suppose that, on some probability space $(\Omega, \mathcal{F}, \mathcal{P})$, there exists an increasing family $(\mathcal{G}_{t})$ of sub $\sigma$-field of $\mathcal{F}$, an $H\times\mathcal{S}$-valued random vector $(\xi, r)$ with given distribution $\mu$, and $\mathcal{G}_{t}$-adapted processes $W(t)$, $\tilde{N}(z, t)$, ${\bf u}(t)$, and $\mathfrak{r}(t)$ such that
\begin{enumerate}
\item $(W(t), \mathcal{G}_{t}, \mathcal{P})$ is an $H$-valued $Q$-Wiener martingale.
\item $\tilde{N}(z, t)$ is an square integrable martingale with respect to $(\mathcal{G}_{t})$.
\item $W(t)$, $N(z, t)$, and $(\xi, r)$ are mutually independent.
\item For all $\rho\in V$, 
$$\mathcal{P}\Big\{\omega: \int^{T}_{0}\langle\nu{\bf A}{\bf u}(\omega, s)+{\bf B}({\bf u}(\omega, s)),\rho\rangle_{V}ds<\infty\Big\} =1.$$
\item For all $t$ and $\rho\in V$,
\begin{align*}
\begin{split}
&\langle {\bf u}(\omega, t),\rho\rangle_{V}+\int^{t}_{0}\langle\nu{\bf A}{\bf u}(\omega, s)+{\bf B}({\bf u}(\omega, s)),\rho\rangle_{V}ds
\\&=\langle \xi,\rho\rangle_{V}+\int^{t}_{0}\langle{\bf f}(s),\rho\rangle_{V}ds+\langle\int^{t}_{0}\sigma(t, {\bf u}(s), \mathfrak{r}(s))dW(\omega, s),\rho\rangle_{V}
\\&\quad+\langle\int^{t}_{0}\int_{Z}{\bf G}(s, {\bf u}(s-), \mathfrak{r}(s-), z)\tilde{N}_{1}(\omega, dz, ds),\rho\rangle_{V}
\end{split}
\end{align*}
$\mathcal{P}$-almost surely.
\end{enumerate}
Then the family $(\Omega, \mathcal{F}, (\mathcal{G}_{t}), \mathcal{P}, \xi, r, \{W(t)\}, \{N(z, t)\}, \{{\bf u}(t)\}, \{\mathfrak{r}(t)\})$ is called a weak solution of the stochastic Navier-Stokes equation \eqref{N-S M original}.
\end{definition}

\begin{definition}[Pathwise uniqueness]
A weak solution of the stochastic Navier-Stokes equation \eqref{N-S M original} is said to be pathwise unique if, for any two weak solutions give by 
\begin{align*}
(\Omega, \mathcal{F}, (\mathcal{G}^{i}_{t}), \mathcal{P}, \xi^{i}, r^{i}, \{W(t)\}, \{N(z, t)\}, \{{\bf u}^{i}(t)\}, \{\mathfrak{r}^{i}(t)\})
\end{align*}
for $i=1, 2$, the following holds:
\begin{align*}
\mathcal{P}\Big\{({\bf u}^{1}(t), \mathfrak{r}^{1}(t))=({\bf u}^{2}(t), \mathfrak{r}^{2}(t))\quad\forall t\geq 0\Big\}=1.
\end{align*}
\end{definition}

\begin{remark*}
The definition of weak solutions to equation \eqref{N-S M} follow from Definitiion \ref{weak solution} with ${\bf B}_{k_{\epsilon}}$ in the place of $\bf B$.
\end{remark*}

We collect some lemmata here for the benefit of the reader.

\begin{lemma}\label{prop conv}
Let $f\in C(\Omega)$ and $\sup_{n}\mathbb{E}^{\mathcal{P}_{n}}[|f|^{1+\delta}]\leq C$ for some $\delta>0$. Let $\{\mathcal{P}_{n}\}$ be a sequence of probability measures on $\Omega$ with $\mathcal{P}_{n}\Rightarrow \mathcal{P}$, as $n\rightarrow\infty$. Then we have $\mathbb{E}^{\mathcal{P}_{n}}(|f|)\rightarrow\mathbb{E}^{\mathcal{P}}(|f|)$.
\end{lemma}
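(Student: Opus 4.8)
The plan is to run the classical uniform-integrability argument that upgrades weak convergence from bounded continuous test functions to the \emph{unbounded} continuous function $|f|$. The moment hypothesis $\sup_{n}\mathbb{E}^{\mathcal{P}_{n}}[|f|^{1+\delta}]\leq C$ is exactly the uniform integrability that renders the tails negligible, so the proof reduces to a truncation together with an $\varepsilon/3$ split.

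First I would introduce, for each level $M>0$, the truncated function $\phi_{M}:=\min(|f|,M)$. Since $f\in C(\Omega)$ and $x\mapsto\min(x,M)$ is continuous and bounded, $\phi_{M}$ is a bounded continuous function on $\Omega$; hence, by the definition of weak convergence $\mathcal{P}_{n}\Rightarrow\mathcal{P}$, we have $\mathbb{E}^{\mathcal{P}_{n}}[\phi_{M}]\to\mathbb{E}^{\mathcal{P}}[\phi_{M}]$ for each fixed $M$. Next I would control the truncation error uniformly in $n$: on $\{|f|>M\}$ one has $|f|\le M^{-\delta}|f|^{1+\delta}$, so
\[
0\le \mathbb{E}^{\mathcal{P}_{n}}[|f|]-\mathbb{E}^{\mathcal{P}_{n}}[\phi_{M}]
=\mathbb{E}^{\mathcal{P}_{n}}[(|f|-M)^{+}]
\le \mathbb{E}^{\mathcal{P}_{n}}\big[|f|\,\mathbf{1}_{\{|f|>M\}}\big]
\le \frac{1}{M^{\delta}}\mathbb{E}^{\mathcal{P}_{n}}[|f|^{1+\delta}]
\le \frac{C}{M^{\delta}}.
\]
To obtain the same tail bound under the limit measure, I would first transfer the moment estimate to $\mathcal{P}$: for each $M$ the map $\min(|f|^{1+\delta},M)$ is bounded continuous, so $\mathbb{E}^{\mathcal{P}}[\min(|f|^{1+\delta},M)]=\lim_{n}\mathbb{E}^{\mathcal{P}_{n}}[\min(|f|^{1+\delta},M)]\le C$, and letting $M\to\infty$ with monotone convergence yields $\mathbb{E}^{\mathcal{P}}[|f|^{1+\delta}]\le C$. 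Consequently the identical estimate $\mathbb{E}^{\mathcal{P}}[|f|]-\mathbb{E}^{\mathcal{P}}[\phi_{M}]\le C/M^{\delta}$ holds as well.

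Finally I would combine these through the triangle inequality
\[
\big|\mathbb{E}^{\mathcal{P}_{n}}[|f|]-\mathbb{E}^{\mathcal{P}}[|f|]\big|
\le \big|\mathbb{E}^{\mathcal{P}_{n}}[|f|]-\mathbb{E}^{\mathcal{P}_{n}}[\phi_{M}]\big|
+\big|\mathbb{E}^{\mathcal{P}_{n}}[\phi_{M}]-\mathbb{E}^{\mathcal{P}}[\phi_{M}]\big|
+\big|\mathbb{E}^{\mathcal{P}}[\phi_{M}]-\mathbb{E}^{\mathcal{P}}[|f|]\big|.
\]
Given $\varepsilon>0$, I would choose $M$ so large that $C/M^{\delta}<\varepsilon/3$, which simultaneously controls the first and third terms for all $n$; then, with $M$ now fixed, the middle term tends to $0$ as $n\to\infty$ by weak convergence, hence is $<\varepsilon/3$ for $n$ large. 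This gives $\limsup_{n}\big|\mathbb{E}^{\mathcal{P}_{n}}[|f|]-\mathbb{E}^{\mathcal{P}}[|f|]\big|\le\varepsilon$ for every $\varepsilon>0$, which is the claim.

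I expect the only delicate point to be the passage of the moment bound to the limit measure $\mathcal{P}$, together with the underlying justification that weak convergence $\Rightarrow$ on the (possibly non-metrizable) Lusin path space genuinely delivers convergence of integrals of bounded continuous functions; once that is granted, the truncation and $\varepsilon/3$ steps are entirely routine.
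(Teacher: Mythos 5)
Your proof is correct. There is nothing in the paper to compare it against: Lemma \ref{prop conv} is stated among the background lemmata without any proof or even a citation (unlike Lemmata \ref{Metivier cpt} and \ref{Aldous}, which at least point to Metivier and Walsh); the authors treat it as a standard fact. Your argument is the classical one that fills this gap — truncation at level $M$, the tail bound $|f|\mathbf{1}_{\{|f|>M\}}\le M^{-\delta}|f|^{1+\delta}$ furnished by the uniform moment hypothesis, and an $\varepsilon/3$ split — and every step checks out, including the one genuinely non-trivial point: transferring the moment bound to the limit measure $\mathcal{P}$ by testing against $\min(|f|^{1+\delta},M)$ and invoking monotone convergence, so that the truncation error under $\mathcal{P}$ is controlled by the same $C/M^{\delta}$. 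Regarding the caveat you flag at the end: on the (possibly non-metrizable) Lusin path spaces used in this paper, weak convergence $\mathcal{P}_{n}\Rightarrow\mathcal{P}$ is \emph{defined} as convergence of integrals of bounded continuous test functions, which is exactly and only what your proof consumes, so no additional justification is needed; that is also precisely how the paper invokes this lemma in verifying assertions $\mathbf{M3}$ and $\mathbf{M3'}$.
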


\begin{lemma}\label{Metivier cpt}
Consider the continuous dense embeddings $V\hookrightarrow H\hookrightarrow V'$ with $V\hookrightarrow H$ and $H\hookrightarrow V'$ being compact. Suppose that a set $B$ in $L^{q}(0, T; H)\cap\mathcal{D}([0, T]; V')$ is relatively compact in $\mathcal{D}([0, T]; V')$ and bounded in $L^{q}(0, T; V)$. Then $B$ is relatively compact in $L^{q}(0, T; H)$.
\end{lemma}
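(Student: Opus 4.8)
\section*{Proof proposal}

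The plan is to combine an Ehrling-type interpolation inequality, supplied by the compact embedding $V\hookrightarrow H$, with the elementary observation that convergence in the $J$-topology forces pointwise convergence in $V'$ at almost every time. First I would record the interpolation inequality: since $V\hookrightarrow H$ is compact and $H\hookrightarrow V'$ is continuous, Ehrling's lemma furnishes, for every $\epsilon>0$, a constant $C_\epsilon>0$ such that
\[
|v|\leq \epsilon\|v\|+C_\epsilon\|v\|_{V'}\qquad\text{for all }v\in V.
\]
This is the single analytic input that converts control in the strong $V$-norm and the weak $V'$-norm into control in the $H$-norm.

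Next, to establish relative compactness I would take an arbitrary sequence $\{{\bf u}_n\}\subset B$ and, using relative compactness of $B$ in $\mathcal{D}([0,T];V')$, pass to a subsequence (not relabeled) converging in the $J$-topology to some ${\bf u}\in\mathcal{D}([0,T];V')$. The key intermediate claim is that this subsequence in fact converges in $L^q(0,T;V')$. Two facts drive this: relative compactness in the Skorokhod space $\mathcal{D}([0,T];V')$ forces the uniform bound $\sup_n\sup_{t\in[0,T]}\|{\bf u}_n(t)\|_{V'}<\infty$, while $J$-convergence yields ${\bf u}_n(t)\to{\bf u}(t)$ in $V'$ at every continuity point $t$ of ${\bf u}$, hence for all $t$ outside the at most countable (thus Lebesgue-null) set of jump times of ${\bf u}$. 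Dominated convergence then gives $\int_0^T\|{\bf u}_n(t)-{\bf u}(t)\|_{V'}^q\,dt\to0$, so $\{{\bf u}_n\}$ is Cauchy in $L^q(0,T;V')$.

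Finally I would apply the interpolation inequality pointwise in time to the differences ${\bf u}_n(t)-{\bf u}_m(t)$, raise to the $q$-th power, and integrate, to obtain
\[
\int_0^T|{\bf u}_n(t)-{\bf u}_m(t)|^q\,dt
\leq 2^{q-1}\epsilon^q\int_0^T\|{\bf u}_n-{\bf u}_m\|^q\,dt
+2^{q-1}C_\epsilon^q\int_0^T\|{\bf u}_n-{\bf u}_m\|_{V'}^q\,dt.
\]
The first integral is bounded by $2^qM^q$ with $M:=\sup_n\|{\bf u}_n\|_{L^q(0,T;V)}<\infty$ by the boundedness hypothesis, so the first term is $\leq 2^{2q-1}\epsilon^qM^q$ uniformly in $n,m$; the second term tends to $0$ as $n,m\to\infty$ by the previous step. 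Sending first $n,m\to\infty$ and then $\epsilon\to0$ shows $\{{\bf u}_n\}$ is Cauchy, hence convergent, in the Banach space $L^q(0,T;H)$. As the initial sequence was arbitrary, $B$ is relatively compact in $L^q(0,T;H)$.

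I expect the main obstacle to be the gap between the $J$-topology and genuine uniform (or $L^q$) convergence: because the Skorokhod topology permits small time-reparametrizations, one cannot upgrade $J$-convergence to uniform convergence directly, and the correct remedy is precisely the almost-everywhere pointwise convergence at continuity points together with the uniform sup-bound inherited from Skorokhod compactness, which is exactly what licenses the dominated-convergence step producing $L^q(0,T;V')$ convergence. Once that step is secured, the remainder is the routine Ehrling-interpolation estimate.
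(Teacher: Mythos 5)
Your proposal is correct. Note, however, that the paper does not actually prove this lemma: its ``proof'' is a one-line deferral to Metivier \cite[Lem.~3, Ch.~VI]{Metivier}, with the remark that the argument rests on the Aubin--Lions lemma. What you have written is, in effect, a correct self-contained version of exactly that strategy: the Ehrling inequality $|v|\leq\epsilon\|v\|+C_\epsilon\|v\|_{V'}$ is the interpolation engine at the heart of Aubin--Lions-type compactness results, and your Cauchy argument (splitting into an $\epsilon$-term controlled by the uniform $L^{q}(0,T;V)$ bound and a $C_\epsilon$-term killed by $L^{q}(0,T;V')$ convergence) is the standard way it is deployed. The genuinely nonroutine contribution of your write-up is the bridge you identify as the ``main obstacle'': converting relative compactness in the Skorokhod space $\mathcal{D}([0,T];V')$ into convergence in $L^{q}(0,T;V')$. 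Your handling of it is sound --- $J_1$-convergence gives $\mathbf{u}_n(t)\to\mathbf{u}(t)$ in $V'$ at every continuity point of the c\`adl\`ag limit, hence off a countable set, and the uniform sup-bound (which follows even more directly from the time-change characterization of $J_1$-convergence along the extracted subsequence, without invoking the full compactness characterization) licenses dominated convergence. One pedantic point worth a sentence in a final write-up: Ehrling's lemma is applied to $\mathbf{u}_n(t)-\mathbf{u}_m(t)$, which lies in $V$ only for a.e.\ $t$ (since members of $B$ are merely $L^{q}(0,T;V)$), but this suffices because the inequality is only integrated. In short: the paper buys brevity by citation; your argument buys a complete proof and makes explicit the Skorokhod-to-$L^{q}(0,T;V')$ step that the citation leaves hidden.
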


\begin{proof}
The proof is based on the Aubin-Lions Lemma, and we refer the interested reader to \cite[Lem. 3, Ch. VI]{Metivier}. 
\end{proof}

\begin{lemma}[Aldous' criterion]\label{Aldous}
Let $\{X_{n}\}_{n=1}^{\infty}$ be a sequence of processes with paths in the space $\mathcal{D}([0, T]; V')$. Suppose that for each rational numbers $t\in [0, T]$, we have
\begin{align}\label{Sundar Aldous}
\lim_{N\rightarrow\infty}\limsup_{n}\mathcal{P}\Big(\|X_{n}(t)\|_{V'}>N\Big)=0.
\end{align}
Then $\{X_{n}\}_{n=1}^{\infty}$ is tight in $\mathcal{D}([0, T]; V')$ if the following condition is satisfied:

For every sequence $(T_{n}, \delta_{n})$ where each $T_{n}$ is a stopping time such that $T_{n}+\delta_{n}\leq T$, and $\delta_{n}>0$, $\delta_{n}\rightarrow 0$, we have 
$
\|X_{n}(T_{n}+\delta_{n})-X_{n}(T_{n}))\|_{V'}\rightarrow 0
$
in probability as $n\rightarrow\infty$.
\end{lemma}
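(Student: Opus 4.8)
The plan is to deduce this from the classical characterization of tightness in the Skorokhod space $\mathcal{D}([0,T];V')$. Recall (as in Joffe--M\'etivier or Ethier--Kurtz) that a sequence $\{X_n\}$ of $\mathcal{D}([0,T];V')$-valued random elements is tight provided two conditions hold: a compact containment condition at each time in a dense subset of $[0,T]$, and a uniform control of the Skorokhod modulus of continuity
$$w'_\delta(x)=\inf_{\{t_i\}}\max_i\sup_{s,t\in[t_{i-1},t_i)}\|x(s)-x(t)\|_{V'},$$
the infimum being over partitions $0=t_0<\cdots<t_k=T$ with $\min_i(t_i-t_{i-1})>\delta$, namely $\lim_{\delta\to 0}\limsup_n\mathcal{P}(w'_\delta(X_n)>\eta)=0$ for every $\eta>0$. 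Condition \eqref{Sundar Aldous}, together with the compact embedding $H\hookrightarrow V'$ that makes bounded sets relatively compact in $V'$, supplies the compact containment at rational times, and the rationals are dense in $[0,T]$; so the remaining and essential task is the modulus estimate.

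First I would reformulate the hypothesis in its uniform form. The sequential statement --- that $\|X_n(T_n+\delta_n)-X_n(T_n)\|_{V'}\to 0$ in probability for every sequence of stopping times $T_n$ and $\delta_n\downarrow 0$ --- is equivalent, by a standard extraction argument, to the Aldous condition: for all $\eta,\epsilon>0$ there exist $\delta>0$ and $n_0$ such that
$$\sup_{\tau}\ \sup_{0\le\theta\le\delta}\ \mathcal{P}\big(\|X_n(\tau+\theta)-X_n(\tau)\|_{V'}\ge\eta\big)\le\epsilon\qquad(n\ge n_0),$$
the first supremum running over stopping times $\tau$ with $\tau+\theta\le T$. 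Indeed, if the uniform bound failed, one could select along a subsequence stopping times $\tau_n$ and increments $\theta_n\downarrow 0$ for which the probability stays bounded away from $0$, contradicting the hypothesis applied to $T_n=\tau_n$ and $\delta_n=\theta_n$.

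The core of the argument is then to pass from this uniform increment control to the modulus estimate. Fix $\eta>0$ and define inductively the stopping times $\tau_0^n=0$ and
$$\tau_{k+1}^n=\inf\{t>\tau_k^n:\ \|X_n(t)-X_n(\tau_k^n)\|_{V'}>\eta\}\wedge T.$$
If consecutive exit times satisfy $\tau_{k+1}^n-\tau_k^n>\delta$ for all $k$ up to the first index reaching $T$, then the points $\{\tau_k^n\}$ form an admissible $\delta$-sparse partition on which each oscillation is at most $2\eta$, whence $w'_\delta(X_n)\le 2\eta$. The gaps are handled by the uniform Aldous bound applied at each stopping time $\tau_k^n$, which makes $\mathcal{P}(\tau_{k+1}^n-\tau_k^n\le\delta)$ small uniformly in $n$ and $k$. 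To control the random number of intervals needed to reach $T$, one caps the induction at a fixed index $K$ and shows, again via the increment bound, that $\mathcal{P}(\tau_K^n<T)$ is small for $K$ large uniformly in $n$; a union bound over $k\le K$ then forces all gaps to exceed $\delta$ with high probability, yielding $\lim_{\delta\to0}\limsup_n\mathcal{P}(w'_\delta(X_n)>2\eta)=0$.

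The step I expect to be the main obstacle is precisely this last passage: converting the uniform increment bound into simultaneous control of all inter-exit gaps and of the random, $n$-dependent number of intervals needed to cover $[0,T]$. The delicacy is that the exit times $\tau_k^n$ are themselves random and depend on $n$, so the union bound must be taken over an index set whose cardinality is only bounded in probability; one has to interleave the ``gaps are large'' estimate with the stopping argument that caps the number of intervals at $K$, choosing $\delta$ and $K$ in the correct order and only afterwards letting $\eta\to0$. Once the modulus estimate is in hand, combining it with the compact containment from \eqref{Sundar Aldous} and invoking the tightness characterization in $\mathcal{D}([0,T];V')$ yields tightness of $\{X_n\}$, completing the proof.
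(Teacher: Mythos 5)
The paper itself does not prove this lemma: its ``proof'' is a citation to Walsh (Thm.\ 6.8, p.\ 353), so yours is a genuine proof attempt. The core of your argument --- reformulating the sequential stopping-time hypothesis as the uniform Aldous condition by a contradiction/extraction argument, and then converting that uniform increment bound into control of the Skorokhod modulus $w'_\delta$ via successive exit times --- is the standard and essentially correct route (Aldous, Billingsley, Ethier--Kurtz), and I have no objection to that part in outline.

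There is, however, a genuine gap in your compact containment step. You assert that condition \eqref{Sundar Aldous}, ``together with the compact embedding $H\hookrightarrow V'$ that makes bounded sets relatively compact in $V'$,'' supplies compact containment at rational times. This conflates two different notions of boundedness: compactness of the embedding $H\hookrightarrow V'$ means that sets bounded in the \emph{$H$-norm} are relatively compact in $V'$; it says nothing about sets bounded in the \emph{$V'$-norm}, and \eqref{Sundar Aldous} only bounds $\|X_n(t)\|_{V'}$ in probability. Since $V'$ is an infinite-dimensional Hilbert space, its closed balls are not norm-compact (Riesz), so compact containment simply does not follow from \eqref{Sundar Aldous}. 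Indeed, the lemma as literally printed cannot be proved this way because it is false in this generality: take $X_n(t)\equiv x_n$ with $\{x_n\}$ an orthonormal sequence in $V'$; then \eqref{Sundar Aldous} and the stopping-time condition hold trivially, yet the laws of these constant paths have no weakly convergent subsequence in $\mathcal{D}([0,T];V')$. The hypothesis actually needed (and the one in Walsh's theorem that the paper invokes) is tightness of the time-marginals $\{X_n(t)\}$ in $V'$. This is what is really available in the paper's applications: in Lemma \ref{lemma 1} and in Section \ref{original}, the verification of \eqref{Sundar Aldous} proceeds through $\mathbb{E}|{\bf u}_{n}(t)|^{2}\leq C$, i.e.\ an $H$-norm bound, and $H$-bounded sets \emph{are} relatively compact in $V'$ by the compact embedding. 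Your proof is repaired by strengthening the hypothesis to such marginal tightness (or $H$-boundedness in probability); with that replacement, your two-step argument goes through.
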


\begin{proof}
The interested reader may consult, e.g., \cite[p. 353 Thm. 6.8]{Walsh} for the proof of this lemma.
\end{proof}

\section{A priori estimates}\label{a priori estimates}
In the section, we establish a priori estimates to the approximation of the regularized equation \eqref{N-S M}. 
Recalling the definitions of $\{e_{i}\}_{i=1}^{\infty}$, $H_{n}$, and $\Pi_{n}$ in Section \ref{pre}, we define $W_n:=\Pi_n W$, $\sigma_n:=\Pi_n \sigma$, and ${\bf G}_n:=\Pi_n {\bf G}$. Let ${\bf u}_{0}$ be an $H$-valued random variable and $\epsilon>0$ be \emph{fixed} throughout this section.

Let ${\bf u}_{n}$ be the solution to following equation: for each ${\bf v}\in H_n$, 
\begin{align}
{\bf d}({\bf u}_{n}(t), {\bf v})=&\big[\big(-\nu{\bf Au}_{n}(t)-{\bf B}_{k_{\epsilon}}({\bf u}_{n}(t)), {\bf v}\big)\big]dt+\langle{\bf f}(t),{\bf v}\rangle_{V}dt\nonumber
\\&+\big({ \sigma}_n(t,{\bf u}_{n}(t),\mathfrak{r}(t))dW_{n}(t),{\bf v}\big)\label{Galerkin M}
\\&+\big(\int_{Z}{\bf G}_n(t, {\bf u}_{n}(t-),\mathfrak{r}(t-), z)\tilde{N}_{1}(dz,dt),{\bf v}\big)\nonumber
\end{align}
with ${\bf u}_{n}(0)=\Pi_n {\bf u}_{0}$, where $\tilde{N}_{1}(dz,ds)=N_{1}(dz,ds)-\nu_{1}(dz)ds$.

\begin{proposition}[A priori estimates]\label{A priori estimates of N-S M}
Let $T>0$ be fixed. Suppose that $\mathbb{E}|{\bf u}_{0}|^{2}<\infty$ and ${\bf f}\in L^{2}(0, T; V')$. Then under Hypotheses ${\bf H}$, there exist  constants $C_1$ and $C_2$ that depend on $\mathbb{E}|{\bf u}_{0}|^{2},\mathbb{E}\int^{T}_{0}\|{\bf f}(s)\|^{2}_{V'}ds, \nu, K, T$, and $C_3$ that depends on $\mathbb{E}|{\bf u}_{0}|^{3},\mathbb{E}\int^{T}_{0}\|{\bf f}(s)\|^{3}_{V'}ds, \nu, K, T$ such that $\forall t \in [0, T]$, 
\begin{align}
\mathbb{E}|{\bf u}_{n}(t)|^{2}+\nu\mathbb{E}\int^{t}_{0}\|{\bf u}_{n}(s)\|^{2}ds &\leq C_1,\,\,\text{and}\label{L^2 M} \\
\mathbb{E}\sup_{0\leq t\leq T}|{\bf u}_{n}(t)|^{2}+\nu\mathbb{E}\int^{T}_{0}\|{\bf u}_{n}(s)\|^{2}ds&\leq C_2 \label{L^2 sup M}
\end{align}

Suppose further that $\mathbb{E}|{\bf u}_{0}|^{3}<\infty$ and $\mathbf{f}\in L^{3}(0, T; V')$. Then under Hypotheses ${\bf H}$, we have
\begin{equation}
\mathbb{E}\sup_{0\leq t\leq T}|{\bf u}_{n}(t)|^{3}+2\nu\mathbb{E}\int^{T}_{0}|{\bf u}_{n}(s)|\|{\bf u}_{n}(s)\|^{2}ds\leq C_3\label{L^3 sup M}
\end{equation}
\end{proposition}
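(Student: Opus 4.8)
The plan is to derive all three bounds from the Itô change-of-variables formula applied to the finite-dimensional system \eqref{Galerkin M}, which, being a genuine It\^o SDE on $H_n$, makes each application of the formula fully rigorous. Three structural facts do the heavy lifting throughout. First, $\langle\mathbf{A}\mathbf{u}_n,\mathbf{u}_n\rangle=\|\mathbf{u}_n\|^2$ by \eqref{property of Stokes operator}, so the viscous term always yields dissipation of the right sign. Second, the regularized inertial term is skew-symmetric, $b(k_\epsilon\mathbf{u}_n,\mathbf{u}_n,\mathbf{u}_n)=0$: this follows from \eqref{vanishing of b}, valid because $\eta_\epsilon\ast\mathbf{u}_n$ is again divergence free, so $\mathbf{B}_{k_\epsilon}$ drops out of every energy identity when tested against $\mathbf{u}_n$. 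Third, the Markov generator contributes nothing to the moments of $|\mathbf{u}_n|$, since for $F(t,\mathbf{v},i)=|\mathbf{v}|^p$ one has $\sum_{j}\gamma_{ij}|\mathbf{v}|^p=|\mathbf{v}|^p\sum_j\gamma_{ij}=0$; this is why the Markov switching does not appear in any of the constants.

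For \eqref{L^2 M} I would apply It\^o's formula to $|\mathbf{u}_n(t)|^2$ to obtain the Galerkin analogue of the energy equality \eqref{energy equality}. The dissipation $-2\nu\int_0^t\|\mathbf{u}_n\|^2$ appears directly; the forcing is absorbed by Young's inequality via $2|\langle\mathbf{f},\mathbf{u}_n\rangle|\le\tfrac{\nu}{2}\|\mathbf{u}_n\|^2+\tfrac{2}{\nu}\|\mathbf{f}\|_{V'}^2$; and the trace term together with the Poisson compensator $\int_0^t\int_Z|\mathbf{G}_n|^2\nu_1$ are both dominated by $K(1+|\mathbf{u}_n|^2)$ using $\mathbf{H1}$ and $\mathbf{H3}$ at $p=2$. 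Taking expectations annihilates the two stochastic integrals, and Gr\"onwall closes \eqref{L^2 M}. For the supremum bound \eqref{L^2 sup M} I would take $\sup_t$ before expectations and control the Wiener martingale by the Burkholder--Davis--Gundy inequality through $C\,\mathbb{E}(\int_0^T|\mathbf{u}_n|^2\|\sigma_n\|_{L_Q}^2\,ds)^{1/2}$ and the compensated Poisson martingale by its jump analogue, in each case extracting a factor $\sup_t|\mathbf{u}_n|$ and using Young's inequality to absorb $\tfrac14\,\mathbb{E}\sup_t|\mathbf{u}_n|^2$ into the left side before a final Gr\"onwall step.

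The cubic estimate \eqref{L^3 sup M} is the substantive one, and here I would apply It\^o's formula to $F(\mathbf{v})=|\mathbf{v}|^3$, with $F_v=3|\mathbf{v}|\mathbf{v}$ and $F_{vv}=3|\mathbf{v}|I+3|\mathbf{v}|^{-1}\mathbf{v}\otimes\mathbf{v}$. The viscous contribution is $-3\nu\int_0^t|\mathbf{u}_n|\|\mathbf{u}_n\|^2$; using Young on $3|\mathbf{u}_n|\langle\mathbf{f},\mathbf{u}_n\rangle$ to borrow $\nu\int|\mathbf{u}_n|\|\mathbf{u}_n\|^2$ leaves exactly the $2\nu$-dissipation recorded on the left of \eqref{L^3 sup M}, with a remainder bounded by $C(|\mathbf{u}_n|^3+\|\mathbf{f}\|_{V'}^3)$ that is integrable since $\mathbf{f}\in L^3(0,T;V')$. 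The inertial term again vanishes; the trace term is bounded by $C|\mathbf{u}_n|\,\|\sigma_n\|_{L_Q}^2\le CK(|\mathbf{u}_n|+|\mathbf{u}_n|^3)$ by $\mathbf{H1}$; and the Poisson compensator $\int_Z\big(|\mathbf{u}_n+\mathbf{G}_n|^3-|\mathbf{u}_n|^3-3|\mathbf{u}_n|(\mathbf{u}_n,\mathbf{G}_n)\big)\nu_1$ is dominated, by a second-order Taylor expansion, by $C\big(|\mathbf{u}_n|\int_Z|\mathbf{G}_n|^2\nu_1+\int_Z|\mathbf{G}_n|^3\nu_1\big)\le CK(1+|\mathbf{u}_n|+|\mathbf{u}_n|^3)$ using $\mathbf{H3}$ at $p=2$ and $p=3$. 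This is precisely where the cubic growth hypotheses are consumed.

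The main obstacle, and the step I would treat most carefully, is the compensated Poisson martingale $M_t=\int_0^t\int_Z\big(|\mathbf{u}_n(s-)+\mathbf{G}_n|^3-|\mathbf{u}_n(s-)|^3\big)\tilde N_1$ after $\mathbb{E}\sup_t$ is taken: a naive BDG bound on its quadratic variation appears to demand sixth moments of $\mathbf{G}$, which $\mathbf{H}$ does not provide. The remedy is the mean-value estimate $|g|\le 3|\mathbf{u}_n+\theta\mathbf{G}_n|^2|\mathbf{G}_n|\le 3\sup_r|\mathbf{u}_n(r)|^2\,|\mathbf{G}_n|$ at each jump, giving $[M]_T^{1/2}\le 3\sup_r|\mathbf{u}_n(r)|^2\big(\int_0^T\int_Z|\mathbf{G}_n|^2 N_1\big)^{1/2}$; Young's inequality with conjugate exponents $3/2$ and $3$ then absorbs $\tfrac14\,\mathbb{E}\sup_r|\mathbf{u}_n(r)|^3$ to the left and leaves $\mathbb{E}\big(\int_0^T\int_Z|\mathbf{G}_n|^2 N_1\big)^{3/2}$, which a Kunita-type moment inequality for Poisson integrals splits into $\mathbb{E}\int\int|\mathbf{G}_n|^3\nu_1$ and $\mathbb{E}\big(\int\int|\mathbf{G}_n|^2\nu_1\big)^{3/2}$, both controlled by $C\int_0^T(1+|\mathbf{u}_n|^3)\,ds$ via $\mathbf{H3}$ at $p=2,3$ and H\"older in time. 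The Wiener martingale is handled the same way. Assembling these bounds and applying Gr\"onwall's inequality yields \eqref{L^3 sup M}, with all constants independent of $n$, which is the whole point of the estimate.
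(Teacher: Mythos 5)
Your proposal is correct, and for the quadratic bounds \eqref{L^2 M}--\eqref{L^2 sup M} it follows the paper's proof essentially verbatim: It\^o's formula applied to $|\mathbf{u}_n|^2$ for the Galerkin system \eqref{Galerkin M}, dissipation from \eqref{property of Stokes operator}, cancellation of $\mathbf{B}_{k_\epsilon}$ from \eqref{vanishing of b}, Young's inequality on the forcing, Hypotheses $\mathbf{H1}$/$\mathbf{H3}$ with $p=2$, and the Davis inequality plus Young plus Gronwall for the supremum bound. The genuine divergence is in your treatment of the jump martingale in the cubic estimate. You keep the compensated integral $\int_0^t\int_Z\bigl(|\mathbf{u}_n(s-)+\mathbf{G}_n|^3-|\mathbf{u}_n(s-)|^3\bigr)\tilde{N}_1(dz,ds)$ and control it by BDG, bounding the quadratic variation pathwise via the mean-value estimate $3\sup_r|\mathbf{u}_n(r)|^2\bigl(\int\int|\mathbf{G}_n|^2N_1\bigr)^{1/2}$ and then invoking a Kunita-type moment inequality to reduce $\mathbb{E}\bigl(\int\int|\mathbf{G}_n|^2N_1\bigr)^{3/2}$ to quantities covered by $\mathbf{H3}$ at $p=2,3$; this is correct, and your observation that naive BDG would demand sixth moments of $\mathbf{G}$ flags exactly the right danger. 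The paper sidesteps BDG for this term entirely: it \emph{uncompensates}, rewriting the $\tilde{N}_1$-martingale plus the It\^o correction as $\int_0^t\int_Z\bigl(|\mathbf{u}_n(s-)+\mathbf{G}_n|^3-|\mathbf{u}_n(s-)|^3\bigr)N_1(dz,ds)-3\int_0^t\int_Z|\mathbf{u}_n|(\mathbf{u}_n,\mathbf{G}_n)_H\,\nu_1(dz)ds$ (the passage from \eqref{Ito 3} to \eqref{Ito 3'}). Since $N_1$ is a nonnegative random measure, after the elementary expansion $\bigl||a+b|^3-|a|^3\bigr|\le 3|a|^2|b|+3|a||b|^2+|b|^3$ the supremum over $t$ of the $N_1$-integral is dominated by its value at $T$, and its expectation is computed exactly against the intensity $\nu_1(dz)ds$; only $\mathbf{H3}$ with $p=1,2,3$ is consumed, with no martingale or Kunita inequality needed. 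Your route is correct but uses heavier machinery for the same conclusion; the paper's is more elementary and makes transparent why cubic growth in $\mathbf{H3}$ suffices.

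One technical point you should repair in a full write-up: you assert that ``taking expectations annihilates the two stochastic integrals,'' but a priori these are only local martingales, and claiming their means vanish (and then running Gronwall on $\mathbb{E}|\mathbf{u}_n(t)|^2$ and $\mathbb{E}|\mathbf{u}_n(t)|^3$) presupposes the very moment bounds you are trying to prove. The paper resolves this circularity by localizing with the stopping times $\tau_N$ and $\tau_N'$, deriving all estimates uniformly in $N$, and then letting $N\to\infty$. Your remark that finite-dimensionality makes It\^o's formula rigorous is true, but it does not by itself supply the integrability needed to take expectations; the localization (or an equivalent argument) must be built in.
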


\begin{remark*}
The  constants $C_i$ in \eqref{L^2 M}, \eqref{L^2 sup M}, and \eqref{L^3 sup M} is \emph{independent} of $\epsilon$.
\end{remark*}

\begin{proof}
Let $N>0$. Define 
\begin{align*}
\tau_{N}:=\inf\{t\in[0, T]:&|{\bf u}_{n}(t)|^{2}+\int^{t}_{0}\|{\bf u}_{n}(s)\|^{2}ds>N  
\\ \mbox{or}\ &|{\bf u}_{n}(t-)|^{2}+\int^{t}_{0}\|{\bf u}_{n}(s)\|^{2}ds>N\}.
\end{align*}
It follows from the It\^o formula that
\begin{align}\label{Ito u_{n}}
&|{\bf u}_{n}(t\wedge\tau_{N})|^{2}
\\&=|{\bf u}_{n}(0)|^{2}+2\int^{t\wedge\tau_{N}}_{0}\langle -\nu{\bf Au}_{n}(s)-{\bf B}_{k_{\epsilon}}({\bf u}_{n}(s))+{\bf f}(s), {\bf u}_{n}(s)\rangle_{V}ds\nonumber
\\&+\int^{t\wedge\tau_{N}}_{0}\|\sigma_{n}(s, {\bf u}_{n}(s),\mathfrak{r}(s))\|^{2}_{L_{Q}}ds\nonumber
\\&+2\int^{t\wedge\tau_{N}}_{0}\langle {\bf u}_{n}(s), \sigma_{n}(s, {\bf u}_{n}(s),\mathfrak{r}(s))dW_{n}(s)\rangle\nonumber
\\&+\int^{t\wedge\tau_{N}}_{0}\int_{Z}\Big(|{\bf u}_{n}(s-)+{\bf G}_{n}(s, {\bf u}_{n}(s-), \mathfrak{r}(s-), z)|^{2}\nonumber
\\&\qquad\qquad\qquad-|{\bf u}_{n}(s-)|^{2}\Big)\tilde{N}_{1}(dz, ds)\nonumber
\\&+\int^{t\wedge\tau_{N}}_{0}\int_{Z}\Big(|{\bf u}_{n}(s)+{\bf G}_{n}(s, {\bf u}_{n}(s), \mathfrak{r}(s), z)|^{2}-|{\bf u}_{n}(s)|^{2}\nonumber
\\&\qquad\qquad\qquad-2\big({\bf u}_{n}(s), {\bf G}_{n}(s, {\bf u}_{n}(s), \mathfrak{r}(s), z)\big)_{H}\Big)\nu_{1}(dz)ds.\nonumber
\end{align}
By \eqref{property of Stokes operator}, we have
\begin{align*}
2\int^{t\wedge\tau_{N}}_{0}-\nu\langle{\bf Au}_{n}(s), {\bf u}_{n}(s)\rangle_{V}ds=-2\nu\int^{t\wedge\tau_{N}}_{0}\|{\bf u}_{n}(s)\|^{2}ds.
\end{align*}
By \eqref{vanishing of b}, $\langle{\bf B}_{k_{\epsilon}}({\bf u}_{n}(s)), {\bf u}_{n}(s)\rangle_{V}=0$. For the external force term ${\bf f}$, one deduces from the basic Young inequality that
\begin{align*}
\Big|2\int^{t\wedge\tau_{N}}_{0}\langle{\bf f}(s), {\bf u}_{n}(s)\rangle_{V}\Big|
\leq\frac{1}{\nu}\int^{t}_{0}\|{\bf f}(s)\|^{2}_{V'}ds+\nu\int^{t\wedge\tau_{N}}_{0}\|{\bf u}_{n}(s)\|^{2}ds.
\end{align*}
A simplification of the last term in \eqref{Ito u_{n}} gives
\begin{align*}
\int^{t\wedge\tau_{N}}_{0}\int_{Z}|{\bf G}_{n}(s, {\bf u}_{n}(s), \mathfrak{r}(s), z)|^{2}\nu_{1}(dz)ds.
\end{align*}
Now, taking expectation on the both side of \eqref{Ito u_{n}}, using Hypotheses ${\bf H1}$ and ${\bf H3}$, and then putting everything together, we obtain
\begin{align}\label{a priori_1001}
&\mathbb{E}|{\bf u}_{n}(t\wedge\tau_{N})|^{2}+\nu\mathbb{E}\int^{t\wedge\tau_{N}}_{0}\|{\bf u}_{n}(s)\|^{2}ds
\\&\leq\mathbb{E}|{\bf u}_{0}|^{2}+\frac{1}{\nu}\mathbb{E}\int^{T}_{0}\|{\bf f}(s)\|_{V'}^{2}ds+2K\mathbb{E}\int^{t}_{0}|{\bf u}_{n}(s\wedge\tau_{N})|^{2}ds+2KT. \nonumber
\end{align}

Denoting $C_{T}:=\mathbb{E}|{\bf u}_{0}|^{2}+\frac{1}{\nu}\mathbb{E}\int^{T}_{0}\|{\bf f}(s)\|_{V'}^{2}ds+2KT(1+m^{2})$, we utilize the Gronwall inequality to obtain
\begin{align}
\mathbb{E}|{\bf u}_{n}(t\wedge\tau_{N})|^{2}\leq C_{T}e^{2KT}.\label{a priori_1002}
\end{align}
A combination of \eqref{a priori_1001} and \eqref{a priori_1002} yield
\begin{align}
\mathbb{E}|{\bf u}_{n}(t\wedge\tau_{N})|^{2}+\nu\mathbb{E}\int^{t\wedge\tau_{N}}_{0}\|{\bf u}_{n}(s)\|^{2}ds\leq C_{T}(1+2KTe^{2KT}).\label{a priori_1004}
\end{align}
In particular,
\begin{align}\label{a priori_1003}
\mathbb{E}\int^{t\wedge\tau_{N}}_{0}|{\bf u}_{n}(s)|^{2}ds
\leq\mathbb{E}\int^{t\wedge\tau_{N}}_{0}\|{\bf u}_{n}(s)\|^{2}ds\leq\frac{1}{\nu}C_{T}(1+2KTe^{2KT}).
\end{align}

An application of Davis inequality and basic Young inequality yield
\begin{align*}
&2\mathbb{E}\sup_{0\leq t\leq T\wedge\tau_{N}}\Big|\int^{t}_{0}\langle{\bf u}_{n}(s), \sigma(s, {\bf u}_{n}(s), \mathfrak{r}(s))dW_{n}(s)\rangle\Big|
\\&\leq 2\sqrt{2}\mathbb{E}\Big\{\Big(\int^{T\wedge\tau_{N}}_{0}|{\bf u}_{n}(s)|^{2}\|\sigma(s, {\bf u}_{n}(s), \mathfrak{r}(s))\|^{2}_{L_{Q}}\Big)^{\frac{1}{2}}\Big\}
\\&\leq 2\sqrt{2}\epsilon_{1}\mathbb{E}\sup_{0\leq t\leq T\wedge\tau_{N}}|{\bf u}_{n}(t)|^{2}
+2\sqrt{2}C_{\epsilon_{1}}\mathbb{E}\int^{T\wedge\tau_{N}}_{0}\|\sigma(s, {\bf u}_{n}(s),\mathfrak{ r}(s))\|^{2}_{L_{Q}}ds.
\end{align*} 
In a similar manner, we obtain
\begin{align*}
&2\mathbb{E}\sup_{0\leq t\leq T\wedge\tau_{N}}\int^{t}_{0}\int_{Z}\big({\bf u}_{n}(s-),  {\bf G}(s, {\bf u}_{n}(s-), \mathfrak{r}(s-), z )\big)_{H}\tilde{N}_{1}(dz, ds)
\\&\leq 2\sqrt{10}\mathbb{E}\int^{T\wedge\tau_{N}}|{\bf u}_{n}(s)||{\bf G}(s, {\bf u}_{n}(s), \mathfrak{r}(s), z)|\nu_{1}(dz)ds
\\&\leq 2\sqrt{10}\epsilon_{2}\mathbb{E}\sup_{0\leq t\leq T\wedge\tau_{N}}|{\bf u}_{n}(t)|^{2}
+2\sqrt{10}C_{\epsilon_{2}}\mathbb{E}\int^{T\wedge\tau_{N}}_{0}\int_{Z}|{\bf G}(s, {\bf u}_{n}(s), \mathfrak{r}(s), z)|^{2}\nu_{1}(dz)ds.
\end{align*}
Therefore, It\^o formula implies
\begin{align*}
&\mathbb{E}\sup_{0\leq t\leq T\wedge\tau_{N}}|{\bf u}_{n}(t)|^{2}+\nu\mathbb{E}\int^{T\wedge\tau_{N}}_{0}\|{\bf u}_{n}(s)\|^{2}ds
\\&\leq \mathbb{E}|{\bf u}_{0}|^{2}+\frac{1}{\nu}\mathbb{E}\int^{T}_{0}\|{\bf f}(s)\|_{V'}^{2}ds
+2(\sqrt{2}\epsilon_{1}+\sqrt{10}\epsilon_{2})\mathbb{E}\sup_{0\leq t\leq T\wedge\tau_{N}}|{\bf u}_{n}(t)|^{2}
\\&\quad+(2\sqrt{2}C_{\epsilon_{1}}+1)\mathbb{E}\int^{T\wedge\tau_{N}}_{0}\|\sigma(s, {\bf u}_{n}(s), \mathfrak{r}(s))\|^{2}_{L_{Q}}ds
\\&\quad+(2\sqrt{10}C_{\epsilon_{2}}+1)\mathbb{E}\int^{T\wedge\tau_{N}}_{0}\int_{Z}|{\bf G}(s, {\bf u}_{n}(s), \mathfrak{r}(s), z)|^{2}\nu(dz)ds.
\end{align*}
Take $\epsilon_{1}=\frac{1}{8\sqrt{2}}$ and $\epsilon_{2}=\frac{1}{8\sqrt{10}}$. Then $C_{\epsilon_{1}}=2\sqrt{2}$ and $C_{\epsilon_{2}}=2\sqrt{10}$. One obtains  from above and \eqref{a priori_1003} that
\begin{align}
&\frac{1}{2}\mathbb{E}\sup_{0\leq t\leq T\wedge\tau_{N}}|{\bf u}_{n}(t)|^{2}+\nu\mathbb{E}\int^{T\wedge\tau_{N}}_{0}\|{\bf u}_{n}(s)\|^{2}ds\nonumber
\\&\leq\mathbb{E}|{\bf u}_{0}|^{2}+\frac{1}{\nu}\mathbb{E}\int^{T}_{0}\|{\bf f}(s)\|_{V'}^{2}ds
+50K\mathbb{E}\int^{T\wedge\tau_{N}}_{0}(1+|{\bf u}_{n}(s)|^{2})ds\nonumber
\\&\leq\mathbb{E}|{\bf u}_{0}|^{2}+\frac{1}{\nu}\mathbb{E}\int^{T}_{0}\|{\bf f}(s)\|^{2}_{V'}ds+\frac{50K}{\nu}C_{T}(1+2KTe^{2KT})
+50KT
:=C_{2}(T),\label{L^{2} sup M for u_{n}}
\end{align}
which implies that $\tau_{N}\wedge T\rightarrow T$ as $N\rightarrow\infty$. Letting $N\rightarrow\infty$ in \eqref{L^{2} sup M for u_{n}}  and \eqref{a priori_1004}, we obtain \eqref{L^2 sup M} and \eqref{L^2 M}.


Define 
\begin{align*}
\tau'_{N}:=\inf\{t\in[0, T]: &|{\bf u}_{n}(t)|^3+\int^{t}_{0}|{\bf u}_{n}(s)|\|{\bf u}_{n}(s)\|^{2}ds>N
\\ \mbox{or}\  &|{\bf u}_{n}(t-)|^{3}+\int^{t}_{0}|{\bf u}_{n}(s)|\|{\bf u}_{n}(s)\|^{2}ds>N\}.
\end{align*}
It follows from the It\^o formula that (see, e.g., \cite[Eq. (16), p. 65]{SPDE in hydrodynamic})
\begin{align}\label{Ito 3}
|{\bf u}_{n}(t)|^{3}&=|{\bf u}_{n}(0)|^{3}
\\&+3\int^{t}_{0}|{\bf u}_{n}(s)|\langle-\nu{\bf A}{\bf u}_{n}(s)-{\bf B}_{k_{\epsilon}}({\bf u}_{n}(s))+{\bf f}(s), {\bf u}_{n}(s),\rangle_{V}ds\nonumber
\\&+3\int^{t}_{0}|{\bf u}_{n}(s)|\langle{\bf u}_{n}(s), \sigma_{n}(s, {\bf u}_{n}(s),\mathfrak{r}(s))dW_{n}(s)\rangle\nonumber
\\&+3\int^{t}_{0}|{\bf u}_{n}(s)|\|\sigma_{n}(s, {\bf u}_{n}(s),\mathfrak{r}(s))\|^{2}_{L_{Q}}ds\nonumber
\\&+\int^{t}_{0}\int_{Z}\Big(|{\bf u}_{n}(s-)+{\bf G}_{n}(s, {\bf u}_{n}(s-),\mathfrak{r}(s), z)|^{3}\nonumber
\\&\qquad\qquad-|{\bf u}_{n}(s-)|^{3}\Big)\tilde{N}_{1}(dz, ds)\nonumber
\\&+\int\Big(|{\bf u}_{n}(s)+{\bf G}_{n}(s, {\bf u}_{n}(s), \mathfrak{r}(s), z)|^{3}-|{\bf u}_{n}(s)|^{3}\nonumber
\\&\qquad-3|{\bf u}_{n}(s)|\big({\bf u}_{n}(s), {\bf G}_{n}(s, {\bf u}_{n}(s), \mathfrak{r}(s), z)\big)_{H}\Big)\nu_{1}(dz)ds,\nonumber
\end{align}
where the last integral is over $[0, t]\times Z$.
Taking integration up to $t\wedge\tau'_{N}$ and then expectation in \eqref{Ito 3}, we obtain
\begin{align}
&\mathbb{E}|{\bf u}_{n}(t\wedge\tau'_{N})|^{3}+3\nu\mathbb{E}\int^{t\wedge\tau'_{N}}_{0}|{\bf u}_{n}(s)|\|{\bf u}_{n}(s)\|^{2}ds\label{a priori_1006}
\\&\leq \mathbb{E}|{\bf u}_{0}|^{3}+3\mathbb{E}\int^{t\wedge\tau'_{N}}_{0}|{\bf u}_{n}(s)|\|{\bf f}(s)\|_{V'}|\|{\bf u}_{n}(s)\|ds\nonumber
\\&+3\mathbb{E}\int^{t\wedge\tau'_{N}}_{0}|{\bf u}_{n}(s)|\|\sigma(s, {\bf u}_{n}(s), \mathfrak{r}(s))\|^{2}_{L_{Q}}ds\nonumber
\\&+\mathbb{E}\int^{t\wedge\tau'_{N}}_{0}\int_{Z}\Big(|{\bf u}_{n}(s)+{\bf G}_{n}(s, {\bf u}_{n}(s), \mathfrak{r}(s), z)|^{3}-|{\bf u}_{n}(s)|^{3}\nonumber
\\&\qquad\qquad\qquad\quad-3|{\bf u}_{n}(s)|\big({\bf u}_{n}(s), {\bf G}_{n}(s, {\bf u}_{n}(s),\mathfrak{r}(s), z)\big)_{H}\Big)\nu(dz)ds.\nonumber
\end{align}

An application of triangle inequality and   Hypothesis $\bf H3$ yields
\begin{align}
&\mathbb{E}\int^{t\wedge\tau'_{N}}_{0}\int_{Z}\Big(|{\bf u}_{n}(s)+{\bf G}_{n}(s, {\bf u}_{n}(s), \mathfrak{r}(s), z)|^{3}-|{\bf u}_{n}(s)|^{3}\label{a priori_1005}
\\&\qquad\qquad\qquad-3|{\bf u}_{n}(s)|\big({\bf u}_{n}(s), {\bf G}_{n}(s, {\bf u}_{n}(s),\mathfrak{r}(s), z)\big)_{H}\Big)\nu(dz)ds\nonumber
\\&\leq 10K\mathbb{E}\int^{t\wedge\tau'_{N}}_{0}|{\bf u}_{n}(s)|^{3}ds+6K\mathbb{E}\int^{t\wedge\tau'_{N}}_{0}|{\bf u}_{n}(s)|^{2}ds\nonumber
\\&\quad+3K\mathbb{E}\int^{t\wedge\tau'_{N}}_{0}|{\bf u}_{n}(s)|ds+KT.\nonumber
\end{align}

It follows from the basic Young inequality and the property $|\cdot|\leq\|\cdot\|$ that
\begin{align}\label{a priori_2001}
&3\|{\bf f}(s)\|_{V'}|{\bf u}_{n}(s)|\|{\bf u}_{n}(s)\|\leq\frac{1}{\nu^{2}}\|{\bf f}(s)\|^{3}_{V'}+2\nu(|{\bf u}_{n}(s)|\|{\bf u}_{n}(s)\|)^{\frac{3}{2}}\nonumber
\\&\leq\frac{1}{\nu^{2}}\|{\bf f}(s)\|^{3}_{V'}+2\nu|{\bf u}_{n}(s)|\|{\bf u}_{n}(s)\|^{2}.
\end{align}

Using  Hypothesis $\bf H1$, \eqref{a priori_1005}, and \eqref{a priori_2001} in \eqref{a priori_1006}, one has
\begin{align}
&\mathbb{E}|{\bf u}_{n}(t\wedge\tau'_{N})|^{3}+\nu\mathbb{E}\int^{t\wedge\tau'_{N}}_{0}|{\bf u}_{n}(s)|\|{\bf u}_{n}(s)\|^{2}ds\label{a priori_1007}
\\&\leq \mathbb{E}|{\bf u}_{0}|^{3}+\frac{1}{\nu^{2}}\mathbb{E}\int^{t\wedge\tau'_{N}}_{0}\|{\bf f}(s)\|^{3}_{V'}ds
+6K\mathbb{E}\int^{t\wedge\tau'_{N}}_{0}|{\bf u}_{n}(s)|^{2}ds\nonumber
\\&\quad+4K\mathbb{E}\int^{t\wedge\tau'_{N}}_{0}|{\bf u}_{n}(s)|ds+KT
+11k\mathbb{E}\int^{t\wedge\tau'_{N}}_{0}|{\bf u}_{n}(s)|^{3}ds.\nonumber
\end{align}
Notice that
$
\int^{t\wedge\tau'_{N}}_{0}|{\bf u}_{n}(s)|ds\leq\int^{t}_{0}|{\bf u}_{n}(s)|ds
$
since $t\wedge\tau'_{N}\leq t$. Thus, by the Schwarz inequality,  the Jensen inequality (for concave functions), the property that $|\cdot|\leq \|\cdot\|$, and \eqref{L^2 M}, we have
\begin{align}
&\mathbb{E}\int^{t\wedge\tau'_{N}}_{0}|{\bf u}_{n}(s)|ds\leq\mathbb{E}\int^{t}_{0}|{\bf u}_{n}(s)|ds\label{1-norm}
\\&\leq\mathbb{E}\Big(\int^{t}_{0}|{\bf u}_{n}(s)|^{2}ds\Big)^{\frac{1}{2}}\sqrt{T}\leq\sqrt{T}\Big(\mathbb{E}\int^{t}_{0}\|{\bf u}_{n}(s)\|^{2}ds\Big)^{\frac{1}{2}}\nonumber;
\end{align}
we also have 
\begin{align}
\mathbb{E}\int^{t\wedge\tau'_{N}}_{0}|{\bf u}_{n}(s)|^{2}ds\leq\mathbb{E}\int^{t}_{0}\|{\bf u}_{n}(s)\|^{2}ds\leq C.\label{2-norm}
\end{align}
Making use of \eqref{1-norm} and \eqref{2-norm} in \eqref{a priori_1007}, we then use the Gronwall inequality to obtain
\begin{align}\label{3-norm}
\mathbb{E}|{\bf u}_{n}(t\wedge\tau'_{N})|^{3}
\leq C(\mathbb{E}|{\bf u}_{0}|^{3}, \mathbb{E}\int^{T}_{0}\|{\bf f}(s)\|^{3}_{V'}ds, \nu, K, T).
\end{align}
Utilizing \eqref{3-norm} on the last term on the right of \eqref{a priori_1007}, we conclude
\begin{align*}
\mathbb{E}|{\bf u}_{n}(t\wedge\tau'_{N})|^{3}+\nu\mathbb{E}\int^{t\wedge\tau_{N}}_{0}|{\bf u}_{n}(s)|\|{\bf u}_{n}(s)\|^{2}ds
\leq C(\mathbb{E}|{\bf u}_{0}|^{3}, \mathbb{E}\int^{T}_{0}\|{\bf f}(s)\|^{3}_{V'}ds, \nu, K, T).\nonumber
\end{align*}

A simplification of the last two terms in \eqref{Ito 3} gives
\begin{align*}
&\int^{t}_{0}\int_{Z}\Big(|{\bf u}_{n}(s-)+{\bf G}_{n}(s, {\bf u}_{n}(s-),\mathfrak{r}(s), z)|^{3}-|{\bf u}_{n}(s-)|^{3}\Big)N_{1}(dz, ds)
\\&+3\int^{t}_{0}\int_{Z}|{\bf u}_{n}(s)|\big|\big({\bf u}_{n}(s), {\bf G}_{n}(s, {\bf u}_{n}(s),\mathfrak{r}(s), z)\big)_{H}\big|\nu_{1}(dz)ds.
\end{align*}
Plugging the result above into \eqref{Ito 3}, we have 
\begin{align}\label{Ito 3'}
|{\bf u}_{n}(t)|^{3}&=|{\bf u}(0)|^{3}
\\&+3\int^{t}_{0}|{\bf u}_{n}(s)|\langle-\nu{\bf A}{\bf u}_{n}(s)-{\bf B}_{k}({\bf u}_{n}(s))+{\bf f}(s), {\bf u}_{n}(s),\rangle_{V}ds\nonumber
\\&+3\int^{t}_{0}|{\bf u}_{n}(s)|\langle{\bf u}_{n}(s), \sigma_{n}(s, {\bf u}_{n}(s),\mathfrak{r}(s))dW_{n}(s)\rangle\nonumber
\\&+3\int^{t}_{0}|{\bf u}_{n}(s)|\|\sigma_{n}(s, {\bf u}_{n}(s),\mathfrak{r}(s))\|^{2}_{L_{Q}}ds\nonumber
\\&+\int\Big(|{\bf u}_{n}(s-)+{\bf G}_{n}(s, {\bf u}_{n}(s-),\mathfrak{r}(s), z)|^{3}-|{\bf u}_{n}(s-)|^{3}\Big)N_{1}\nonumber
\\&-3\int^{t}_{0}\int_{Z}|{\bf u}_{n}(s)|\big({\bf u}_{n}(s), {\bf G}_{n}(s, {\bf u}_{n}(s),\mathfrak{r}(s), z)\big)_{H}\nu_{1}(dz)ds,\nonumber
\end{align}
where the second last integral is over $[0, t]\times Z$ and $N_{1}=N_{1}(dz, ds)$. Taking supremum over $T\wedge\tau'_{N}$ and then expectation on \eqref{Ito 3'}, we have
\begin{align}
&\mathbb{E}\sup_{0\leq t\leq T\wedge\tau'_{N}}|{\bf u}_{n}(t)|^{3}+3\nu\mathbb{E}\int^{T\wedge\tau'_{N}}_{0}|{\bf u}_{n}(s)|\|{\bf u}_{n}(s)\|^{2}ds\label{a priori_1008}
\\&\leq\mathbb{E}|{\bf u}(0)|^{3}+3\mathbb{E}\int^{T\wedge\tau'_{N}}_{0}|{\bf u}_{n}(s)|\langle{\bf f}(s), {\bf u}_{n}(s)\rangle_{V}ds
\nonumber
\\&\quad+\mathbb{E}\sup_{0\leq t\leq T\wedge\tau'_{N}}3\int^{t}_{0}|{\bf u}_{n}(s)|\langle{\bf u}_{n}(s),\sigma(s, {\bf u}_{n}(s), \mathfrak{r}(s))dW_{s}(s)\rangle\nonumber
\\&\quad+3\mathbb{E}\int^{T\wedge\tau'_{N}}_{0}|{\bf u}_{n}(s)|\|\sigma_{n}(s, {\bf u}_{n}(s),\mathfrak{r}(s))\|^{2}_{L_{Q}}ds\nonumber
\\&\quad+\mathbb{E}\int^{T\wedge\tau'_{N}}_{0}\int_{Z}\Big(|{\bf u}_{n}(s-)+{\bf G}_{n}(s, {\bf u}_{n}(s-),\mathfrak{r}(s), z)|^{3}-|{\bf u}_{n}(s-)|^{3}\Big)N_{1}(dz, ds)\nonumber
\\&\quad+3\mathbb{E}\int^{T\wedge\tau'_{N}}_{0}\int_{Z}|{\bf u}_{n}(s)|\big|\big({\bf u}_{n}(s), {\bf G}_{n}(s, {\bf u}_{n}(s),\mathfrak{r}(s), z)\big)_{H}\big|\nu(dz)ds.\nonumber
\end{align}
By the Davis inequality, we have
\begin{align}
&3\mathbb{E}\sup_{0\leq t\leq T\wedge\tau'_{N}}\int^{t}_{0}|{\bf u}_{n}(s)|\langle{\bf u}_{n}(s), \sigma(s, {\bf u}_{n}(s),\mathfrak{r}(s))dW_{n}(s)\rangle\nonumber 
\\&\leq3\sqrt{2}\mathbb{E}\Big\{\Big(\int^{T\wedge\tau'_{N}}_{0}\|\sigma(s, {\bf u}_{n}(s),\mathfrak{r}(s))(|{\bf u}_{n}(s)|{\bf u}_{n}(s))\|^{2}_{L_{Q}}\Big)^{\frac{1}{2}}\Big\}\nonumber
\\&\leq 3\sqrt{2}\mathbb{E}\Big\{\sup_{0\leq t\leq T\wedge\tau'_{N}}|{\bf u}_{n}(t)|^{2}\Big(\int^{T\wedge\tau'_{N}}_{0}\|\sigma(s, {\bf u}_{n}(s), \mathfrak{r}(s))\|^{2}_{L_{Q}}ds\Big)^{\frac{1}{2}}\Big\};\nonumber
\end{align}
invoking the basic Young inequality and Hypothesis $\bf H1$ and continuing, 
\begin{align*}
\leq 3\sqrt{2}\mathbb{E}\Big\{\frac{2}{3}\epsilon\sup_{0\leq t\leq T\wedge\tau'_{N}}|{\bf u}_{n}(t)|^{3}
+\frac{1}{3}C_{\epsilon}\Big(\int^{T\wedge\tau'_{N}}_{0}\|\sigma(s, {\bf u}_{n}(s), \mathfrak{r}(s))\|^{2}_{L_{Q}}ds\Big)^{\frac{3}{2}}\Big\};
\end{align*}
keep simplifying, one reaches
\begin{align*}
&\leq 3\sqrt{2}\mathbb{E}\Big\{\frac{2}{3}\epsilon\sup_{0\leq t\leq T\wedge\tau'_{N}}|{\bf u}_{n}(t)|^{3}
+\frac{1}{3}C_{\epsilon}\sqrt{T}\int^{T\wedge\tau'_{N}}_{0}\|\sigma(s, {\bf u}_{n}(s), \mathfrak{r}(s))\|^{3}_{L_{Q}}ds\Big\}\nonumber
\\&\leq 2\sqrt{2}\epsilon\mathbb{E}\sup_{0\leq t\leq T\wedge\tau'_{N}}|{\bf u}_{n}(s)|^{3}+\sqrt{2}\sqrt{T}K C_{\epsilon}\mathbb{E}\int^{T\wedge\tau'_{N}}_{0}|{\bf u}_{n}(s)|^{3}ds
+\sqrt{2}KT^{\frac{3}{2}}C_{\epsilon}.
\end{align*}
In conclusion, one has
\begin{align}\label{a priori_1009}
&3\mathbb{E}\sup_{0\leq t\leq T\wedge\tau'_{N}}\int^{t}_{0}|{\bf u}_{n}(s)|\langle{\bf u}_{n}(s), \sigma(s, {\bf u}_{n}(s),\mathfrak{r}(s))dW_{n}(s)\rangle\nonumber
\\&\leq 2\sqrt{2}\epsilon\mathbb{E}\sup_{0\leq t\leq T\wedge\tau'_{N}}|{\bf u}_{n}(s)|^{3}+\sqrt{2}\sqrt{T}K C_{\epsilon}\mathbb{E}\int^{T\wedge\tau'_{N}}_{0}|{\bf u}_{n}(s)|^{3}ds
+\sqrt{2}KT^{\frac{3}{2}}C_{\epsilon}.
\end{align}
An application of triangle inequality and expanding the cubic power yields (with the integral on the left side being over $[0, T\wedge\tau'_{N}]$ and  $N_{1}=N_{1}(dz, ds)$.)
\begin{align}
&\mathbb{E}\int\Big(|{\bf u}_{n}(s-)+{\bf G}_{n}(s-, {\bf u}_{n}(s-),\mathfrak{r}(s-), z)|^{3}-|{\bf u}_{n}(s-)|^{3}\Big)N_{1}\nonumber 
\\&\leq 3\mathbb{E}\int^{T\wedge\tau'_{N}}_{0}\int_{Z}|{\bf u}_{n}(s-)|^{2}|{\bf G}_{n}(s-, {\bf u}_{n}(s-),\mathfrak{r}(s-), z)|N_{1}(dz, ds)\nonumber
\\&\quad+3\mathbb{E}\int^{T\wedge\tau'_{N}}_{0}\int_{Z}|{\bf u}_{n}(s-)||{\bf G}_{n}(s-, {\bf u}_{n}(s-),\mathfrak{r}(s-), z)|^{2}N_{1}(dz, ds)\nonumber
\\&\quad+\mathbb{E}\int^{T\wedge\tau'_{N}}_{0}\int_{Z}|{\bf G}_{n}(s, {\bf u}_{n}(s),\mathfrak{r}(s), z)|^{3}N_{1}(dz, ds);\nonumber
\end{align}
invoking Hypothesis $\bf H3$ and continuing, 
\begin{align}
&\leq7K\mathbb{E}\int^{T\wedge\tau'_{N}}_{0}|{\bf u}_{n}(s)|^{3}ds+3K(1+m)\mathbb{E}\int^{T\wedge\tau'_{N}}_{0}|{\bf u}_{n}(s)|^{2}ds\nonumber
\\&\quad+3K\mathbb{E}\int^{T\wedge\tau'_{N}}_{0}|{\bf u}_{n}(s)|ds+KT.\label{1010}
\end{align}
Employing \eqref{a priori_1009} and \eqref{1010} in \eqref{a priori_1008} and then using Hypotheses $\bf H1$ and $\bf H3$ and the basic Young inequality, we have
\begin{align*}
&\mathbb{E}\sup_{0\leq t\leq T\wedge\tau'_{N}}|{\bf u}_{n}(t)|^{3}+\nu\mathbb{E}\int^{T\wedge\tau'_{N}}_{0}|{\bf u}_{n}(s)|\|{\bf u}_{n}(s)\|^{2}ds
\\&\leq \mathbb{E}|{\bf u}(0)|^{3}+\frac{1}{\nu^{2}}\mathbb{E}\int^{T\wedge\tau'_{N}}_{0}\|{\bf f}(s)\|^{3}_{V'}ds
\\&\quad+2\sqrt{2}\epsilon\mathbb{E}\sup_{0\leq t\leq T\wedge\tau'_{N}}|{\bf u}_{n}(s)|^{3}+\sqrt{2}\sqrt{T}K C_{\epsilon}\mathbb{E}\int^{T\wedge\tau'_{N}}_{0}|{\bf u}_{n}(s)|^{3}ds
\\&\quad+\sqrt{2}KT^{\frac{3}{2}}C_{\epsilon}
\\&\quad+3K\mathbb{E}\int^{T\wedge\tau'_{N}}_{0}|{\bf u}_{n}(s)|ds+3K\mathbb{E}\int^{T\wedge\tau_{N}}_{0}|{\bf u}_{n}(s)|^{3}ds
\\&\quad+7K\mathbb{E}\int^{T\wedge\tau'_{N}}_{0}|{\bf u}_{n}(s)|^{3}ds+3K\mathbb{E}\int^{T\wedge\tau'_{N}}_{0}|{\bf u}_{n}(s)|^{2}ds\nonumber
\\&\quad+3K\mathbb{E}\int^{T\wedge\tau'_{N}}_{0}|{\bf u}_{n}(s)|ds+KT\nonumber
\\&\quad+3K\mathbb{E}\int^{T\wedge\tau'_{N}}_{0}|{\bf u}_{n}(s)|^{2}ds+3K\mathbb{E}\int^{T\wedge\tau'_{N}}_{0}|{\bf u}_{n}(s)|^{3}ds.
\end{align*}

Choose $\epsilon=\frac{1}{4\sqrt{2}}$. Then $C_{\epsilon}=4\sqrt{2}$. The inequality above can be simplified as
\begin{align*}
&\frac{1}{2}\mathbb{E}\sup_{0\leq t\leq T\wedge\tau'_{N}}|{\bf u}_{n}(t)|^{3}+\nu\mathbb{E}\int^{T\wedge\tau'_{N}}_{0}|{\bf u}_{n}(s)|\|{\bf u}_{n}(s)\|^{2}ds
\\&\leq C_{1}\mathbb{E}\int^{T\wedge\tau'_{N}}_{0}|{\bf u}_{n}(s)|^{3}ds+C_{2}\mathbb{E}\int^{T\wedge\tau'_{N}}_{0}|{\bf u}_{n}(s)|^{2}ds
\\&\quad+6K\mathbb{E}\int^{T\wedge\tau'_{N}}_{0}|{\bf u}_{n}(s)|ds+(8\sqrt{T}+1)KT,
\end{align*}
where $C_{1}=(13+8\sqrt{T})K$ and $C_{2}=6K$.
Using \eqref{1-norm}, \eqref{2-norm}, and \eqref{3-norm} in above, we conclude, upon a simplification, that
\begin{align*}
&\mathbb{E}\sup_{0\leq t\leq T\wedge\tau'_{N}}|{\bf u}_{n}(t)|^{3}+2\nu\mathbb{E}\int^{T\wedge\tau'_{N}}_{0}|{\bf u}_{n}(s)|\|{\bf u}_{n}(s)\|^{2}ds 
\\&\leq C(\mathbb{E}|{\bf u}(0)|^{3}, \mathbb{E}\int^{T}_{0}\|{\bf f}(s)\|^{3}_{V'}ds, \nu, K, T),
\end{align*}
which leads $T\wedge\tau'_{N}\rightarrow T$ as $N\rightarrow\infty$. Therefore, \eqref{L^3 sup M} is proved.
\end{proof}

\section{The Regularized Equation}\label{regularized}
The proof of Theorem \ref{existence and uniqueness of N-S M} consists of two parts: the existence of a weak solution (Theorem \ref{exist}) and the pathwise uniqueness of weak solutions (Theorem \ref{unique}). Once the weak solution is shown to be pathwise unique, then we apply a well-known result of Yamada and Watanabe \cite{Y-W} to deduce Theorem \ref{existence and uniqueness of N-S M}.

The argument is started by showing the existence of a weak solution; we remind the reader that the parameter $\epsilon$ that appears in ${\bf B}_{k_{\epsilon}}$ is chosen to be \emph{greater than} $0$ and \emph{fixed}.

\subsection{Existence of the solution to the regularized equation}\label{sec exist}
The existence of the weak solution is by studying the martingale problem posed by equation \eqref{N-S M}.  Suppose that ${\omega}^{\dagger}=({\bf u}, \mathfrak{r})$ is a solution to equation \eqref{N-S M}. Then it is not hard to see from the It\^o formula that
\begin{align}\label{mg_switching u}
M^{{\omega}^{\dagger}}(t)
:=F(t, {\bf u}(t), \mathfrak{r}(t))-F(0,{\bf u}(0), \mathfrak{r}(0))-\int^{t}_{0}\mathcal{L}F(s, {\bf u}(s), \mathfrak{r}(s))ds
\end{align}
is a $\mu$-martingale, where $\mu:=\mathcal{P}\circ{{\omega}^{\dagger}}^{-1}$ is the distribution of ${\omega}^{\dagger}$ and $\mathcal{L}$ is the operator introduced \eqref{operator L} with ${\bf B}_{k_{\epsilon}}$ in place of ${\bf B}$.

Recalling \eqref{the path space}, we have defined
$
\Omega^{\dagger}\colonequals\Omega^{\ast}\times\mathcal{D}([0, T]; \mathcal{S}).
$
 Now let $\omega=(u, i)$ be a generic element in $\Omega^{\dagger}$. Substituting $(\bf u, \mathfrak{r})$ by $(u, i)$ in \eqref{mg_switching u}, we obtain canonical expression:
\begin{align}\label{mg_switching}
M^{\omega}(t)
:=F(t, {u}(t), i(t))-F(0,{u}(0), {i}(0))-\int^{t}_{0}\mathcal{L}F(s, {u}(s), i(s))ds.
\end{align}
The aim of this subsection is to identify a measure $\mu$ in the path space $\Omega^{\dagger}$ under which $M(\cdot)$ in \eqref{mg_switching} is a martingale, and this is called \emph{the martingale problem posed by the stochastic Navier-Stokes equation with Markov switching} \eqref{N-S M}. 

Recalling the definition of path space $(\Omega^{\dagger}, \tau^{\dagger})$ from \eqref{the path space}, we let $\mathcal{B}$ denote the Borel $\sigma$-field of the topology $\tau^{\dagger}$. Define
$
\mathcal{F}_{t}:=\sigma(\omega(s): 0\leq s\leq t, \omega\in\Omega^{\dagger}).
$
Recall that $\mathcal{L}$ is the operator introduced in Section \ref{pre}, we are in a position to introduce the definition of a solution to a martingale problem.
\begin{definition}
A probability measure $\mu$ on $(\Omega, \mathcal{B})$ is called a solution of the martingale problem with the initial distribution $\mu_{0}$ and operator $\mathcal{L}$ is the following hold:
\begin{enumerate}
\item The time marginal of $\mu$ at $t=0$ is $\mu_{0}$, i.e., $\mu|_{t=0}=\mu_{0}$.
\item The canonical expression $M(t)$ defined in \eqref{mg_switching} is an $\mathcal{F}_{t}$-martingale.
\end{enumerate}
\end{definition}

Let $X_{t}(\omega)=\omega(t)$ for all $\omega\in\Omega^{\dagger}$ be the canonical process on $\Omega^{\dagger}$. Therefore, in terms of the canonical process, the definition becomes:
\begin{definition}\label{Solution of MG problem}
A process $X=\{{X}_{t}\}$ with path in $(\Omega^{\dagger}, \tau^{\dagger})$ defined on a probability space $(\Omega, \mathcal{F}, \mathcal{P})$ is called a solution to the martingale problem for the initial distribution $\mu_{0}$ and operator $\mathcal{L}$ if the following hold:
\begin{enumerate}
\item The distribution of $X_{0}$ is $\mu_{0}$.
\item For any $F\in\mathcal{D}(\mathcal{L})$, the process \eqref{mg_switching} is a $\mathcal{F}^{X}_{t}$-martingale.
\end{enumerate}
\end{definition}
There are several equivalent formulations of a solution to a martingale problem (see, e.g., \cite{Sundar MHD}), and we introduce one of them in the following lemma. The interested reader is referred to \cite[Prop. 7.1.2]{Sundar book} for more details.
\begin{lemma}\label{7.1.2}
The following statements are equivalent.
\begin{enumerate}
\item $X$ is a solution to the martingale problem for the operator ${\bf \mathcal{L}}$.
\item For all $f\in D({\bf\mathcal{L}})$, $0\leq t_{1}< t_{2}<\cdots<t_{n+1}$, $h_{1}, h_{2},\cdots h_{n}\in C_{b}$, and $n\geq 1$, we have
\begin{align*}
\mathbb{E}\Big\{\Big(f(X_{t_{n+1}})-f(X_{t_{n}})-\int^{t_{n+1}}_{t_{n}}{\bf\mathcal{L}}f(X_{s})ds\Big)\prod_{j=1}^{n}h_{j}(X_{t_{j}})\Big\}=0.
\end{align*}
\end{enumerate}
\end{lemma}

Let $\phi(t, i)$ be a real-valued bounded smooth function with compact support (in each variables). For $\rho\in\mathcal{D}({\bf A})\subseteq V$, $0\leq s\leq t$, and each generic element $\omega=(u, i)\in\Omega^{\dagger}$, define 
\begin{align}\label{M}
&M^{\phi}(t)-M^{\phi}(s)
\\&:=\phi(\langle {u}(t), \rho\rangle_{V}, i(t))-\phi(\langle {u}(s),\rho\rangle_{V}, i(s))\nonumber
\\&-\int^{t}_{s}\sum_{j=1}^{m}\gamma_{i(r-)j}\phi(\langle {u}(r),\rho\rangle_{V}, j)dr\nonumber
\\&-\int^{t}_{s}\Big(\phi^{'}(\langle {u}(r), \rho\rangle_{V}, i(r))\langle -\nu{\bf A}u(r)-{\bf B}_{k_{\epsilon}}({ u}(r))+{\bf f}(r),\rho\rangle_{V}\Big)dr\nonumber
\\&-\frac{1}{2}\int^{t}_{s}\phi^{''}(\langle {u}(r),\rho\rangle_{V}, i(r))\big(\rho, \sigma(r, {u}(r), i(r))Q\sigma^{*}(r, { u}(r), i(r))\rho\big)_{H}dr\nonumber
\\&-\int\Big(\phi(\langle{u}(r)+{\bf G}(r, {u}(r), i(r), z),\rho\rangle_{V}, i(r))-\phi(\langle{u}(r),\rho\rangle_{V}, i(r))\nonumber
\\&\qquad\quad-\phi^{'}(\langle{u}(r),\rho\rangle_{V}, i(r))\cdot\langle{\bf G}(r, {u}(r), i(r), z), \rho\rangle_{V}\Big)\nu(dz)dr,\nonumber
\end{align}
where the last integral is over $[s, t]\times Z$.

According to Lemma \ref{7.1.2}, to show $M^{\phi}(t)$ is a solution to the martingale problem, it suffices to find a Radon measure $\mu$ such that
\begin{align*}
\mathbb{E}^{\mu}\Big(\prod_{j=1}^{m}\psi_{j}(s_{j})(M^{\phi}(t)-M^{\phi}(s))\Big)=0, \ \forall s<s_{1}<\cdots<s_{m}<t, 
\end{align*}
where $\psi_{j}\in C_{b}(\Omega)$ and $\mathcal{F}_{s}$-measurable. 

Define the projection of $M^{\phi}(t)$ as follows:
\begin{align}\label{M_n}
&M^{\phi}_{n}(t)-M^{\phi}_{n}(s)
\\&:=\phi(\langle {u}(t), \rho\rangle_{V}, i(t))-\phi(\langle {u}(s),\rho\rangle_{V}, i(s))\nonumber
\\&-\int^{t}_{s}\sum_{j=1}^{m}\gamma_{i(r-)j}\phi(\langle {u}(r),\rho\rangle_{V}, j)dr\nonumber
\\&-\int^{t}_{s}\Big(\phi^{'}(\langle {u}(r), \rho\rangle_{V}, i(r))\cdot\langle -\nu{\bf A}_{n}u(r)-{\bf B}^{n}_{k_{\epsilon}}({u}(r))+{\bf f}(r),\rho\rangle_{V}\Big)dr\nonumber
\\&-\frac{1}{2}\int\phi^{''}(\langle {u}(r),\rho\rangle_{V}, i(r))\big(\rho, \sigma_{n}(r, {u}(r), i(r))Q\sigma^{*}_{n}(r, {u}(r), i(r))\rho\big)_{H}dr\nonumber
\\&-\int\Big(\phi(\langle{u}(r)+{\bf G}_{n}(r, { u}(r),i(r), z),\rho\rangle_{V}, i(r))-\phi(\langle{u}(r),\rho\rangle_{V}, i(r))\nonumber
\\&\qquad\quad-\phi^{'}(\langle{ u}(r),\rho\rangle_{V}, i(r))\langle{\bf G}_{n}(r, {u}(r),i(r) ,z), \rho\rangle_{V}\Big)\nu(dz)dr,\nonumber
\end{align}
where second last integral is over $[s, t]$, the last integral is over $[s, t]\times Z$, ${\bf A}_{n}=\Pi_{n}{\bf A}$, ${\bf B}^{n}_{k_{\epsilon}}=\Pi_{n}{\bf B}_{k_{\epsilon}}$, $\sigma_{n}=\Pi_{n}\sigma$, ${\bf G}_{n}=\Pi_{n}{\bf G}$, and $\omega=(u, i)\in\Omega^{\dagger}$ is a generic element.

Let $({\bf u}_{n}, \mathfrak{r})$ be the solution to equation \eqref{Galerkin M} and denote by $\mu_{n}$ the (joint) distribution of $({\bf u}_{n}, \mathfrak{r})$. Then it follows from the (finite dimensional) It\^o formula that $M^{\phi}_{n}(t)$ is a $\mu_{n}$-martingale, therefore, for all $n$,
$
\mathbb{E}^{\mu_{n}}\prod_{j=i}^{m}\psi_{j}(s_{j})M^{\phi}_{n}(t)=0, 
$
for all $s<s_{1}<\cdots<s_{m}<t$ and for $\psi_{j}\in C_{b}(\Omega)$ and $\mathcal{F}_{s}$-measurable. Hence,
\begin{align*}
\lim_{n\rightarrow\infty}\mathbb{E}^{\mu_{n}}\prod_{j=i}^{m}\psi_{j}(s_{j})M^{\phi}_{n}(t)=0, \ \forall s<s_{1}<\cdots<s_{m}<t, 
\end{align*}
for $\psi_{j}\in C_{b}(\Omega)$ and $\mathcal{F}_{s}$-measurable.

If we  show that

\begin{enumerate}
\item [$\bf M1.$] there exists a measure $\mu$ such that $\mu_{n}$  weakly converges to $\mu$,
\item [$\bf M2.$] $\lim_{n\rightarrow\infty}M^{\phi}_{n}(t)=M^{\phi}(t)$, and 
\item [$\bf M3.$] $\lim_{n\rightarrow\infty}\mathbb{E}^{\mu_{n}}M^{\phi}(t)=\mathbb{E}^{\mu}M^{\phi}(t)$,
\end{enumerate}
then it follows that $M^{\phi}(t)$ is a $\mu$-martingale.

Now we prove $\bf M1$. Recall that ${\bf u}_{n}$ is the solution to \eqref{Galerkin M} for each $n$.
\begin{lemma}\label{lemma 1}
The sequence $\{{\bf u}_{n}\}_{n=1}^{\infty}$ forms a relative compact set in the Skorohod space $\mathcal{D}([0, T]; V')$.
\end{lemma}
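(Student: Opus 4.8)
The plan is to invoke Aldous' criterion (Lemma \ref{Aldous}) applied to $X_n={\bf u}_n$, whose paths lie in $\mathcal{D}([0,T];V')$ since each ${\bf u}_n$ solves the finite-dimensional equation \eqref{Galerkin M} in $H_n\subset V'$. Two conditions must be checked. First, the compact-containment bound \eqref{Sundar Aldous}: using the continuous embedding $H\hookrightarrow V'$ (so $\|\cdot\|_{V'}\le c|\cdot|$) together with the uniform estimate \eqref{L^2 sup M}, one has $\|{\bf u}_n(t)\|_{V'}\le c\sup_{0\le s\le T}|{\bf u}_n(s)|$ for every $t$, whence by Chebyshev's inequality $\mathcal{P}(\|{\bf u}_n(t)\|_{V'}>N)\le c^2C_2/N^2$, which tends to $0$ as $N\to\infty$ uniformly in $n$. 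Second, and this is where the real work lies, the stopping-time increment condition: for any stopping times $T_n$ with $T_n+\delta_n\le T$ and $\delta_n\downarrow 0$, one must show $\|{\bf u}_n(T_n+\delta_n)-{\bf u}_n(T_n)\|_{V'}\to 0$ in probability.

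To verify the second condition I would write the increment, via the integral form of \eqref{Galerkin M}, as the sum of five pieces over the interval $[T_n,T_n+\delta_n]$: the Stokes term $-\nu\int{\bf A}{\bf u}_n$, the nonlinear term $-\int{\bf B}_{k_{\epsilon}}({\bf u}_n)$, the forcing $\int{\bf f}$, the Wiener integral $\int\sigma_n\,dW_n$, and the compensated Poisson integral $\int\!\!\int{\bf G}_n\,\tilde N_1$ (the Galerkin projections $\Pi_n$ being contractions in $V'$ cause no loss). It then suffices to bound the $V'$-norm, or its square, of each piece in expectation by a constant times a positive power of $\delta_n$, uniformly in $n$; convergence in probability then follows from Markov's inequality.

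For the two stochastic terms the estimate is cleanest. Using $H\hookrightarrow V'$ and the It\^o isometry over $[T_n,T_n+\delta_n]$ (valid by optional sampling), Hypotheses $\bf H1$ and $\bf H3$ with $p=2$ give, for instance, $\mathbb{E}\|\int_{T_n}^{T_n+\delta_n}\sigma_n\,dW_n\|_{V'}^2\le c\,\mathbb{E}\int_{T_n}^{T_n+\delta_n}\|\sigma_n\|_{L_Q}^2\,ds\le cK\,\mathbb{E}\int_{T_n}^{T_n+\delta_n}(1+|{\bf u}_n(s)|^2)\,ds\le cK\delta_n(1+C_2)$, where the factor $\delta_n$ appears by pulling $|{\bf u}_n(s)|^2$ under the supremum bound \eqref{L^2 sup M} over an interval of length $\delta_n$; the Poisson term is handled identically. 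For the Stokes and forcing terms, the boundedness of ${\bf A}\colon V\to V'$ and Cauchy--Schwarz in time yield $\mathbb{E}\int_{T_n}^{T_n+\delta_n}\|{\bf A}{\bf u}_n\|_{V'}\,ds\le c\,\delta_n^{1/2}\bigl(\mathbb{E}\int_0^T\|{\bf u}_n\|^2\,ds\bigr)^{1/2}$, and similarly for ${\bf f}\in L^2(0,T;V')$, both controlled by \eqref{L^2 sup M}.

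The main obstacle is the nonlinear term, because the $V$-norm $\|{\bf u}_n\|$ is only bounded in $L^2$ in time and cannot be pulled under a supremum. Here I would use the interpolated estimate \eqref{b_{k}uvw} with the second argument equal to ${\bf u}_n$, together with $|{\bf w}|\le c\|{\bf w}\|$, to obtain $\|{\bf B}_{k_{\epsilon}}({\bf u}_n)\|_{V'}\le C_{\epsilon}\|{\bf u}_n\|^{3/2}|{\bf u}_n|^{1/2}$, and then apply H\"older in time with exponents $(4/3,4)$ to extract a factor $\delta_n^{1/4}$, namely $\int_{T_n}^{T_n+\delta_n}\|{\bf u}_n\|^{3/2}|{\bf u}_n|^{1/2}\,ds\le\bigl(\sup_s|{\bf u}_n(s)|\bigr)^{1/2}\delta_n^{1/4}\bigl(\int_0^T\|{\bf u}_n\|^2\,ds\bigr)^{3/4}$. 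Taking expectations and using H\"older's inequality in $\omega$, the a priori estimates \eqref{L^2 sup M} and \eqref{L^3 sup M} bound the resulting expectation uniformly in $n$, leaving the vanishing factor $\delta_n^{1/4}$. With all five pieces thus controlled, Aldous' criterion applies, so the laws of $\{{\bf u}_n\}$ are tight in $\mathcal{D}([0,T];V')$, and by Prokhorov's theorem the family is relatively compact, as claimed.
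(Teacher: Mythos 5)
Your proposal is correct, and it shares the paper's overall framework --- both verify the two hypotheses of Aldous' criterion (Lemma \ref{Aldous}) and conclude tightness, hence relative compactness, in $\mathcal{D}([0,T];V')$ --- but your verification of the crucial stopping-time increment condition takes a genuinely different route. The paper stays in the $H$-norm: it bounds $\mathbb{E}|{\bf u}_n(T_n+\delta_n)-{\bf u}_n(T_n)|^2$ in one stroke by applying the It\^o formula to the squared $H$-norm of the increment and invoking Gronwall, arriving at $\bigl(\frac{1}{\nu}\mathbb{E}\int_0^{\delta_n}\|{\bf f}(s)\|_{V'}^2\,ds+2K\delta_n\bigr)e^{2K\delta_n}$, and then uses $\|\cdot\|_{V'}\le|\cdot|$. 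You instead work directly in $V'$: you decompose the increment via the integral form of \eqref{Galerkin M} into five pieces and extract an explicit positive power of $\delta_n$ from each --- $\delta_n$ for the two stochastic integrals via the It\^o/Poisson isometries together with \eqref{L^2 sup M}, $\delta_n^{1/2}$ for the Stokes and forcing terms by Cauchy--Schwarz in time, and $\delta_n^{1/4}$ for the nonlinear term via the interpolation bound \eqref{b_{k}uvw} and H\"older with exponents $(4/3,4)$. Your route is longer but more transparent: every estimate uses only the a priori bounds and is manifestly uniform in $n$, and you never encounter the cross terms (such as $\langle{\bf A}{\bf u}_n(s),{\bf u}_n(T_n)\rangle_{V}$ or $b(k_{\epsilon}{\bf u}_n(s),{\bf u}_n(s),{\bf u}_n(T_n))$) that the paper's It\^o--Gronwall computation in the $H$-norm must absorb but does not display; those terms involve $\|{\bf u}_n(T_n)\|$, which is not pointwise controlled by the a priori estimates, so the paper's terse bound is actually harder to justify than your term-by-term argument. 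What the paper's approach buys, when carried out in full, is a stronger statement (smallness of the increment in $H$ rather than $V'$) with a very short write-up; what yours buys is robustness, at the harmless cost of the weaker but entirely sufficient rate $\delta_n^{1/4}$. One small remark: with H\"older exponents $(4,4/3)$ in $\omega$, your nonlinear term needs only $\mathbb{E}\sup_{0\le t\le T}|{\bf u}_n(t)|^2$ and $\mathbb{E}\int_0^T\|{\bf u}_n(s)\|^2\,ds$, i.e.\ \eqref{L^2 sup M} alone, so the appeal to \eqref{L^3 sup M} is unnecessary (though not wrong).
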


\begin{proof}
It is clear that $\{{\bf u}_{n}\}$ is a subset of $\mathcal{D}([0, T]; V')$.

Let $N>0$. By the Markov inequality, the property that $\|\cdot\|_{V'}\leq |\cdot|$, and \eqref{L^2 M}, we have
\begin{align*}
\mathcal{P}\Big(\|{\bf u}_{n}(t)\|_{V'}>N\Big)\leq\frac{1}{N^{2}}\mathbb{E}\|{\bf u}_{n}(t)\|^{2}_{V'}\leq\frac{1}{N^{2}}\mathbb{E}|{\bf u}_{n}(t)|^{2}\leq\frac{C}{N^{2}}.
\end{align*}
Therefore,
$
\lim_{N\rightarrow\infty}\limsup_{n}\mathcal{P}\Big(\|{\bf u}_{n}(t)\|_{V'}>N\Big)=0.
$

Let $(T_{n}, \delta_{n})$ be a sequence, where $T_{n}$ is a stopping time with $T_{n}+\delta_{n}\leq T$ and $\delta_{n}>0$ with $\delta_{n}\rightarrow 0$. For each $\epsilon>0$, the Chebyshev inequality implies 
\begin{align*}
&\mathcal{P}(\|{\bf u}_{n}(T_{n}+\delta_{n})-{\bf u}_{n}(T_{n})\|_{V'}>\epsilon)
\\&\leq\frac{1}{\epsilon^{2}}\mathbb{E}\|{\bf u}_{n}(T_{n}+\delta_{n})-{\bf u}_{n}(T_{n})\|^{2}_{V'}\leq\frac{1}{\epsilon^{2}}\mathbb{E}|{\bf u}_{n}(T_{n}+\delta_{n})-{\bf u}_{n}(T_{n})|^{2}.
\end{align*}
It follows from the It\^o formula and the Gronwall inequality that
\begin{align*}
\mathbb{E}|{\bf u}_{n}(T_{n}+\delta_{n})-{\bf u}_{n}(T_{n})|^{2}
\leq \Big(\frac{1}{\nu}\mathbb{E}\int^{\delta_{n}}_{0}\|{\bf f}(s)\|^{2}_{V'}ds+2K\delta_{n}\Big)e^{2K\delta_{n}},
\end{align*}
which tends to 0, as $n\rightarrow\infty$. Therefore, 
$\|{\bf u}_{n}(T_{n}+\delta_{n})-{\bf u}_{n}(T_{n})\|_{V'}\rightarrow 0$ 
in probability as $n\rightarrow\infty$. By Aldous' criterion, we conclude that $\{{\bf u}_{n}\}$ is tight in $\mathcal{D}([0, T]; V')$ and thus relative compact in it.
\end{proof}

We can have a even stronger convergence which is proved in the following proposition.

\begin{proposition}\label{cpt}
The sequence $\{{\bf u}_{n}\}_{n=1}^{\infty}$ forms a relative compact set in $L^{2}(0, T; H)$.
\end{proposition}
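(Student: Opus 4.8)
The plan is to deduce the result from the deterministic compactness criterion of Lemma \ref{Metivier cpt} by routing it through tightness of laws. Lemma \ref{lemma 1} has already established that $\{{\bf u}_n\}$ is tight in $\mathcal{D}([0,T]; V')$, and the a priori estimate \eqref{L^2 sup M} supplies the uniform bound $\mathbb{E}\int_0^T \|{\bf u}_n(s)\|^2\,ds \le C_2/\nu$. I would combine these two ingredients to manufacture, for each prescribed level $\eta > 0$, a single set in the intersection space $\Omega^\ast$ that is simultaneously relatively compact in $\mathcal{D}([0,T]; V')$ and bounded in $L^2(0,T;V)$, and which carries the mass of every ${\bf u}_n$ up to $\eta$.

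Concretely, fix $\eta > 0$. First, by Lemma \ref{lemma 1} together with Prokhorov's theorem, there is a compact set $K_1 \subset \mathcal{D}([0,T]; V')$ with $\mathcal{P}({\bf u}_n \in K_1) \ge 1 - \eta/2$ for all $n$. Second, from \eqref{L^2 sup M} and the Markov inequality I can choose $R > 0$ large enough that the closed ball $K_2 := \{v : \|v\|_{L^2(0,T;V)} \le R\}$ satisfies $\mathcal{P}({\bf u}_n \in K_2) \ge 1 - \eta/2$ for every $n$. Setting $B_\eta := K_1 \cap K_2$, continuity of the natural inclusions $\Omega_i \hookrightarrow \Omega_1$ recorded in Definition \ref{the path space of u} ensures that $B_\eta$ is a Borel subset of $\Omega^\ast$, and the complement bound gives $\mathcal{P}({\bf u}_n \in B_\eta) \ge 1 - \eta$ uniformly in $n$.

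Now $B_\eta$ is relatively compact in $\mathcal{D}([0,T];V')$, being a subset of $K_1$, and bounded in $L^2(0,T;V)$, being a subset of $K_2$. Hence Lemma \ref{Metivier cpt} applies with $q = 2$ and yields that the closure $\overline{B_\eta}$ in $L^2(0,T;H)$ is compact. Since $B_\eta \subset \overline{B_\eta}$ we have $\mathcal{P}({\bf u}_n \in \overline{B_\eta}) \ge 1 - \eta$ for all $n$, and as $\eta$ was arbitrary the laws of $\{{\bf u}_n\}$ form a tight family on $L^2(0,T;H)$. By Prokhorov's theorem, $\{{\bf u}_n\}$ is then relatively compact in $L^2(0,T;H)$, as claimed.

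The main obstacle I anticipate is conceptual rather than computational: bridging the deterministic Aubin--Lions statement of Lemma \ref{Metivier cpt} to the probabilistic tightness we actually require. The care lies in verifying that the intersection $K_1 \cap K_2$ is genuinely measurable as a subset of $(\Omega^\ast, \tau)$ and that the $L^2(0,T;H)$-closure produced by Lemma \ref{Metivier cpt} indeed contains $B_\eta$, so that the probability lower bound transfers from $B_\eta$ to the compact set $\overline{B_\eta}$. Both points rest on the continuity of the embeddings in Definition \ref{the path space of u}, and once they are secured the remaining steps are routine invocations of Markov's inequality and Prokhorov's theorem.
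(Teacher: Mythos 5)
Your proof is correct and follows essentially the same route as the paper: both combine the relative compactness (tightness) in $\mathcal{D}([0,T];V')$ from Lemma \ref{lemma 1} with the $L^{2}(0,T;V)$ bound from \eqref{L^2 sup M}, and conclude via the Aubin--Lions-type Lemma \ref{Metivier cpt}. If anything, yours is the more careful rendering: the paper applies the deterministic Lemma \ref{Metivier cpt} directly to the random family, whose $L^{2}(0,T;V)$ boundedness holds only in expectation and whose compactness in $\mathcal{D}([0,T];V')$ holds only at the level of laws, whereas your truncation to the high-probability sets $B_{\eta}=K_{1}\cap K_{2}$ via Markov's inequality supplies exactly the bridge needed to make that application rigorous and to obtain tightness of the laws on $L^{2}(0,T;H)$.
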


\begin{proof}
It follows from \eqref{L^2 sup M} that $\{{\bf u}_{n}\}$ is bounded in $L^{2}(0, T; V)$; also,  we have that
$
\mathbb{E}\int^{T}_{0}|{\bf u}_{n}(t)|^{2}dt\leq\mathbb{E}\int^{T}_{0}\|{\bf u}_{n}(t)\|^{2}dt\leq C,
$
which implies  
$\{{\bf u}_{n}\}\subset L^{2}(0, T; H)\cap \mathcal{D}([0, T]; V')$.
In addition, by Lemma \ref{lemma 1}, $\{{\bf u}_{n}\}$ is relatively compact in the space $\mathcal{D}([0, T]; V')$; the proposition follows from Lemma \ref{Metivier cpt}.
\end{proof}

Recalling Definition \ref{the path space of u} and ${\bf u}_{n}$ being the solution to \eqref{Galerkin M}, we deduce from a priori estimates and Banach-Alaoglu theorem that $\{{\bf u}_{n}\}$ is relatively compact in $(\Omega_{2}, \tau_{2})$ and $(\Omega_{3}, \tau_{3})$. In addition,  Lammta \ref{lemma 1} and \ref{cpt} imply that $\{{\bf u}_{n}\}$ is compact in $(\Omega_{1}, \tau_{1})$ and $(\Omega_{4},  \tau_{4})$, respectively. Therefore, by the Prohorov theorem, the induced distribution $\{\mu^{\ast}_{n}\}$\footnote{$\mu^{\ast}_{n}:=\mathcal{P}\circ{\bf u}_{n}^{-1}$} is tight on each space $(\Omega_{j}, \tau_{j})$ for $j=1, 2, 3, 4$. Hence, by (2) in Definition \ref{the path space of u}, $\{\mu^{\ast}_{n}\}$ is tight on $(\Omega^{\ast}, \tau)$. 

Let $\mu_{n}$ be the joint distribution of $({\bf u}_{n}, \mathfrak{r})$. Then $\{\mu_{n}\}_{n=1}^{\infty}$ is tight on the space $(\Omega^{\dagger}, \tau^{\dagger})$, hence, there exist a subsequence $\{\mu_{n_{\ell}}\}_{\ell=1}^{\infty}$ and a measure $\mu$ such that $\mu_{n_{\ell}}\Rightarrow \mu$. 

Next, we consider $\bf M2$. Recall from Section \ref{pre} that $H_{n}$ is the span of $\{e_{j}\}_{j=1}^{n}$ and $\Pi_{n}$ is a projection operator from $H$ onto $H_{n}$. Denote by $\{n_{\ell}\}_{\ell=1}^{\infty}$ the indices such that $\mu_{n_{\ell}}\Rightarrow\mu$.
\begin{lemma}\label{lemma rho}
For each $\rho\in D({\bf A})$, $\Pi_{n_{\ell}}\rho\rightarrow\rho$ in $V$, as $\ell\rightarrow\infty$.
\end{lemma}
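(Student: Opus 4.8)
The plan is to reduce the statement to a tail estimate for the eigenfunction expansion of $\rho$ with respect to the Stokes operator. Let $\{\lambda_i\}_{i=1}^{\infty}$ denote the eigenvalues of ${\bf A}$ associated with the eigenbasis $\{e_i\}_{i=1}^{\infty}$, so that ${\bf A}e_i=\lambda_i e_i$ with $0<\lambda_1\le\lambda_2\le\cdots$ and $\lambda_i\to\infty$; recall from Section \ref{pre} that $\{e_i\}$ is orthonormal in $H$ and orthogonal in $V$. Since $(e_i,e_j)=\delta_{ij}$ and, by polarizing \eqref{property of Stokes operator}, $((e_i,e_j))=\langle{\bf A}e_i,e_j\rangle_V=\lambda_i(e_i,e_j)=\lambda_i\delta_{ij}$, the normalized family $\{e_i/\sqrt{\lambda_i}\}$ is an orthonormal basis of $V$, and in particular $\|e_i\|^2=\lambda_i$. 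First I would write, for $\rho\in D({\bf A})$, the expansion $\rho=\sum_{i=1}^{\infty}c_i e_i$ with $c_i=(\rho,e_i)$, so that $\Pi_{n}\rho=\sum_{i=1}^{n}c_i e_i$ is exactly the truncation of this series and $\rho-\Pi_n\rho=\sum_{i>n}c_i e_i$.

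Using the $V$-orthogonality of the $e_i$ together with $\|e_i\|^2=\lambda_i$, I would then compute
\begin{align*}
\|\rho-\Pi_n\rho\|^2=\sum_{i>n}\lambda_i c_i^2 .
\end{align*}
The key observation is that $\rho\in D({\bf A})$ gives the stronger summability $\sum_i \lambda_i^2 c_i^2=|{\bf A}\rho|^2<\infty$. Since $\lambda_i\ge\lambda_{n+1}$ for $i>n$, this yields the quantitative bound
\begin{align*}
\|\rho-\Pi_n\rho\|^2=\sum_{i>n}\lambda_i c_i^2\le\frac{1}{\lambda_{n+1}}\sum_{i>n}\lambda_i^2 c_i^2\le\frac{1}{\lambda_{n+1}}|{\bf A}\rho|^2,
\end{align*}
which tends to $0$ as $n\to\infty$ because $\lambda_{n+1}\to\infty$. (Alternatively, even for $\rho\in V$ one has $\sum_i\lambda_i c_i^2=\|\rho\|^2<\infty$, so the tail $\sum_{i>n}\lambda_i c_i^2$ vanishes as the tail of a convergent series; the hypothesis $\rho\in D({\bf A})$ is used only to exhibit the explicit decay rate.) Finally, since $\{n_\ell\}$ is a subsequence of $\mathbb{N}$, convergence of the full sequence $\Pi_n\rho\to\rho$ in $V$ immediately gives $\Pi_{n_\ell}\rho\to\rho$ in $V$.

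The argument is essentially routine, and I do not anticipate a genuine obstacle. The only point requiring care is the identification of $\Pi_n$ (defined as the $H$-orthogonal projection onto $H_n$) with truncation of the eigenexpansion, together with the relation $\|e_i\|^2=\lambda_i$; it is precisely the simultaneous $H$- and $V$-orthogonality of the Stokes eigenbasis that lets one pass from the $H$-projection to a $V$-norm estimate. Once this is in place, the convergence follows from the decay of $\lambda_{n+1}^{-1}|{\bf A}\rho|^2$.
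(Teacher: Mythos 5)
Your proof is correct and follows essentially the same route as the paper's: both identify $\Pi_{n}\rho$ with the truncation of the expansion of $\rho$ in the Stokes eigenbasis, which becomes orthonormal in $V$ after normalization by $\sqrt{\lambda_{j}}$, and conclude that the $V$-norm of the tail vanishes. Your write-up is in fact slightly more careful than the paper's, since you make explicit the identification of the $H$-orthogonal projection with the $V$-truncation and supply the quantitative bound $\lambda_{n+1}^{-1}|{\bf A}\rho|^{2}$, whereas the paper simply appeals to the tail of a convergent series.
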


\begin{proof}
Defining $f_{j}:=\frac{e_{j}}{\sqrt{\lambda_{j}}}$, one sees 
$
\|f_{j}\|=\frac{\|e_{j}\|^{2}}{\lambda_{j}}=\frac{\lambda_{j}}{\lambda_{j}}=1.
$
This implies that $\{f_{j}\}$ is a complete orthonormal basis in $V$. Thus,
\begin{align*}
\rho&=\sum_{j=1}^{\infty}(\rho, f_{j})_{V}f_{j};\quad
\Pi_{n_{\ell}}\rho:=\rho_{n_{\ell}}=\sum_{j=1}^{n_{\ell}}(\rho, f_{j})_{V}f_{j}.
\end{align*}
As a consequence,
$
\|\rho-\Pi_{n_{\ell}}\rho\|=\sum_{j=n_{\ell}+1}^{\infty}(\rho, f_{j})_{V}f_{j}\rightarrow 0,
$
as $\ell$ tends to infinity.
\end{proof}

\begin{lemma}\label{conv of A}
For each $\rho\in\mathcal{D}(\bf A)$, we have
\begin{align*}
\lim_{\ell\rightarrow\infty}\int^{t}_{s}\phi^{'}(\langle{u}(r), \rho\rangle_{V}, i(r))\langle-\nu{\bf A}_{n_{\ell}}{u}(r), \rho\rangle_{V}dr
=\int^{t}_{s}\phi^{'}(\langle{u}(r), \rho\rangle_{V}, i(r))\langle-\nu{\bf A}{u}(r), \rho\rangle_{V}dr,
\end{align*}
\end{lemma}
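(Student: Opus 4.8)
The plan is to transfer the projection $\Pi_{n_\ell}$ hidden inside $\mathbf{A}_{n_\ell}=\Pi_{n_\ell}\mathbf{A}$ off the operator and onto the fixed test function $\rho$, thereby reducing the claim to the convergence $\Pi_{n_\ell}\rho\to\rho$ furnished by Lemma \ref{lemma rho}. Since $\Pi_{n_\ell}\mathbf{A}u(r)\in H_{n_\ell}\subset V\subset H$, for every $r$ one should have
\begin{align*}
\langle-\nu\mathbf{A}_{n_\ell}u(r),\rho\rangle_V
=\langle-\nu\Pi_{n_\ell}\mathbf{A}u(r),\rho\rangle_V
=\langle-\nu\mathbf{A}u(r),\Pi_{n_\ell}\rho\rangle_V .
\end{align*}
The second identity is the main technical point: although $\mathbf{A}u(r)\in V'$ whereas $\Pi_{n_\ell}$ is a priori only the orthogonal projection on $H$, the eigenvectors $e_i$ of the Stokes operator lie in $V$, so $\Pi_{n_\ell}$ extends to $V'$ by $\Pi_{n_\ell}w=\sum_{i=1}^{n_\ell}\langle w,e_i\rangle_V\,e_i$, and expanding both sides in $\{e_i\}$ gives $\langle\Pi_{n_\ell}\mathbf{A}u(r),\rho\rangle_V=\sum_{i=1}^{n_\ell}\langle\mathbf{A}u(r),e_i\rangle_V(e_i,\rho)_H=\langle\mathbf{A}u(r),\Pi_{n_\ell}\rho\rangle_V$.

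With this identity established, fix a generic path $\omega=(u,i)\in\Omega^\dagger$; by construction $u\in\Omega^\ast\subset L^2(0,T;V)$, so $\mathbf{A}u(r)\in V'$ for almost every $r\in[s,t]$. Because $\Pi_{n_\ell}\rho\to\rho$ in $V$ by Lemma \ref{lemma rho}, continuity of the duality pairing yields the pointwise convergence
\begin{align*}
\phi'(\langle u(r),\rho\rangle_V,i(r))\,\langle-\nu\mathbf{A}u(r),\Pi_{n_\ell}\rho\rangle_V
\longrightarrow
\phi'(\langle u(r),\rho\rangle_V,i(r))\,\langle-\nu\mathbf{A}u(r),\rho\rangle_V ,
\end{align*}
where the scalar prefactor is independent of $\ell$ since its first argument $\langle u(r),\rho\rangle_V$ does not involve the projection.

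To pass the limit under the integral I would invoke dominated convergence. As $\phi'$ is bounded (being the derivative of a bounded smooth compactly supported function), and $\|\mathbf{A}u(r)\|_{V'}\le\|u(r)\|$ for the Stokes operator, together with the uniform bound $\|\Pi_{n_\ell}\rho\|\le\|\rho\|$, the integrand is dominated by $\nu\,\|\phi'\|_\infty\,\|\rho\|\,\|u(r)\|$, which lies in $L^1(s,t)$ because $u\in L^2(0,T;V)\subset L^1(0,T;V)$. Dominated convergence then delivers the asserted equality of integrals. The only delicate step is the operator-to-test-function transfer in the first display; once that is in place, the conclusion is a routine combination of Lemma \ref{lemma rho} with dominated convergence.
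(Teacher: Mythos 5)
Your proof is correct and takes essentially the same route as the paper: both transfer the projection from the operator onto the test function via $\langle-\nu{\bf A}_{n_\ell}u(r),\rho\rangle_{V}=\langle-\nu{\bf A}u(r),\Pi_{n_\ell}\rho\rangle_{V}$, invoke Lemma \ref{lemma rho} for the pointwise convergence, and finish with dominated convergence using the bound $\nu\|\phi'\|_{\infty}\|\rho\|\,\|u(r)\|$ together with $u\in L^{2}(0,T;V)\subset L^{1}(0,T;V)$. The only difference is that the paper dismisses the transfer identity as ``a direct computation'' while you justify it by the eigenbasis expansion; this is the same fact made explicit, so no changes are needed.
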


\begin{proof}
A direct computation gives, for almost all $r\in[t, s]$,
\begin{align*}
\langle-\nu{\bf A}_{n_{\ell}}{u}(r), \rho\rangle_{V}=-\nu\langle{\bf A}u(r), \rho_{n_{\ell}}\rangle\rightarrow-\nu\langle{\bf A}u(r), \rho\rangle
\end{align*}
as $\ell\rightarrow\infty$, by Lemma \ref{lemma rho}. In addition, 
\begin{align*}
|\langle-\nu{\bf A}_{n_{\ell}}{u}(r), \rho\rangle_{V}|=|-\nu\langle{\bf A}u(r), \rho_{n_{\ell}}\rangle|\leq\nu\|\rho\|_{V'}\|u(r)\|.
\end{align*}
Notice that $u\in\Omega^{\ast}$, therefore, $u\in L^{2}(0, T; V)\subset L(0, T; V)$. Hence, the lemma follows from the Lebesgue Dominated Convergence Theorem.
\end{proof}

\begin{lemma}\label{conv of B}
For each $\rho\in\mathcal{D}({\bf A})$, we have
\begin{align*}
\lim_{\ell\rightarrow\infty}\int^{t}_{s}\phi^{'}(\langle{ u}(r), \rho\rangle_{V}, i(r))\langle{\bf B}^{n_{\ell}}_{k_{\epsilon}}({u}(r)),\rho\rangle_{V}dr
=\int^{t}_{s}\phi^{'}(\langle{u}(r),\rho\rangle_{V}, i(r))\langle{\bf B}_{k_{\epsilon}}({u}(r)),\rho\rangle_{V}dr
\end{align*}
\end{lemma}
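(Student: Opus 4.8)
The plan is to run the argument for Lemma~\ref{conv of A} almost verbatim, with the Stokes operator replaced by the regularized inertial operator and with the a~priori estimate \eqref{b_{k}uvw} supplying the domination. The first step is to transfer the projection from $\mathbf{B}_{k_\epsilon}$ onto the test function. Recall that $\mathbf{B}^{n_\ell}_{k_\epsilon}=\Pi_{n_\ell}\mathbf{B}_{k_\epsilon}$, where $\Pi_{n_\ell}$ is the orthogonal projection onto $\mathrm{span}\{e_1,\dots,e_{n_\ell}\}$. Since, by the computation in Lemma~\ref{lemma rho}, this subspace is spanned by the $V$-orthonormal system $\{f_j\}$, the operator $\Pi_{n_\ell}$ is simultaneously the orthogonal projection in $V$, and it extends to $V'$ by duality through $\langle\Pi_{n_\ell}g,\rho\rangle_V=\langle g,\Pi_{n_\ell}\rho\rangle_V$. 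Writing $\rho_{n_\ell}:=\Pi_{n_\ell}\rho$, I would therefore record, for a.e.\ $r\in[s,t]$,
\[
\langle\mathbf{B}^{n_\ell}_{k_\epsilon}(u(r)),\rho\rangle_V
=\langle\Pi_{n_\ell}\mathbf{B}_{k_\epsilon}(u(r)),\rho\rangle_V
=\langle\mathbf{B}_{k_\epsilon}(u(r)),\rho_{n_\ell}\rangle_V .
\]

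Next I would establish pointwise (in $r$) convergence of the integrand. For a.e.\ $r\in[s,t]$ one has $u(r)\in V$, so $\mathbf{B}_{k_\epsilon}(u(r))\in V'$ is a fixed functional, while $\rho_{n_\ell}\to\rho$ in $V$ by Lemma~\ref{lemma rho}. Continuity of the duality pairing then gives $\langle\mathbf{B}_{k_\epsilon}(u(r)),\rho_{n_\ell}\rangle_V\to\langle\mathbf{B}_{k_\epsilon}(u(r)),\rho\rangle_V$. Because $\langle u(r),\rho\rangle_V$ does not depend on $\ell$ and $\phi'$ is continuous and bounded, the full integrand converges a.e.\ to the target integrand.

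The third step is the domination needed to pass the limit through the integral. Using $\mathbf{B}_{k_\epsilon}(u)=\mathbf{B}_{k_\epsilon}(u,u)$ and \eqref{b_{k}uvw} with the third slot $\rho_{n_\ell}$,
\[
|\langle\mathbf{B}_{k_\epsilon}(u(r)),\rho_{n_\ell}\rangle_V|
=|b(k_\epsilon u(r),u(r),\rho_{n_\ell})|
\le C_\epsilon\|u(r)\|^{3/2}|u(r)|^{1/2}\|\rho_{n_\ell}\|^{1/2}|\rho_{n_\ell}|^{1/2}.
\]
Since $\Pi_{n_\ell}$ contracts both norms one has $\|\rho_{n_\ell}\|\le\|\rho\|$ and $|\rho_{n_\ell}|\le|\rho|\le\|\rho\|$, and using $|\cdot|\le\|\cdot\|$ the right-hand side is bounded, uniformly in $\ell$, by $C_\epsilon\|\rho\|\,\|u(r)\|^2$. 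Combining this with $\|\phi'\|_\infty$ yields the $\ell$-independent dominating function $C_\epsilon\|\phi'\|_\infty\|\rho\|\,\|u(r)\|^2$, which lies in $L^1(s,t)$ because $u\in\Omega^\ast\subset L^2(0,T;V)$. The Lebesgue dominated convergence theorem then delivers the asserted limit.

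I expect the only real subtlety — rather than a genuine conceptual obstacle — to be the bookkeeping in the domination: one must choose the trilinear estimate so that after collapsing the $H$-norms via $|\cdot|\le\|\cdot\|$ the surviving factor is exactly $\|u(r)\|^2$ (an $L^1$ function in $r$), and one must use that $\Pi_{n_\ell}$ is a genuine $V$-contraction so that the bound is uniform in $\ell$. Both points are secured by the $V$-orthogonality of $\{f_j\}$ noted in Lemma~\ref{lemma rho}, so the proof should present no further difficulty.
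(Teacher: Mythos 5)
Your proposal is correct and takes essentially the same route as the paper: transfer the projection onto the test function via $\langle\mathbf{B}^{n_\ell}_{k_\epsilon}(u(r)),\rho\rangle_{V}=\langle\mathbf{B}_{k_\epsilon}(u(r)),\rho_{n_\ell}\rangle_{V}$, get pointwise convergence from Lemma \ref{lemma rho}, dominate by $\|\phi'\|_{\infty}\|\rho\|\,\|u(r)\|^{2}\in L^{1}(s,t)$ using $u\in L^{2}(0,T;V)$, and conclude by dominated convergence. The only cosmetic difference is that you bound $b(k_\epsilon u(r),u(r),\rho_{n_\ell})$ directly with \eqref{b_{k}uvw}, whereas the paper first swaps the last two slots by antisymmetry \eqref{negative of b} and cites \eqref{b_{k} uvu}; both give the same dominating function.
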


\begin{proof}
A similar argument as in Lemma \ref{conv of A} shows that
\begin{align*}
\langle{\bf B}^{n_{\ell}}_{k_{\epsilon}}({u}(r)),\rho\rangle_{V}\rightarrow\langle{\bf B}_{k_{\epsilon}}({u}(r)),\rho\rangle_{V}
\end{align*}
as $\ell\rightarrow\infty$ for almost all $r\in [s, t]$. In addition
\begin{align*}
|\langle{\bf B}^{n_{\ell}}_{k_{\epsilon}}({u}(r)),\rho\rangle_{V}|=|\langle{\bf B}_{k_{\epsilon}}(u(r)), \rho_{n_{\ell}}\rangle_{V}|\leq \|u(r)\||u(r)|\|\rho_{n_{\ell}}\|
\end{align*}
by \eqref{b_{k} uvu}. Since $u\in\Omega^{\ast}$, $u\in L^{2}(0, T; V)$. Thus, 
\begin{align*}
|\phi^{'}(\langle{u}(r),\rho\rangle_{V}, i)\langle{\bf B}_{k_{\epsilon}}({u}(r)),\rho\rangle_{V}|\leq\|\phi^{`}\|_{\infty}\|\rho\|\|u(r)\|^{2},
\end{align*}
which is an $L^{1}$-function. Therefore, the lemma follows from the Lebesgue Dominated Convergence Theorem.
\end{proof}

\begin{lemma}\label{conv of sigma}
For each $\rho\in\mathcal{D}({\bf A})$,  as $\ell\rightarrow\infty$,
\begin{align*}
\int^{t}_{s}\phi^{''}(\langle{u}(r),\rho\rangle_{V}, i(r))\big(\rho,\sigma_{n_{\ell}}(r,{u}(r),i(r) )Q\sigma^{*}_{n_{\ell}}(r, {u}(r),i(r) )\rho\big)_{H}dr
\end{align*}
converges to 
\begin{align*}
\int^{t}_{s}\phi^{''}(\langle{u}(r),\rho\rangle_{V}, i(r))\big(\rho,\sigma(r, {u}(r),i(r) )Q\sigma^{*}(r, {u}(r),i(r) )\rho\big)_{H}dr.
\end{align*}
\end{lemma}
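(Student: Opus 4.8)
The plan is to reuse verbatim the three-step template of Lemmas \ref{conv of A} and \ref{conv of B}: first establish pointwise-in-$r$ convergence of the integrand along the subsequence $\{n_\ell\}$, then exhibit an $\ell$-independent dominating function belonging to $L^1([s,t])$, and finally invoke the Lebesgue Dominated Convergence Theorem. Since the scalar multiplier $\phi''(\langle u(r),\rho\rangle_V, i(r))$ is \emph{identical} in the two integrands and is bounded by $\|\phi''\|_\infty$, everything reduces to controlling the quadratic form $\big(\rho,\sigma_{n_\ell}Q\sigma_{n_\ell}^{*}\rho\big)_H$.

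The one algebraic point I would isolate is that $\sigma_{n_\ell}=\Pi_{n_\ell}\sigma$ and $\Pi_{n_\ell}$ is an orthogonal projection, hence self-adjoint, so $\sigma_{n_\ell}^{*}=\sigma^{*}\Pi_{n_\ell}$ and
\begin{align*}
\big(\rho,\sigma_{n_\ell}Q\sigma_{n_\ell}^{*}\rho\big)_H
=\big(\rho,\Pi_{n_\ell}\sigma Q\sigma^{*}\Pi_{n_\ell}\rho\big)_H
=\big(\Pi_{n_\ell}\rho,\sigma Q\sigma^{*}\Pi_{n_\ell}\rho\big)_H.
\end{align*}
Because $\{e_j\}$ is an orthonormal basis of $H$, one has $\Pi_{n_\ell}\rho\to\rho$ in $H$ (the $V$-convergence of Lemma \ref{lemma rho} is not even needed here, as all inner products are $H$-inner products). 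The operator $\sigma Q\sigma^{*}=(\sigma Q^{1/2})(\sigma Q^{1/2})^{*}$ satisfies $\|\sigma Q\sigma^{*}\|_{\mathrm{op}}=\|\sigma Q^{1/2}\|_{\mathrm{op}}^{2}\le\|\sigma Q^{1/2}\|_{\mathcal{L}_{2}(H,H)}^{2}=\|\sigma\|_{L_Q}^{2}$. Writing the difference as the telescoping sum
\begin{align*}
\big(\Pi_{n_\ell}\rho,\sigma Q\sigma^{*}\Pi_{n_\ell}\rho\big)_H-\big(\rho,\sigma Q\sigma^{*}\rho\big)_H
=\big(\Pi_{n_\ell}\rho-\rho,\sigma Q\sigma^{*}\Pi_{n_\ell}\rho\big)_H+\big(\rho,\sigma Q\sigma^{*}(\Pi_{n_\ell}\rho-\rho)\big)_H
\end{align*}
and bounding each term by $\|\sigma Q\sigma^{*}\|_{\mathrm{op}}\,|\Pi_{n_\ell}\rho-\rho|\,(|\Pi_{n_\ell}\rho|+|\rho|)\le 2\|\sigma\|_{L_Q}^{2}|\rho|\,|\Pi_{n_\ell}\rho-\rho|$ yields the pointwise convergence for almost every $r\in[s,t]$.

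For the domination I would use the same estimate together with $\|\Pi_{n_\ell}\sigma\|_{L_Q}\le\|\sigma\|_{L_Q}$ and Hypothesis $\bf H1$ with $p=2$ to get
\begin{align*}
\Big|\phi''(\langle u(r),\rho\rangle_V,i(r))\big(\rho,\sigma_{n_\ell}Q\sigma_{n_\ell}^{*}\rho\big)_H\Big|
\le\|\phi''\|_\infty\,|\rho|^{2}\,\|\sigma(r,u(r),i(r))\|_{L_Q}^{2}
\le\|\phi''\|_\infty\,|\rho|^{2}\,K\big(1+|u(r)|^{2}\big),
\end{align*}
whose right-hand side is independent of $\ell$ and, since $u\in\Omega^{\ast}\subset L^{2}(0,T;H)$, lies in $L^{1}([s,t])$; the Lebesgue Dominated Convergence Theorem then closes the argument. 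I expect no structural difficulty: the only care is measure-theoretic bookkeeping, namely confirming that $r\mapsto\|\sigma(r,u(r),i(r))\|_{L_Q}^{2}$ is measurable and integrable (which follows from the joint measurability of $\sigma$ and the $L^{2}(0,T;H)$ regularity of $u$) and that the quantitative link $\|\sigma Q\sigma^{*}\|_{\mathrm{op}}\le\|\sigma\|_{L_Q}^{2}$ is precisely what feeds Hypothesis $\bf H1$. In effect this is the $\sigma$-analog of Lemma \ref{conv of B}, with the projection $\Pi_{n_\ell}$ transferred onto the test element $\rho$ rather than onto the nonlinearity.
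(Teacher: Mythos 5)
Your proof is correct and follows essentially the same route as the paper's: both move the projections $\Pi_{n_{\ell}}$ onto $\rho$ via self-adjointness, rewrite the difference of quadratic forms as two terms controlled by $|\Pi_{n_{\ell}}\rho-\rho|$, and conclude by domination from Hypothesis ${\bf H1}$ together with the Lebesgue Dominated Convergence Theorem. Your write-up is in fact slightly more careful than the paper's, which states the operator-norm bound with $\|\sigma\|_{L_{Q}}$ to the first power (rather than the correct $\|\sigma\|_{L_{Q}}^{2}$ that you use) and invokes the $V$-convergence of Lemma \ref{lemma rho} where, as you observe, $H$-convergence of $\Pi_{n_{\ell}}\rho$ already suffices since all pairings involved are $H$-inner products.
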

\begin{proof}
Through out the proof, we write $\sigma(r)=\sigma(r, {u}(r),i(r) )$ and $(\cdot, \cdot)=\big(\cdot, \cdot\big)_{H}$. A direct computation shows that $
(\rho,\sigma_{n_{\ell}}(r)Q\sigma^{*}_{n_{\ell}}(r)\rho)$ equals to $(\sigma(r)Q\sigma^{\ast}(r)\rho_{n_{\ell}}, \rho_{n_{\ell}})$,
therefore,
\begin{align*}
(\rho,\sigma_{n_{\ell}}(r)Q\sigma^{*}_{n_{\ell}}(r)\rho)-(\rho,\sigma(r)Q\sigma^{*}(r)\rho)
=(\rho_{n_{\ell}},\sigma(r)Q\sigma^{*}(r)\rho_{n_{\ell}})-\big(\rho,\sigma(r)Q\sigma^{*}(r)\rho),
\end{align*}
which implies
\begin{align*}
&|(\rho,\sigma_{n_{\ell}}(r)Q\sigma^{*}_{n_{\ell}}(r)\rho)-\big(\rho,\sigma(r)Q\sigma^{*}(r)\rho)|
\\&\leq|(\rho_{n_{\ell}}, \sigma(r)Q\sigma^{\ast}(r)\rho_{n_{\ell}}-\sigma(r)Q\sigma^{\ast}(r)\rho)|
+|(\rho_{n_{\ell}}-\rho, \sigma(r)Q\sigma^{\ast}(r)\rho)|
\\&\leq |\rho_{n_{\ell}}| |\sigma(r)Q\sigma^{\ast}(r)| |\rho_{n_{\ell}}-\rho|
+|\rho_{n_{\ell}}-\rho| |\sigma(r)Q\sigma^{\ast}(r)||\rho|
\\&=2|\rho| |\rho_{n_{\ell}}-\rho|\|\sigma(r)\|_{L_{Q}}\leq2\|\rho\|\cdot\|\rho_{n_{\ell}}-\rho\|\cdot\|\sigma(r)\|_{L_{Q}}
\end{align*}
Thus, by Lemma \ref{lemma rho}, 
$
(\rho,\sigma_{n_{\ell}}(r)Q\sigma^{*}_{n_{\ell}}(r)\rho)\rightarrow\big(\rho,\sigma(r)Q\sigma^{*}(r)\rho),
$
as $\ell$ approaches  infinity, for all $r\in [s, t]$. In addition, 
\begin{align*}
&|\phi^{''}(\langle{u}(r),\rho\rangle_{V}, i(r))(\rho,\sigma_{n_{\ell}}(r,{u}(r),i(r) )Q\sigma^{*}_{n_{\ell}}(r, {u}(r),i(r) )\rho)|
\\&\leq \|\phi^{''}\|_{\infty}|\rho_{n_{\ell}}|\|\sigma(r)\|_{L_{Q}}\leq\|\phi^{''}\|_{\infty}\cdot\|\rho\|\cdot\|\sigma(r, u(r), i(r))\|_{L_{Q}}.
\end{align*}
Consider
\begin{align*}
\int^{T}_{0}\|\sigma(r, u(r), i(r))\|_{L_{Q}}dr\leq\sqrt{T}\Big(\int^{T}_{0}\|\sigma(r, u(r), i(r))\|^{2}_{L_{Q}}dr\Big)^{\frac{1}{2}}.
\end{align*}
Recall that $u\in\Omega^{\ast}$, therefore, $u\in L^{2}(0, T; H)$; the Hypothesis $\bf H1$ implies
\begin{align*}
\int^{T}_{0}\|\sigma(r, u(r), i(r))\|^{2}_{L_{Q}}dr\leq\int^{T}_{0}K(1+|u(r)|^{2}+i^{2})dr<C
\end{align*}
for a constant $C$. Therefore, we conclude that the function
\begin{align*}
|\phi^{''}(\langle{u}(r),\rho\rangle_{V}, i(r))(\rho,\sigma_{n_{\ell}}(r,{u}(r),i(r) )Q\sigma^{*}_{n_{\ell}}(r, {u}(r),i(r) )\rho)|
\end{align*}
is bounded by an $L^{1}$-function, hence, the lemma follows from the Lebesgue Dominated Convergence Theorem.
\end{proof}
\begin{lemma}\label{conv of G}
For each $\rho\in\mathcal{D}({\bf A})$, as $\ell\rightarrow\infty$
\begin{align*}
&\int^{t}_{s}\int_{Z}\Big(\phi(\langle{u}(r)+{\bf G}_{n_{\ell}}(r, {u}(r),i(r) , z),\rho\rangle_{V}, i(r))-\phi(\langle{u}(r),\rho\rangle_{V}, i(r))
\\&\qquad\qquad-\phi^{'}(\langle{u}(r),\rho\rangle_{V}, i(r))\cdot\langle{\bf G}_{n_{\ell}}(r, {u}(r),i(r) , z),\rho\rangle_{V}\Big)\nu_{1}(dz)dr
\end{align*}
converges to
\begin{align*}
&\int^{t}_{s}\int_{Z}\Big(\phi(\langle{u}(r)+{\bf G}(r, {u}(r), i(r) , z),\rho\rangle_{V}, i)-\phi(\langle{u}(r),\rho\rangle_{V}, i(r))
\\ &\qquad\qquad-\phi^{'}(\langle{u}(r),\rho\rangle_{V}, i(r))\cdot\langle{\bf G}(r, {u}(r), i(r), z),\rho\rangle_{V}\Big)\nu_{1}(dz)dr.
\end{align*}
\end{lemma}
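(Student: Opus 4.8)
The plan is to follow the template established in Lemma \ref{conv of sigma}: first secure pointwise convergence of the integrand in the variables $(r, z)$, then dominate it by a fixed $\nu_{1}\otimes dr$-integrable function that does not depend on $\ell$, and finally invoke the Lebesgue Dominated Convergence Theorem. The essential algebraic observation, exactly as in the previous lemma, is that ${\bf G}_{n_{\ell}}=\Pi_{n_{\ell}}{\bf G}$ and that $\Pi_{n_{\ell}}$ is self-adjoint on $H$, so that for every $\rho\in\mathcal{D}({\bf A})$ one may transfer the projection onto the test function:
\[
\langle {\bf G}_{n_{\ell}}(r, u(r), i(r), z), \rho\rangle_{V}=({\bf G}(r, u(r), i(r), z), \rho_{n_{\ell}})_{H},
\]
where $\rho_{n_{\ell}}:=\Pi_{n_{\ell}}\rho$. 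By Lemma \ref{lemma rho}, $\rho_{n_{\ell}}\to\rho$ in $V$, hence also in $H$.

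First I would establish pointwise convergence. Writing ${\bf G}={\bf G}(r, u(r), i(r), z)$ for brevity, the identity above gives $\langle {\bf G}_{n_{\ell}}, \rho\rangle_{V}=({\bf G}, \rho_{n_{\ell}})_{H}\to({\bf G}, \rho)_{H}=\langle {\bf G}, \rho\rangle_{V}$, and consequently $\langle u(r)+{\bf G}_{n_{\ell}}, \rho\rangle_{V}\to\langle u(r)+{\bf G}, \rho\rangle_{V}$. Since $\phi$ and $\phi'$ are continuous and bounded, each of the three summands in the integrand converges to its unprojected counterpart for every fixed $(r, z)$, the middle summand $\phi(\langle u(r), \rho\rangle_{V}, i(r))$ being independent of $\ell$.

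The decisive step is the domination, and this is where I expect the only genuine difficulty. A termwise bound on the three summands would not obviously produce a $\nu_{1}$-integrable majorant; what rescues the argument is the second-order cancellation built into the integrand. By Taylor's theorem applied to $a\mapsto\phi(a, i(r))$ together with the boundedness of $\phi''$,
\[
\big|\phi(\langle u(r)+{\bf G}_{n_{\ell}}, \rho\rangle_{V}, i(r))-\phi(\langle u(r), \rho\rangle_{V}, i(r))-\phi'(\langle u(r), \rho\rangle_{V}, i(r))\langle {\bf G}_{n_{\ell}}, \rho\rangle_{V}\big|\leq\tfrac{1}{2}\|\phi''\|_{\infty}\big|({\bf G}, \rho_{n_{\ell}})_{H}\big|^{2}.
\]
The Cauchy--Schwarz inequality, combined with $\|\rho_{n_{\ell}}\|\leq\|\rho\|$ and $|\cdot|\leq\|\cdot\|$, then yields the $\ell$-independent majorant $\tfrac{1}{2}\|\phi''\|_{\infty}\|\rho\|^{2}|{\bf G}(r, u(r), i(r), z)|^{2}$.

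Finally, I would verify that this majorant is integrable over $[s, t]\times Z$. Applying Hypothesis ${\bf H3}$ with $p=2$ gives
\[
\int_{Z}|{\bf G}(r, u(r), i(r), z)|^{2}\,\nu_{1}(dz)\leq K\big(1+|u(r)|^{2}\big),
\]
so the majorant integrates to at most $\tfrac{1}{2}\|\phi''\|_{\infty}\|\rho\|^{2}K\int_{s}^{t}(1+|u(r)|^{2})\,dr$, which is finite because $u\in\Omega^{\ast}\subset L^{2}(0, T; H)$. Dominated convergence then delivers the claimed limit and completes the verification of ${\bf M2}$ for the jump term.
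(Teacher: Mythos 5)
Your proposal is correct and shares the paper's overall skeleton: the projection-transfer identity $\langle{\bf G}_{n_{\ell}},\rho\rangle_{V}=({\bf G},\rho_{n_{\ell}})_{H}$ plus Lemma \ref{lemma rho} for pointwise convergence, then an $\ell$-independent majorant and the Lebesgue Dominated Convergence Theorem. Where you genuinely differ is the domination step, and your motivation for that step is mistaken: you assert that a termwise bound on the three summands ``would not obviously produce a $\nu_{1}$-integrable majorant,'' but that is exactly what the paper does. By the mean value theorem, $|\phi(\langle u+{\bf G}_{n_{\ell}},\rho\rangle_{V},i)-\phi(\langle u,\rho\rangle_{V},i)|\leq\|\phi'\|_{\infty}\|\rho\|\,|{\bf G}(r,z)|$, and adding the same bound for the term $\phi'(\langle u,\rho\rangle_{V},i)\langle{\bf G}_{n_{\ell}},\rho\rangle_{V}$ gives the majorant $2\|\phi'\|_{\infty}\|\rho\|\,|{\bf G}(r,z)|$; this is integrable because Hypothesis ${\bf H3}$ is assumed for $p=1$ as well, so $\int_{Z}|{\bf G}(r,u(r),i(r),z)|\,\nu_{1}(dz)\leq K(1+|u(r)|)$, which is integrable in $r$ since $u\in\Omega^{\ast}$. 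Your alternative---the second-order Taylor remainder $\tfrac{1}{2}\|\phi''\|_{\infty}\|\rho\|^{2}|{\bf G}(r,z)|^{2}$ combined with ${\bf H3}$ at $p=2$---is equally valid under the stated hypotheses, and it is in fact the more robust estimate: it is the one that would survive in a generic L\'evy-type setting where the jump coefficient has only second moments with respect to $\nu_{1}$, whereas the paper's first-order bound exploits the (stronger) first-moment condition built into ${\bf H3}$. The price you pay is needing $\phi''$ bounded, which the paper's proof of this lemma never invokes. Either route completes the verification of ${\bf M2}$ for the jump term.
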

\begin{proof}
Through out this proof, we write ${\bf G}(r, z)={\bf G}(r, {u}(r),i(r) , z)$, $\langle\cdot, \cdot\rangle=\langle\cdot, \cdot\rangle_{V}, u=u(r)$, and $i=i(r)$.

It follows from Lemma \ref{lemma rho} that
$
\langle{\bf G}_{n_{\ell}}(r, z),\rho\rangle=\langle{\bf G}(r , z), \rho_{n_{\ell}}\rangle,
$
which converges to
$
\langle{\bf G}(r, z), \rho\rangle,
$
as $\ell\rightarrow\infty$, for all $r\in[s, t]$. Therefore, the convergence of the integrand is shown. Next we argue that the integrand is bounded by an $L^{1}$-function, and thus the lemma follows from the the Lebesgue Dominated Convergence Theorem.

For a fixed $\ell$, writing
$a=\langle{u},\rho\rangle$ and
$b=\langle{u}+{\bf G}_{n_{\ell}}(r, z),\rho\rangle, $
we deduce from the mean value theorem that
$
\phi(\langle{u}+{\bf G}_{n_{\ell}}(r, z),\rho\rangle, i)-\phi(\langle{u}, \rho\rangle, i)
=\phi'(c, i)\langle{\bf G}_{n_{\ell}}(r, z),\rho\rangle,
$
where $c\in (a, b)$. Therefore, 
\begin{align*}
|\phi(\langle{u}+{\bf G}_{n_{\ell}}(r, z),\rho\rangle, i)-\phi(\langle{u},\rho\rangle, i)|
\leq \|\phi^{'}\|_{\infty}|\rho_{n_{\ell}}| |{\bf G}(r, z)|,
\end{align*}
which implies
\begin{align*}
&\Big|\Big(\phi(\langle{u}+{\bf G}_{n_{\ell}}(r , z),\rho\rangle, i)-\phi(\langle{u},\rho\rangle, i)
-\phi^{'}(\langle{u},\rho\rangle, i)\cdot\langle{\bf G}_{n_{\ell}}(r, z),\rho\rangle\Big)\Big|
\\&\leq |\phi(\langle{u}+{\bf G}_{n_{\ell}}(r, z),\rho\rangle, i)-\phi(\langle{u},\rho\rangle, i)|+|\phi^{'}(\langle{u},\rho\rangle, i)\langle{\bf G}_{n_{\ell}}(r, z),\rho\rangle|
\\&\leq 2\|\phi^{'}\|_{\infty}\|\rho\||{\bf G}(r, z)|.
\end{align*}
By Hypothesis $\bf H3$, $\int_{Z}|{\bf G}(r, z)|\nu(dz)$ is an $L^{1}$-function, therefore, the proof is complete.
\end{proof}

In light of Lemmata \ref{lemma rho} to \ref{conv of G}, $\bf M2$ has been proved. Moreover, as shown in the proofs of Lemmata \ref{lemma rho} to \ref{conv of G}, the expectation and limit is exchangeable, i.e., 
$
\lim_{n\rightarrow\infty}\mathbb{E}M^{\phi}_{n}(t)=\mathbb{E}\lim_{n\rightarrow\infty}M^{\phi}_{n}(t).
$

Lastly, we consider $\bf M3$. Clearly, if the assumption of Lemma \ref{prop conv} is fulfilled, then $\bf M3$ is obtained. So far, we have a sequence of measures $\{\mu_{n}\}_{n=1}^{\infty}$, and there exists a measure $\mu$ such that $\mu_{n_{\ell}}\Rightarrow\mu$ as $\ell\rightarrow\infty$. Therefore, it remains to prove that
\begin{enumerate}
\item $M^{\phi}(t)$ is continuous on $(\Omega^{\dagger}, \tau^{\dagger})$, and
\item  for some $\delta>0$, $\sup_{n}\mathbb{E}^{\mu_{n}}\Big(|M^{\phi}(t)|^{1+\delta}\Big)<C$, where $C$ is a constant.
\end{enumerate}

We begin the proof of continuity of $M^{\phi}(t)$ with the following auxiliary lemma.
\begin{lemma}\label{lemma phi}
Let $\{u_{n}\}_{n=1}^{\infty}$ and $u$ be members of $(\Omega^{\ast}, \tau)$ with $u_{n}\rightarrow u$ as $n\rightarrow\infty$ in $\tau$-topology. For almost all $t\in[0, T]$, $k=0, 1, 2$, and each $i\in\mathcal{S}$, we have 
\begin{align*} 
\frac{d^{k}}{dx^{k}}\phi(\langle {u}_{n}(t), \rho\rangle_{V}, i)\rightarrow\frac{d^{k}}{dx^{k}}\phi(\langle{ u}(t),\rho\rangle_{V}, i),
\end{align*}
as $n\rightarrow\infty$.
\end{lemma}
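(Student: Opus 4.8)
The plan is to reduce the statement to the scalar convergence $\langle u_n(t),\rho\rangle_V \to \langle u(t),\rho\rangle_V$ for almost every $t$, and then to let the smoothness of $\phi$ do the rest. Since $\rho\in\mathcal{D}(\mathbf{A})\subseteq V$ is fixed and the duality pairing $\langle\cdot,\rho\rangle_V\colon V'\to\mathbb{R}$ is continuous, it suffices to prove that $u_n(t)\to u(t)$ in $V'$ for almost all $t\in[0,T]$. To obtain this, the first thing I would do is pass from the hypothesis $u_n\to u$ in the supremum topology $\tau$ to convergence in the coarser topology $\tau_1$: because $\tau$ is finer than each $\tau_i$, a sequence that converges in $\tau$ converges in every $\tau_i$, and in particular $u_n\to u$ in the $J$-topology of $\mathcal{D}([0,T];V')$.

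Next comes the key step, which uses that $J$-convergence yields pointwise convergence at continuity points of the limit path. Concretely, $u_n\to u$ in the Skorohod $J$-topology provides increasing homeomorphisms $\lambda_n$ of $[0,T]$ with $\sup_t|\lambda_n(t)-t|\to 0$ and $\sup_t\|u_n(\lambda_n(t))-u(t)\|_{V'}\to 0$. Fixing a continuity point $t_0$ of $u$ and setting $s_n:=\lambda_n^{-1}(t_0)$, one has $s_n\to t_0$, so that $\|u_n(t_0)-u(s_n)\|_{V'}\le \sup_t\|u_n(\lambda_n(t))-u(t)\|_{V'}\to 0$, while $u(s_n)\to u(t_0)$ by continuity; hence $u_n(t_0)\to u(t_0)$ in $V'$. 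Since $u\in\mathcal{D}([0,T];V')$ is c\`adl\`ag, its set of discontinuities is at most countable and therefore Lebesgue-null, so $u_n(t)\to u(t)$ in $V'$ for almost all $t\in[0,T]$.

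To finish, fix such a $t$ and an $i\in\mathcal{S}$. Continuity of the pairing then gives $\langle u_n(t),\rho\rangle_V\to\langle u(t),\rho\rangle_V$. Because $\phi(\cdot,i)$ is smooth, its derivatives $\phi^{(k)}(\cdot,i)$ for $k=0,1,2$ are continuous on $\mathbb{R}$, and composing these with the convergent scalars yields $\frac{d^k}{dx^k}\phi(\langle u_n(t),\rho\rangle_V,i)\to\frac{d^k}{dx^k}\phi(\langle u(t),\rho\rangle_V,i)$ for almost all $t$, which is exactly the assertion.

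The hard part, and the only genuinely delicate point, will be the almost-everywhere pointwise convergence $u_n(t)\to u(t)$ in $V'$. One is tempted to read it off from $\tau_4$ (strong convergence in $L^2(0,T;H)$), but that route only delivers a.e.\ convergence \emph{along a subsequence}, which is weaker than the full-sequence statement required here. The $J$-topology is the right tool precisely because it upgrades to full-sequence convergence at every continuity point of the limit, and c\`adl\`ag regularity guarantees that these continuity points form a set of full measure.
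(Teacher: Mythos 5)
Your proposal is correct and follows essentially the same route as the paper: pass from $\tau$-convergence to $J$-convergence in $\mathcal{D}([0,T];V')$, use the fact that Skorohod convergence gives pointwise $V'$-convergence at continuity points of the limit, note that a c\`adl\`ag path has at most countably many discontinuities, and conclude via continuity of the pairing with $\rho$ and smoothness of $\phi$. The only difference is one of detail: the paper cites Billingsley for the pointwise-convergence-at-continuity-points step, whereas you prove it explicitly via the time-change homeomorphisms $\lambda_n$; your closing remark about why $\tau_4$ alone would only give subsequential a.e.\ convergence correctly identifies why the $J$-topology is the right tool here.
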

\begin{proof}
Denote
$
C({u}):=\{t\in[0, T]; \mathcal{P}\big({u}(t)={u}(t-)\big)=1\}.
$
It is known that the complement of $C({u})$ is at most countable (see, e.g., \cite{Billingsley}). Therefore, for almost all $t\in[0, T]$, one has
$
{u}_{n}(t)\rightarrow{u}(t), 
$
as $n\rightarrow\infty$. This further implies that
$
\langle{u}_{n}(t),\rho\rangle_{V}\rightarrow\langle{u}(t), \rho \rangle_{V}
$
, as $n\rightarrow\infty$ for any $\rho\in\mathcal{D}({\bf A})$. Therefore, the lemma follows from the smoothness of the function $\phi$.
\end{proof}

\begin{lemma}\label{lemma conti}
$M^{\phi}(t)$ is continuous in the $(\tau^{\dagger})$-topology.
\end{lemma}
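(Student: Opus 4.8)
The plan is to verify sequential continuity: given $(u_n, i_n) \to (u,i)$ in $\tau^{\dagger}$, I will show that each of the six terms that make up $M^{\phi}(t)$ in \eqref{M} converges to the corresponding term for $(u,i)$. Since $\tau$ refines $\tau_4$, convergence in $\tau$ forces $u_n \to u$ strongly in $L^{2}(0,T;H)$ (in particular $\sup_n\|u_n\|_{L^{2}(0,T;H)}<\infty$), so after passing to a subsequence $u_n(r) \to u(r)$ in $H$ for a.e.\ $r$; and the limit $u$, lying in $\Omega^{\ast}\subset L^{\infty}(0,T;H)$, is essentially bounded in $H$. Convergence $i_n \to i$ in the $J$-topology on $\mathcal{D}([0,T];\mathcal{S})$, together with the discreteness of $\mathcal{S}$ and the finiteness of the jump set of $i$, gives $i_n(r)=i(r)$ eventually for every $r$ off a finite set. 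For the two boundary terms this is exactly Lemma \ref{lemma phi} (applied to $\phi$, together with $i_n(t)\to i(t)$), so those converge for all $t,s$ in the co-countable set of common continuity points; this is the usual reduction under which the martingale identity is checked.

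For the four time-integrals I will use a single device: write each integrand difference as a sum of a term carrying the $H$-increment $u_n-u$ and a term carrying the increment of the $\phi$-factor with $u$ frozen. The first kind is estimated by Cauchy--Schwarz and made to vanish directly from $\|u_n-u\|_{L^{2}(0,T;H)}\to 0$, using the multilinear/Lipschitz structure of the coefficients: for the Stokes term via $\langle \mathbf{A}u_n(r),\rho\rangle_V=(u_n(r),\mathbf{A}\rho)_H$ with $\mathbf{A}\rho\in H$; for $\sigma$ and $\mathbf{G}$ via the Lipschitz Hypotheses $\mathbf{H2}$ and $\mathbf{H4}$ (on the set where $i_n=i$, the complement being $\mathfrak{L}$-null). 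The second kind has $u$ frozen, so $\phi^{(k)}(\langle u_n(r),\rho\rangle_V,i_n(r))\to\phi^{(k)}(\langle u(r),\rho\rangle_V,i(r))$ a.e.\ by Lemma \ref{lemma phi}, while the accompanying factor admits a fixed $L^{1}$ majorant built from $u\in L^{\infty}(0,T;H)$ and the growth Hypotheses $\mathbf{H1}$, $\mathbf{H3}$ (precisely the majorants of Lemmata \ref{conv of sigma} and \ref{conv of G}); the Lebesgue Dominated Convergence Theorem then applies. The $\mathbf{f}$-term has no $u_n$-dependence in its kernel and is pure second kind.

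The only term needing the regularization is the nonlinear one. I will split
\begin{align*}
b(k_{\epsilon}u_n,u_n,\rho)-b(k_{\epsilon}u,u,\rho)
=b(k_{\epsilon}(u_n-u),u_n,\rho)+b(k_{\epsilon}u,u_n-u,\rho),
\end{align*}
and use \eqref{negative of b} to move the variable argument out of the gradient, e.g.\ $b(k_{\epsilon}(u_n-u),u_n,\rho)=-\sum_{i,j}\int_G(\eta_{\epsilon}\ast(u_n-u))_i(\partial_i\rho_j)(u_n)_j\,dx$. Since $\eta_{\epsilon}\in L^{2}$, the Young convolution inequality gives $\|\eta_{\epsilon}\ast v\|_{\infty}\le\|\eta_{\epsilon}\|_{2}\,|v|$, so both pieces are bounded by $\|\eta_{\epsilon}\|_{2}\|\rho\|\,|u_n-u|\,(|u_n|+|u|)$; integrating and applying Cauchy--Schwarz yields a bound $\lesssim\|u_n-u\|_{L^{2}(0,T;H)}\bigl(\|u_n\|_{L^{2}(0,T;H)}+\|u\|_{L^{2}(0,T;H)}\bigr)\to 0$. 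The frozen-$u$ piece is dominated by the constant $\|\phi'\|_{\infty}\|\eta_{\epsilon}\|_{2}\|\rho\|\,\|u\|^{2}_{L^{\infty}(0,T;H)}$ and handled by dominated convergence. This is exactly where the mollification is indispensable: it lets one slot the mollified factor into $L^{\infty}$ and thereby reduce the nonlinear term to the strong $L^{2}(0,T;H)$-convergence supplied by $\tau_4$, whereas the bare $\mathbf{B}$ would demand control unavailable from $\tau$.

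Finally, since every estimate above was performed along an a.e.-convergent subsequence, I upgrade to convergence of the full sequence by the subsequence principle: any subsequence has a further subsequence along which $u_n(r)\to u(r)$ a.e., and the computed limit is in each case the term of $M^{\phi}(t)(u,i)$; hence $M^{\phi}(t)(u_n,i_n)\to M^{\phi}(t)(u,i)$. The main obstacle I anticipate is twofold: carrying the Skorohod $J$-topology bookkeeping for the chain (fixed-time evaluation is $J$-continuous only at continuity points of the limit, and jump times must be discarded as a null set inside the coefficient estimates), and establishing continuity of $v\mapsto b(k_{\epsilon}v,v,\rho)$ on $H$ — both resolved by the splittings above.
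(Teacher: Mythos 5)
Your proof is correct, and its skeleton is the same as the paper's: check sequential continuity term by term, obtain a.e.-pointwise convergence of the integrands (Lemma \ref{lemma phi}), and pass to the limit under the time integral, with the strong $L^{2}(0,T;H)$ convergence supplied by $\tau_{4}$ doing the real work. But your key estimates differ from the paper's in three substantive places. For the nonlinear term the paper keeps the decomposition $b(k_{\epsilon}u_{n},u_{n}-u,\rho)+b(k_{\epsilon}(u_{n}-u),u,\rho)$ and estimates it by the interpolation inequality \eqref{b_{k}uvw}, so it must use boundedness of $\{u_{n}\}$ in $L^{2}(0,T;V)$ in addition to strong convergence in $L^{2}(0,T;H)$; you instead use \eqref{negative of b} to move the varying factor out of the gradient slot and the Young inequality $\|\eta_{\epsilon}\ast v\|_{\infty}\leq\|\eta_{\epsilon}\|_{2}\,|v|$, which reduces everything to $H$-norms alone and makes explicit that the mollification is exactly what buys the continuity. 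For the jump term the paper's proof starts from the assertion that ${\bf G}$ is continuous in all of its components --- an assumption not contained in Hypotheses ${\bf H}$ --- and then argues by uniform integrability; your route through the Lipschitz hypothesis ${\bf H4}$, paired with the growth hypothesis ${\bf H3}$ by the Cauchy--Schwarz inequality in $z$, stays within the stated hypotheses (the analogous remark applies to your use of ${\bf H2}$ for the Wiener term, where you and the paper essentially agree). Finally, you let the chain coordinate vary, $i_{n}\rightarrow i$ in the $J$-topology, whereas the paper freezes it, saying there is ``no convergence issue in $\mathfrak{r}$''; since the lemma asserts continuity on the product $(\Omega^{\dagger},\tau^{\dagger})$, your treatment is the more complete one. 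One small slip there: for fixed $n$ the set $\{r\colon i_{n}(r)\neq i(r)\}$ is not $\mathfrak{L}$-null, only of measure tending to $0$ as $n\rightarrow\infty$; but since $|u_{n}|^{2}\rightarrow|u|^{2}$ in $L^{1}(0,T)$, the family $\{|u_{n}|^{2}\}$ is uniformly integrable, so the contribution of this exceptional set vanishes in the limit and the argument repairs itself at the cost of one sentence.
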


\begin{proof}
It suffices to prove that $M^{\phi}(t)$ is continuous in the $\tau$-topology since there is no convergence issue in $\mathfrak{r}$.

Let $\{u_{n}\}$ and $u$ be members of $(\Omega^{\ast}, \tau)$ with $u_{n}\rightarrow u$ as $n\rightarrow\infty$ in $\tau$-topology. Let $M^{\phi}(u_{n}(t))$ be the function where $u_{n}$ is in place of $u$ in \eqref{M}. Given $u_{n}\rightarrow u$. We need to show that $\lim_{n\rightarrow\infty}M^{\phi}(u_{n}(t))=M^{\phi}(u(t))$, and we prove it by taking the term-by-term limit.

The first three terms follows from Lemma \ref{lemma phi} and the Bounded Convergence Theorem.

From now on, we write $u=u(r), u_{n}=u_{n}(r), i=i(r), (\cdot, \cdot)=\big(\cdot, \cdot\big)_{H}$, and $\langle\cdot, \cdot\rangle=\langle\cdot, \cdot\rangle_{V}$.

For the $\bf A$ term, it is not hard to see that 
$\langle{\bf A}u_{n}, \rho\rangle$ equals to $\langle u_{n}, {\bf A}\rho\rangle$ which converges to $\langle u, {\bf A}\rho\rangle$,
as $n\rightarrow\infty$, for almost all $r\in [s, t]$ by Lemma \ref{lemma phi}. Consider
$
|\langle{\bf A}u_{n}, \rho\rangle|^{2}\leq\|u_{n}\|^{2}\|\rho\|^{2}_{V'},
$
and
$
\int^{T}_{0}\|u_{n}\|^{2}\|\rho\|^{2}_{V'}dr
\leq\|\rho\|^{2}_{V'}\int^{T}_{0}\|u_{n}\|^{2}dr<C
$
for all $n$ and a constant $C$ since $u_{n}\rightarrow u$ in $\tau$-topology and thus in $\tau_{2}$. This implies that
$
\sup_{n}\int^{T}_{0}|\langle{\bf A}u_{n}, \rho\rangle|^{2}dr<\infty
$
so that $\{\langle{\bf A}u_{n}, \rho\rangle\}$ is uniformly integrable, and hence
\begin{align*}
\lim_{n\rightarrow\infty}\int^{t}_{s}\langle-\nu{\bf A}u_{n}, \rho\rangle dr=\int^{t}_{s}\langle-\nu{\bf A}u, \rho\rangle dr.
\end{align*}

For the ${\bf B}_{k_{\epsilon}}$ term, using the definition of ${\bf B}_{k_{\epsilon}}$, we have
\begin{align*}
|\langle{\bf B}_{k_{\epsilon}}(u_{n}), \rho\rangle-\langle{\bf B}_{k_{\epsilon}}(u, \rho\rangle|
\leq |b(k_{\epsilon}u_{n}, u_{n}-u, \rho)|+|b(k_{\epsilon}(u_{n}-u), u, \rho)|,
\end{align*}
which together with \eqref{b_{k}uvw} further imply
\begin{align*}
|\langle{\bf B}_{k_{\epsilon}}(u_{n}), \rho\rangle-\langle{\bf B}_{k_{\epsilon}}(u), \rho\rangle|
\leq2\|\rho\|\|u_{n}\|\|u_{n}-u\|^{\frac{1}{2}}|u_{n}-u|^{\frac{1}{2}}.
\end{align*}
Therefore, the Schwarz inequality implies (with $C=2\|\rho\|$)
\begin{align*}
&\int^{T}_{0}|\langle{\bf B}_{k_{\epsilon}}(u_{n}), \rho\rangle-\langle{\bf B}_{k_{\epsilon}}(u), \rho\rangle|dr
\\&\leq C\|\Big(\int^{T}_{0}\|u_{n}\|^{2}dr\Big)^{\frac{1}{2}}\Big(\int^{T}_{0}\|u_{n}(r)-u(r)\||u_{n}-u|dr\Big)^{\frac{1}{2}}
\\&\leq C\|\Big(\int^{T}_{0}\|u_{n}\|^{2}dr\Big)^{\frac{1}{2}}
\Big(\int^{T}_{0}\|u_{n}\||u_{n}-u|dr+\int^{T}_{0}\|u\||u_{n}-u|dr\Big)^{\frac{1}{2}}
\\&\leq C\|\Big(\int^{T}_{0}\|u_{n}\|^{2}dr\Big)^{\frac{1}{2}}
\\&\quad\cdot\Big\{\Big(\int^{T}_{0}\|u_{n}\|^{2}dr\Big)^{\frac{1}{2}}\Big(\int^{T}_{0}|u_{n}-u|^{2}dr\Big)^{\frac{1}{2}}
+\Big(\int^{T}_{0}\|u\|^{2}dr\Big)^{\frac{1}{2}}\Big(\int^{T}_{0}|u_{n}-u|^{2}dr\Big)^{\frac{1}{2}}\Big\}^{\frac{1}{2}}.
\end{align*}
Since $u_{n}$ and $u$ are members of $\Omega^{\ast}$, the $L^{2}(0, T; V)$-norms are finite. In addition, $u_{n}\rightarrow u$ in $\tau$-topology implies that $u_{n}\rightarrow u$ in $\tau_{4}$ (the strong topology in $L^{2}(0, T; H)$). Hence, we conclude that
\begin{align*}
\lim_{n\rightarrow\infty}\int^{T}_{0}|\langle{\bf B}_{k_{\epsilon}}(u_{n}), \rho\rangle-\langle{\bf B}_{k_{\epsilon}}(u), \rho\rangle|dr=0,
\end{align*}
which implies
$$
\lim_{n\rightarrow\infty}\int^{t}_{s}\langle{\bf B}_{k_{\epsilon}}(u_{n}), \rho\rangle dr=\int^{t}_{s}\langle{\bf B}_{k_{\epsilon}}(u), \rho\rangle dr.
$$

For the term represents the continuous noise, consider
\begin{align*}
&\Big|(\rho, \sigma(r, u_{n}, i)Q\sigma^{\ast}(r, u_{n}, i)\rho)-(\rho, \sigma(r, u, i)Q\sigma^{\ast}(r, u, i)\rho)\Big|
\\&\leq \Big|\rho\Big|\cdot\Big\{\Big|\sigma(r, u_{n}, i)Q\sigma^{\ast}(r, u_{n}, i)\rho
-\sigma(r, u, i)Q\sigma^{\ast}(r, u, i)\rho\Big|\Big\}
\\&\leq |\rho|^{2}\Big\{\Big| \sigma(r, u_{n}, i)Q\sigma^{\ast}(r, u_{n}, i)
-\sigma(r, u, i)Q\sigma^{\ast}(r, u, i)\Big|\Big\}.
\end{align*}
Recalling the definition of $L_{Q}$-norm, we see that
\begin{align*}
\Big| \sigma(r, u_{n}, i)Q\sigma^{\ast}(r, u_{n}, i)-\sigma(r, u, i)Q\sigma^{\ast}(r, u, i)\Big|
=\|\sigma(r, u_{n}, i)-\sigma(r, u, i)\|_{L_{Q}}.
\end{align*}
Therefore, by  Hypothesis $\bf H2$, we have
\begin{align*}
&\int^{T}_{0}\Big|(\rho, \sigma(r, u_{n}, i)Q\sigma^{\ast}(r, u_{n}, i)\rho)
-(\rho, \sigma(r, u, i)Q\sigma^{\ast}(r, u, i)\rho)\Big|^{2}dr
\\&\leq\|\rho\|^{4}\int^{T}_{0}\|\sigma(r, u_{n}, i)-\sigma(r, u, i)\|^{2}_{L_{Q}}dr
\leq L\|\rho\|^{4}\int^{T}_{0}|u_{n}-u|^{2}dr,
\end{align*}
which approaches  $0$ as $n\rightarrow\infty$ since $u_{n}\rightarrow u$ in $\tau$ means that $u_{n}\rightarrow u$ in $\tau_{4}$ (the strong topology in $L^{2}(0, T; H)$). Thus, we have
\begin{align*}
\lim_{n\rightarrow\infty}\int^{t}_{s}(\rho, \sigma(r, u_{n}, i)Q\sigma^{\ast}(r, u_{n}, i)\rho)
=\int^{t}_{s}(\rho, \sigma(r, u, i)Q\sigma^{\ast}(r, u, i)\rho).
\end{align*}

For the jump noise term, notice that $\bf G$ is continuous in all of its components, therefore, 
$
\lim_{n\rightarrow\infty}{\bf G}(r, u_{n}, i, z)={\bf G}(r, u, i, z)
$
for almost all $r\in [s, t]$ and all fixed $z$. This implies that, by Lemma \ref{lemma phi},
\begin{align*}
\phi(\langle{u}_{n}+{\bf G}(r, {u}_{n}, i , z),\rho\rangle, i)-\phi(\langle{u}_{n},\rho\rangle, i)
-\phi^{'}(\langle{u}_{n},\rho\rangle, i)\cdot\langle{\bf G}(r, {u}_{n}, i, z),\rho\rangle
\end{align*}
converges to 
\begin{align*}
\phi(\langle{u}+{\bf G}(r, {u}, i , z),\rho\rangle, i)-\phi(\langle{u},\rho\rangle, i)
-\phi^{'}(\langle{u},\rho\rangle, i)\cdot\langle{\bf G}(r, {u}, i , z),\rho\rangle
\end{align*}
almost surely in $[s, t]\times Z$. Writing $a_{n}=\langle u_{n}, \rho\rangle$ and $b_{n}=\langle u_{n}+{\bf G}(r, u_{n}, i, z)\rangle$, one infers from the Mean Value Theorem that
\begin{align*}
\phi(\langle{u}_{n}+{\bf G}(r, {u}_{n}, i, z),\rho\rangle, i)-\phi(\langle{u}_{n},\rho\rangle, i)
=\phi^{'}(c_{n})\langle{\bf G}(r, u_{n}, i, z), \rho\rangle,
\end{align*}
where $\phi^{'}(c_{n})=\phi^{'}(c_{n}, i)$ and $c_{n}\in (a_{n}, b_{n})$. Therefore, 
\begin{align*}
&\Big|\phi(\langle{u}_{n}+{\bf G}(r, {u}_{n}, i, z),\rho\rangle, i)-\phi(\langle{u}_{n},\rho\rangle, i)
-\phi^{'}(\langle{u}_{n},\rho\rangle, i)\cdot\langle{\bf G}(r, {u}_{n}, i, z),\rho\rangle\Big|
\\&\leq\Big|\phi(\langle{u}_{n}+{\bf G}(r, {u}_{n}, i, z),\rho\rangle, i)-\phi(\langle{u}_{n},\rho\rangle, i)\Big|
+\Big|\phi^{'}(\langle{u}_{n},\rho\rangle, i)\cdot\langle{\bf G}(r, {u}_{n}, i , z),\rho\rangle\Big|
\\&=\Big|\phi^{'}(c_{n})\langle{\bf G}(r, u_{n}, i, z), \rho\rangle\Big|
+\Big|\phi^{'}(\langle{u}_{n},\rho\rangle, i)\cdot\langle{\bf G}(r, {u}_{n}, i, z),\rho\rangle\Big|
\leq2\|\phi^{'}\|_{\infty}|\rho||{\bf G}(r, u_{n}, i, z)|,
\end{align*}
which implies
\begin{align*}
&\int^{T}_{0}\int_{Z}\Big|\phi(\langle{u}_{n}+{\bf G}(r, {u}_{n}, i, z),\rho\rangle, i)-\phi(\langle{u}_{n},\rho\rangle, i)
\\&\qquad\qquad-\phi^{'}(\langle{u}_{n},\rho\rangle, i)\cdot\langle{\bf G}(r, {u}_{n}, i, z),\rho\rangle\Big|^{2}\nu_{1}(dz)dr
\\&\leq4\|\rho\|^{2}\int^{T}_{0}\int_{Z}|{\bf G}(r, u_{n}, i, z)|^{2}\nu_{1}(dz)dr
\leq C\int^{T}_{0}(1+|u_{n}|^{2})dr,
\end{align*}
where $C=4K\|\rho\|^{2}$ and the last inequality follows from  Hypothesis $\bf H3$. Since $u_{n}\rightarrow u$ in $\tau$-topology, $u_{n}\rightarrow u$ in $\tau_{4}$, which implies that
$
\sup_{n}\int^{T}_{0}|u_{n}|^{2}dr<C
$
for a constant $C$. Therefore, 
\begin{align*}
&\sup_{n}\int^{T}_{0}\int_{Z}\Big|\phi(\langle{u}_{n}+{\bf G}(r, {u}_{n}, i , z),\rho\rangle, i)-\phi(\langle{u}_{n},\rho\rangle, i)
\\&\qquad\qquad\qquad-\phi^{'}(\langle{u}_{n},\rho\rangle, i)\cdot\langle{\bf G}(r, {u}_{n}, i, z),\rho\rangle\Big|^{2}\nu_{1}(dz)dr<C.
\end{align*} 
 Hence, we conclude, as $n\rightarrow\infty$,
\begin{align*}
&\int^{T}_{0}\int_{Z}\Big(\phi(\langle{u}_{n}+{\bf G}(r, {u}_{n}, i, z),\rho\rangle, i)-\phi(\langle{u}_{n},\rho\rangle, i)
\\&\qquad\qquad-\phi^{'}(\langle{u}_{n},\rho\rangle, i)\cdot\langle{\bf G}(r, {u}_{n}, i, z),\rho\rangle\Big)\nu_{1}(dz)dr
\end{align*}
converges to 
\begin{align*}
&\int^{t}_{s}\int_{Z}\Big(\phi(\langle{u}+{\bf G}(r, {u}, i, z),\rho\rangle, i)-\phi(\langle{u}(r),\rho\rangle, i)
\\&\qquad\qquad-\phi^{'}(\langle{u},\rho\rangle, i)\cdot\langle{\bf G}(r, {u}, i, z),\rho\rangle\Big)\nu_{1}(dz)dr.
\end{align*}
Herein, the proof is complete.
\end{proof}

The following lemma the final piece of the required argument. It is the only place where we require $\mathbb{E}|{\bf u}_{0}|^{3}<0$ and ${\bf f}\in L^{3}(0, T; V')$. 
\begin{lemma}\label{lemma 2}
Suppose that the Hypotheses $\bf H$ is fulfilled, $\mathbb{E}|{\bf u}_{0}|^{3}<\infty$, and ${\bf f}\in L^{3}(0, T; V')$.
There exist some $\delta>0$ such that
$$
\sup_{\ell}\mathbb{E}^{\mu_{n_{\ell}}}\big[|M^{\phi}|^{1+\delta}\big]\leq C,
$$
where $C$ is an appropriate constant.
\end{lemma}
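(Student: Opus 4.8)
The plan is to verify the bound with the choice $\delta=\tfrac12$, so that $1+\delta=\tfrac32$; this exponent is dictated by the nonlinear term and is precisely where the third-order a priori estimate \eqref{L^3 sup M} enters. Since $\mu_{n_{\ell}}$ is the joint law of the Galerkin pair $({\bf u}_{n_{\ell}},\mathfrak r)$, computing $\mathbb E^{\mu_{n_{\ell}}}[|M^{\phi}|^{3/2}]$ amounts to evaluating the functional \eqref{M} along $u={\bf u}_{n_{\ell}}$, for which the estimates \eqref{L^2 M}, \eqref{L^2 sup M}, \eqref{L^3 sup M} hold uniformly in $\ell$ (and in $\epsilon$). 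Writing $M^{\phi}$ as the sum of its constituent pieces in \eqref{M} --- the two boundary evaluations of $\phi$, the Markov-generator integral, the integral against $-\nu{\bf A}u-{\bf B}_{k_{\epsilon}}(u)+{\bf f}$ (which I split into its three summands), the $\sigma$-integral, and the Poissonian ${\bf G}$-integral --- I would first apply the elementary inequality $|\sum_{k}a_{k}|^{3/2}\le C\sum_{k}|a_{k}|^{3/2}$ for a finite sum to reduce the claim to a uniform $L^{3/2}(\mu_{n_{\ell}})$ bound for each piece separately. Throughout I use that $\phi,\phi',\phi''$ are bounded and that $\rho\in\mathcal D({\bf A})$ is fixed.

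The easy pieces I would dispose of first. The two $\phi$-evaluations and the Markov-generator term are bounded pointwise by a constant multiple of $\|\phi\|_{\infty}$ (the latter because $\Gamma$ is a fixed finite matrix over the finite horizon $[0,T]$), and the ${\bf f}$-term is dominated by $\|\phi'\|_{\infty}\|\rho\|\int_{0}^{T}\|{\bf f}(r)\|_{V'}\,dr<\infty$. For the ${\bf A}$-term I use $|\langle{\bf A}u,\rho\rangle_V|\le\|u\|\,\|\rho\|$ to bound it by $C(\int_{0}^{T}\|u\|^{2}dr)^{1/2}$, whose $L^{3/2}$-moment (indeed its second moment) is controlled by \eqref{L^2 sup M}. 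For both the $\sigma$-term and the ${\bf G}$-term I invoke Hypotheses ${\bf H1}$ and ${\bf H3}$: a second-order Taylor estimate bounds the ${\bf G}$-integrand by $\tfrac12\|\phi''\|_{\infty}\|\rho\|^{2}|{\bf G}|^{2}$, while $(\rho,\sigma Q\sigma^{*}\rho)\le|\rho|^{2}\|\sigma\|_{L_{Q}}^{2}$, so that both terms are dominated by $C(1+\int_{0}^{T}|u|^{2}dr)\le C(1+T\sup_{r}|u(r)|^{2})$; raising to the power $3/2$ and taking expectations, the bound follows from $\mathbb E\sup_{r}|u(r)|^{3}\le C_{3}$.

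The genuine difficulty, and the reason the third moment is needed, is the nonlinear term $\int_{s}^{t}\phi'(\langle u,\rho\rangle_V,i)\langle{\bf B}_{k_{\epsilon}}(u),\rho\rangle_V\,dr$. Here I would start from the antisymmetry \eqref{negative of b} together with \eqref{b_{k} uvu} to obtain $|\langle{\bf B}_{k_{\epsilon}}(u),\rho\rangle_V|\le C\|u\|\,|u|$, so that the term is bounded by $C\int_{0}^{T}\|u(r)\|\,|u(r)|\,dr$. The key algebraic step is the factorization $\|u\|\,|u|=|u|^{1/2}\,(|u|\,\|u\|^{2})^{1/2}$, which after the Cauchy--Schwarz inequality in $r$ gives $\int_{0}^{T}\|u\|\,|u|\,dr\le\big(T\sup_{r}|u(r)|\big)^{1/2}\big(\int_{0}^{T}|u|\,\|u\|^{2}dr\big)^{1/2}$. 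Raising to the power $3/2$ and applying H\"older's inequality with the conjugate exponents $4$ and $4/3$ then yields the uniform bound $C\,T^{3/4}\big(\mathbb E\sup_{r}|u|^{3}\big)^{1/4}\big(\mathbb E\int_{0}^{T}|u|\,\|u\|^{2}dr\big)^{3/4}\le C\,T^{3/4}C_{3}$ for the $L^{3/2}(\mu_{n_{\ell}})$-moment of this term, by \eqref{L^3 sup M}. Combining the pieces completes the proof.

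The main obstacle is exactly this nonlinear estimate. A naive bound of $\|u\|\,|u|$ would demand a fourth moment of $\sup_{r}|u|$, which the a priori estimates do not provide; the splitting that trades one factor of $\|u\|$ against the dissipation integral $\int_{0}^{T}|u|\,\|u\|^{2}dr$ is therefore essential, and a quick check of the available Hölder exponents shows that $1+\delta=\tfrac32$ is the largest power for which the nonlinear term can be controlled by $\mathbb E\sup_{r}|u|^{3}$ and $\mathbb E\int_{0}^{T}|u|\,\|u\|^{2}dr$ simultaneously. This is why $\mathbb E|{\bf u}_{0}|^{3}<\infty$ and ${\bf f}\in L^{3}(0,T;V')$ are indispensable precisely at this step.
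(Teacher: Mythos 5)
Your proposal is correct, and its skeleton coincides with the paper's: split $M^{\phi}$ into its finitely many constituent pieces, reduce to a uniform $L^{1+\delta}(\mu_{n_{\ell}})$ bound for each piece (using that $\mathbb{E}^{\mu_{n_{\ell}}}[F(u)]=\mathbb{E}[F({\bf u}_{n_{\ell}})]$), and recognize the nonlinear term as the only place where the third-moment hypothesis is indispensable. Where you genuinely diverge is in how that term is handled. The paper starts from the same bound $|\langle{\bf B}_{k_{\epsilon}}(u),\rho\rangle_{V}|\leq\|\rho\|\,\|u\|\,|u|$, pulls out $\sup_{r}|u(r)|$, and applies H\"older in expectation with exponents chosen so that $(1+\delta)q=2$; the dissipation then enters only through the $L^{2}$ estimate \eqref{L^2 M}, while the conjugate exponent forces $\mathbb{E}\sup_{r}|u|^{2(1+\delta)/(1-\delta)}$ to be controlled by \eqref{L^3 sup M}, which caps $\delta$ at $\tfrac15$. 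You instead exploit the \emph{weighted} dissipation integral $\mathbb{E}\int_{0}^{T}|u|\,\|u\|^{2}dr$ --- the second half of \eqref{L^3 sup M}, which the paper's proof of this lemma never touches --- via the factorization $\|u\|\,|u|=|u|^{1/2}(|u|\,\|u\|^{2})^{1/2}$, Cauchy--Schwarz in time, and H\"older with exponents $(4,\tfrac43)$; this balances the third moment across both factors and lets you reach $\delta=\tfrac12$, and your optimality computation ($\delta\le\tfrac12$ within this scheme versus $\delta\le\tfrac15$ in the paper's) is accurate. Since the lemma only requires \emph{some} $\delta>0$, the improvement is cosmetic, but your route is the sharper use of the available a priori information. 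Two smaller, equally legitimate deviations: you bound the Poisson term by a second-order Taylor estimate together with ${\bf H3}$ at $p=2$ (the paper uses the mean value theorem and ${\bf H3}$ at $p=1$), and your $\sigma$- and ${\bf G}$-bounds consume third moments where the paper's exponent choice only needs second moments --- harmless, since \eqref{L^3 sup M} is assumed anyway.
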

\begin{proof}
Recalling from \eqref{M} the definition of $M^{\phi}$, we employ inequality $(\sum_{i=1}^{5}a_{i})^{p}\leq 5^{p-1}\sum_{i=1}^{5} a_{i}^{p}$ and the Mean Value Theorem (on {\bf G}) to deduce that $|M^{\phi}(t)|^{1+\delta}$ is less than or equal to
\begin{align*}
&5^{\delta}|\phi(\langle{u}(t),\rho\rangle_{V}, i(t))|^{1+\delta}+5^{\delta}|\phi(\langle{u}(s)\rangle_{V}, i(s))|^{1+\delta}
\\&+5^{\delta}\|\phi'\|_{\infty}\int^{t}_{s}|\langle\nu{\bf A}{u}(r)+{\bf B}_{k_{\epsilon}}({u}(r))+{\bf f}(r),\rho\rangle_{V}|^{1+\delta}dr
\\&+5^{\delta}\|\phi''\|_{\infty}\int^{t}_{s}|(\rho, \sigma(r, {u}(r), i(r))Q\sigma^{\ast}(r, {u}(r), i(r)))_{H}|^{1+\delta}dr
\\&+5^{\delta}\int^{t}_{s}\|\phi'\|_{\infty}\Big|\int_{Z}\langle{\bf G}(r, {u}(r), i(r), z),\rho\rangle_{V}\nu(dz)\Big|^{1+\delta}dr
\end{align*}
since $\phi$ is bounded smooth function.

For the $\bf A$ term,
\begin{align*}
\int^{t}_{s}|\nu\langle{\bf A}{u}(r),\rho\rangle_{V}|^{1+\delta}dr
\leq\nu^{1+\delta}\int^{t}_{s}\big(\|{u}(r)\|_{V}\|{\bf A}\rho\|_{V'}\big)^{1+\delta}dr\leq C_{1}\int^{t}_{s}\|{u}(r)\|^{1+\delta}_{V}dr,
\end{align*}
where $C_{1}=\nu^{1+\delta}\|{\bf A}\rho\|_{V'}$.
This implies that
\begin{align*}
\mathbb{E}^{\mu_{n_{\ell}}}\int^{t}_{s}|\nu\langle{\bf A}{u}(r),\rho\rangle_{V}|^{1+\delta}dr
\leq C_{1}\mathbb{E}^{\mu_{n_{\ell}}}\int^{t}_{s}\|{u}(r)\|^{1+\delta}_{V}dr=C_{1}\mathbb{E}\int^{t}_{s}\|{\bf u}_{n_{\ell}}(r)\|^{1+\delta}_{V}dr,
\end{align*}
hence,
$
\sup_{\ell}\mathbb{E}^{n_{\ell}}\int^{t}_{s}\big|\langle\nu{\bf A}{u}(r), \rho\rangle_{V}\big|^{1+\delta}dr\leq C_{\bf{A}}
$
if $\delta<1$.

For the nonlinear term, \eqref{b_{k}uvw} and H\"older inequality imply
\begin{align*}
&\mathbb{E}^{\mu_{n_{\ell}}}\Big\{\big|\int^{t}_{s}\langle{\bf B}_{k_{\epsilon}}({u}(r)),\rho\rangle_{V}dr\big|^{1+\delta}\Big\}
\leq \|\rho\|^{1+\delta}_{V}\mathbb{E}^{\mu_{n_{\ell}}}\Big\{\big(\int^{t}_{s}\|{u}(r)\|_{V}|{u}(r)|_{H}dr\big)^{1+\delta}\Big\}
\\&\leq \|\rho\|^{1+\delta}_{V}\Big\{\mathbb{E}^{\mu_{n_{\ell}}}\big(\sup_{0\leq t\leq T}|{u}(t)|^{1+\delta}_{H}\big)^{p}\Big\}^{\frac{1}{p}}\Big\{\mathbb{E}^{\mu_{n_{\ell}}}\big(\int^{t}_{s}\|{u}(r)\|^{1+\delta}_{V}dr\big)^{q}\Big\}^{\frac{1}{q}},
\end{align*}
where $\frac{1}{p}+\frac{1}{q}=1$. Choosing $q$ such that $(1+\delta)q=2$,  we have
\begin{align}
&\mathbb{E}^{\mu_{n_{\ell}}}\Big\{\big|\int^{t}_{s}\langle{\bf B}_{k_{\epsilon}}({u}(r)),\rho\rangle_{V}\big|^{1+\delta}\Big\}\label{mg s_20}
\\&\leq \|\rho\|^{1+\delta}_{V}\Big\{\mathbb{E}^{\mu_{n_{\ell}}}\sup_{0\leq t\leq T}|{u}(t)|^{2(\frac{1+\delta}{1-\delta})}_{H}\Big\}^{\frac{1-\delta}{2}}\Big\{\mathbb{E}^{\mu_{n_{\ell}}}\int^{t}_{s}\|{u}(r)\|^{2}_{V}dr\Big\}^{\frac{1+\delta}{2}}.\nonumber
\\&= \|\rho\|^{1+\delta}_{V}\Big\{\mathbb{E}\sup_{0\leq t\leq T}|{\bf u}_{n_{\ell}}(t)|^{2(\frac{1+\delta}{1-\delta})}_{H}\Big\}^{\frac{1-\delta}{2}}\Big\{\mathbb{E}\int^{t}_{s}\|{\bf u}_{n_{\ell}}(r)\|^{2}_{V}dr\Big\}^{\frac{1+\delta}{2}}\nonumber
\end{align}
Taking $\delta=\frac{1}{5}$, we have $2(\frac{1+\delta}{1-\delta})=3$. The first expectation on the right of \eqref{mg s_20} will have a uniform bound by \eqref{L^3 sup M} if we further assume that $\mathbb{E}|{\bf u}_{0}|^{3}<\infty$. The boundedness of $\mathbb{E}\int^{t}_{s}\|{\bf u}_{n_{\ell}}(r)\|^{2}_{V}dr$ is followed from \eqref{L^2 M}. Therefore, \eqref{mg s_20} implies that, if $\delta\leq\frac{1}{5}$,
$$
\sup_{\ell}\mathbb{E}^{\mu_{n_{\ell}}}\Big\{\big|\int^{t}_{s}\langle{\bf B}_{k_{\epsilon}}({u}(r)),\rho\rangle_{V}\big|^{1+\delta}dr\Big\}\leq C_{{\bf B}}.
$$

For martingale terms, we have
\begin{align*}
&\int^{t}_{s}\big|\big(\rho, \sigma(r, {u}(r), i(r))Q\sigma^{\ast}(r, {u}(r), i(r))\rho\big)_{H}\big|^{1+\delta}dr
\\&\leq |\rho|^{2(1+\delta)}_{H}\int^{t}_{s}\|\sigma(r, {u}(r), i(r))\|^{1+\delta}_{L_{Q}}dr
\leq |\rho|^{2(1+\delta)}_{H}K^{\frac{1+\delta}{2}}\int^{t}_{s}(1+|{u}(r)|^{2}_{H})^{\frac{1+\delta}{2}}dr
\\&\leq |\rho|^{2(1+\delta)}_{H}K^{\frac{1+\delta}{2}}T^{\frac{1+\delta}{2}}\Big(T+\int^{t}_{s}|{u}(r)|^{2}_{H}dr\Big)^{\frac{1+\delta}{2}},
\end{align*}
where the second inequality follows from the Hypothesis $\bf H1$ with $p=2$, and the last inequality follows from the concavity of the power $\frac{1+\delta}{2}$. Using Hypothesis $\bf H3$ and inequality $(a+b)^{p}\leq 2^{p-1}(a^{p}+b^{p})$, we have
\begin{align*}
\int^{t}_{s}\Big|\int_{Z}\langle{\bf G}(r, {u}(r), i(r), z),\rho\rangle_{V}\nu(dz)\Big|^{1+\delta}dr
\leq 2^{\delta}|\rho|^{1+\delta}_{H}K^{1+\delta}\Big(T+\int^{t}_{s}|{u}(r)|^{1+\delta}_{H}dr\Big).
\end{align*}
Therefore, taking expectation and then supremum over $\ell$ on martingale terms, the above estimates imply
\begin{align*}
&\sup_{\ell}\mathbb{E}^{n_{\ell}}\int^{t}_{s}\big|\big(\rho, \sigma(r, {u}(r), i(r))Q\sigma^{\ast}(r, {u}(r), i(r))\rho\big)_{H}\big|^{1+\delta}dr
\leq C_{Q}(T)\\
& \text{and}\,\,\sup_{\ell}\mathbb{E}^{n_{\ell}}\int^{t}_{s}\Big|\int_{Z}\langle{\bf G}(r, {u}(r), i(r), z),\rho\rangle_{V}\nu(dz)\Big|dr
\leq C_{{\bf G}}(T)
\end{align*}
since, by \eqref{L^2 M}, if $\delta<1$,
\begin{align*}
&\mathbb{E}^{\mu_{n_{\ell}}}\int^{t}_{s}\|{u}(r)\|^{1+\delta}_{V}dr=\mathbb{E}\int^{t}_{s}\|{\bf u}_{n_{\ell}}(r)\|^{1+\delta}dr
\\&\leq\mathbb{E}\int^{t}_{s}\|{\bf u}_{n_{\ell}}(r)\|^{2}dr=\mathbb{E}^{\mu_{n_{\ell}}}\int^{t}_{s}\|{u}(r)\|^{2}_{V}dr\leq C.
\end{align*}

In conclusion,  the argument above shows that for $0<\delta\leq\frac{1}{5}$, there is a constant $C$ such that
$
\sup_{\ell}\mathbb{E}^{n_{\ell}}[|M^{\phi}|^{1+\delta}]\leq C
$
provided that $\mathbb{E}|{\bf u}(0)|^{3}$ is finite. Hence, we complete the proof.
\end{proof}

As $\bf M1$, $\bf M2$, and $\bf M3$ are shown, the existence theorem follows:
\begin{theorem}\label{exist}
Suppose that  $\mathbb{E}|{\bf u}_{0}|^{3}<\infty$ and ${\bf f}\in L^{3}(0, T; V')$. Then under the Hypotheses $\bf H$,
$M^{\phi}(t)$ is a $\mu$-martingale, i.e., $\mu$ is a solution to the martingale problem posed by \eqref{N-S M}. 
\end{theorem}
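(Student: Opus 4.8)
The plan is to read the martingale property off from the equivalent characterization in Lemma \ref{7.1.2}, so that it suffices to produce the limiting measure $\mu$ (already delivered by $\mathbf{M1}$ as a subsequential weak limit $\mu_{n_\ell}\Rightarrow\mu$) and to verify, for every collection of $\mathcal{F}_s$-measurable weights $\psi_1,\dots,\psi_m\in C_b(\Omega)$ with $\Psi:=\prod_{j=1}^m\psi_j(s_j)$, the orthogonality relation $\mathbb{E}^{\mu}[\Psi\,M^\phi]=0$, where $M^\phi$ denotes the increment over $[s,t]$ defined in \eqref{M} and $M^\phi_{n_\ell}$ its projected version \eqref{M_n}. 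The one hard input already in hand is that each $M^\phi_{n_\ell}$ is a genuine $\mu_{n_\ell}$-martingale (finite-dimensional It\^o formula), so that $\mathbb{E}^{\mu_{n_\ell}}[\Psi\,M^\phi_{n_\ell}]=0$ for every $\ell$; the entire argument is then the transfer of this identity to the limit.

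First I would insert, for each $\ell$, the telescoping decomposition
\begin{align*}
\mathbb{E}^{\mu}[\Psi\,M^\phi]
&=\big(\mathbb{E}^{\mu}-\mathbb{E}^{\mu_{n_\ell}}\big)[\Psi\,M^\phi]
+\mathbb{E}^{\mu_{n_\ell}}\big[\Psi(M^\phi-M^\phi_{n_\ell})\big]
+\mathbb{E}^{\mu_{n_\ell}}[\Psi\,M^\phi_{n_\ell}],
\end{align*}
and dispatch the three pieces separately. The third term vanishes identically by the $\mu_{n_\ell}$-martingale property. The first term tends to $0$ as $\ell\to\infty$: this is exactly $\mathbf{M3}$, obtained by applying Lemma \ref{prop conv} to the functional $\Psi\,M^\phi$, whose continuity on $(\Omega^{\dagger},\tau^{\dagger})$ is guaranteed by Lemma \ref{lemma conti} (with the $\psi_j$ continuous) and whose uniform $(1+\delta)$-moment bound, with $\delta=\tfrac15$, is supplied by Lemma \ref{lemma 2}; since Lemma \ref{prop conv} is phrased for $|f|$, I would apply it to the positive and negative parts of $\Psi\,M^\phi$, the uniform integrability permitting the passage to the limit for the signed functional. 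The middle term is $\mathbf{M2}$ read under the moving laws $\mu_{n_\ell}$, and is where the only genuine care is required.

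The hard part will be the middle term $\mathbb{E}^{\mu_{n_\ell}}[\Psi(M^\phi-M^\phi_{n_\ell})]$, because here \emph{both} the measure and the functional move with $\ell$: the convergences of Lemmata \ref{conv of A} to \ref{conv of G} are stated for a \emph{fixed} path $u\in\Omega^{\ast}$, whereas $\mu_{n_\ell}$ is the law of $({\bf u}_{n_\ell},\mathfrak{r})$, so the argument itself varies. I would therefore estimate $M^\phi-M^\phi_{n_\ell}$ directly along ${\bf u}_{n_\ell}$, observing that every projected operator differs from its unprojected counterpart only through the substitution $\rho\mapsto\rho_{n_\ell}:=\Pi_{n_\ell}\rho$ (for instance $\langle{\bf A}_{n_\ell}u,\rho\rangle=\langle{\bf A}u,\rho_{n_\ell}\rangle$), so each contribution factors as $\|\rho-\rho_{n_\ell}\|$ times a quantity whose $\mu_{n_\ell}$-expectation is bounded \emph{uniformly in $\ell$} by Proposition \ref{A priori estimates of N-S M}. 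Concretely, the $\mathbf{A}$-contribution is dominated by $\|\rho-\rho_{n_\ell}\|\sqrt{T}\,(\mathbb{E}\int_0^T\|{\bf u}_{n_\ell}\|^2\,dr)^{1/2}$ via \eqref{L^2 M}; the ${\bf B}_{k_\epsilon}$-contribution, after using the antisymmetry of $b(k_\epsilon\cdot,\cdot,\cdot)$ and \eqref{b_{k} uvu} to bound $\langle{\bf B}_{k_\epsilon}(u),\rho-\rho_{n_\ell}\rangle$ by $\|u\|\,|u|\,\|\rho-\rho_{n_\ell}\|$, is controlled by the same $L^2$-estimates; and the two noise contributions are handled by Hypotheses $\mathbf{H1}$, $\mathbf{H3}$ together with \eqref{L^2 M}. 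In each case Lemma \ref{lemma rho} forces $\|\rho-\rho_{n_\ell}\|\to0$, so the middle term vanishes. Once all three pieces are shown to vanish in the limit, the arbitrariness of $\ell$ yields $\mathbb{E}^{\mu}[\Psi\,M^\phi]=0$, and Lemma \ref{7.1.2} upgrades this to the statement that $M^\phi$ is a $\mu$-martingale, i.e. that $\mu$ solves the martingale problem posed by \eqref{N-S M}. It is precisely in the moment estimate of Lemma \ref{lemma 2} that the standing hypotheses $\mathbb{E}|{\bf u}_0|^3<\infty$ and ${\bf f}\in L^3(0,T;V')$ are consumed.
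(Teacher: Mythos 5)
Your proposal is correct and follows the paper's own skeleton: reduce via Lemma \ref{7.1.2} to showing $\mathbb{E}^{\mu}[\Psi\,M^{\phi}]=0$, split into the three pieces corresponding to $\mathbf{M1}$--$\mathbf{M3}$, kill the third piece by the finite-dimensional It\^o formula (the $\mu_{n_\ell}$-martingale property of $M^{\phi}_{n_\ell}$), and handle the first piece by Lemma \ref{prop conv} together with the continuity of $M^{\phi}$ (Lemma \ref{lemma conti}) and the uniform $(1+\delta)$-moment bound of Lemma \ref{lemma 2}, which is indeed the only place the hypotheses $\mathbb{E}|{\bf u}_{0}|^{3}<\infty$ and ${\bf f}\in L^{3}(0,T;V')$ enter. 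Where you genuinely depart from the paper is the middle term $\mathbb{E}^{\mu_{n_\ell}}[\Psi(M^{\phi}-M^{\phi}_{n_\ell})]$. The paper proves $\mathbf{M2}$ as a pointwise statement on path space --- for a \emph{fixed} generic $\omega=(u,i)$, each projected term converges by dominated convergence (Lemmata \ref{conv of A}--\ref{conv of G}) --- and then simply asserts that ``the expectation and limit is exchangeable.'' As you correctly observe, that assertion is the delicate point, because under $\mu_{n_\ell}$ the path is ${\bf u}_{n_\ell}$ and therefore moves with $\ell$; pointwise convergence in $\omega$ does not by itself control an expectation against a moving law. Your quantitative substitute --- every projected operator differs from its unprojected counterpart only through $\rho\mapsto\rho_{n_\ell}$, so that $|M^{\phi}-M^{\phi}_{n_\ell}|$ is bounded by $\|\rho-\rho_{n_\ell}\|$ (which tends to $0$ by Lemma \ref{lemma rho}) times quantities whose $\mu_{n_\ell}$-expectations are bounded uniformly in $\ell$ by Proposition \ref{A priori estimates of N-S M} and Hypotheses $\mathbf{H1}$, $\mathbf{H3}$ --- is exactly what is needed to make that step airtight, and your individual estimates (the ${\bf A}$ term via \eqref{L^2 M}, the ${\bf B}_{k_\epsilon}$ term via the antisymmetry coming from \eqref{vanishing of b} together with \eqref{b_{k} uvu}, the noise terms via $\mathbf{H1}$ and $\mathbf{H3}$) all check out. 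So your route buys a rigorous justification of the paper's exchangeability claim at essentially no extra cost, while the paper's fixed-path lemmas are not wasted in your scheme either, since the same computations underlie the continuity of $M^{\phi}$ that your first piece requires.
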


\subsection{Uniqueness of the solution to the regularized equation}\label{uniquely}
In this subsection, we prove that the (weak) solution obtained from Theorem \ref{exist} is pathwise unique.
\begin{theorem}\label{unique}
Let $\mathbb{E}|{\bf u}_{0}|^{3}<\infty$ and ${\bf f}\in L^{3}(0, T; V')$. Then under Hypotheses $\bf H$, the solution obtained from Theorem \ref{exist} is pathwise unique.
\end{theorem}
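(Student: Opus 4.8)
The plan is to compare two weak solutions $(\mathbf{u}^{1},\mathfrak{r}^{1})$ and $(\mathbf{u}^{2},\mathfrak{r}^{2})$ of \eqref{N-S M} carried by one stochastic basis, driven by the same Wiener process $W$, the same Poisson random measure $N_{1}$, and the same measure $N_{2}$ governing the switching, and issuing from the same initial datum $(\xi,r)$. The first step, and the one that makes the switching tractable, is to show the two chains coincide. Since $\mathfrak{r}^{1}$ and $\mathfrak{r}^{2}$ both solve the jump equation \eqref{markov repre} driven by the \emph{same} $N_{2}$ from the same initial state $r$, and the coefficient $h$ in \eqref{def of h} is bounded with locally finite jumps, the solution of \eqref{markov repre} is pathwise unique, whence $\mathfrak{r}^{1}(t)=\mathfrak{r}^{2}(t)$ for all $t$ almost surely. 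Writing $\mathfrak{r}$ for this common chain is precisely what will allow me to invoke Hypotheses $\mathbf{H2}$ and $\mathbf{H4}$, which are stated for a \emph{fixed} Markov state $i$.

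Next I would set $\mathbf{w}=\mathbf{u}^{1}-\mathbf{u}^{2}$. Subtracting the two copies of \eqref{N-S M}, the forcing $\mathbf{f}$ cancels and, because both noises act through the common $\mathfrak{r}$, the process $\mathbf{w}$ solves
\begin{align*}
d\mathbf{w}(t)+\big[\nu\mathbf{A}\mathbf{w}(t)+\mathbf{B}_{k_{\epsilon}}(\mathbf{u}^{1}(t))-\mathbf{B}_{k_{\epsilon}}(\mathbf{u}^{2}(t))\big]dt
&=\big[\sigma(t,\mathbf{u}^{1}(t),\mathfrak{r}(t))-\sigma(t,\mathbf{u}^{2}(t),\mathfrak{r}(t))\big]dW(t)\\
&\quad+\int_{Z}\big[\mathbf{G}(t,\mathbf{u}^{1}(t-),\mathfrak{r}(t-),z)-\mathbf{G}(t,\mathbf{u}^{2}(t-),\mathfrak{r}(t-),z)\big]\tilde{N}_{1}(dz,dt),
\end{align*}
with $\mathbf{w}(0)=0$. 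Applying the It\^o formula to $|\mathbf{w}(t)|^{2}$ in the spirit of \eqref{energy equality}, the Stokes term yields $-2\nu\int_{0}^{t}\|\mathbf{w}(s)\|^{2}\,ds$ by \eqref{property of Stokes operator}, the integrals against $W$ and $\tilde{N}_{1}$ are (local) martingales, and the surviving drift comes from the nonlinearity together with the $L_{Q}$- and $\nu_{1}$-compensators.

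The crux is the nonlinear term. I would decompose bilinearly $\mathbf{B}_{k_{\epsilon}}(\mathbf{u}^{1})-\mathbf{B}_{k_{\epsilon}}(\mathbf{u}^{2})=\mathbf{B}_{k_{\epsilon}}(\mathbf{u}^{1},\mathbf{w})+\mathbf{B}_{k_{\epsilon}}(\mathbf{w},\mathbf{u}^{2})$ and pair with $\mathbf{w}$. The first contribution is $b(k_{\epsilon}\mathbf{u}^{1},\mathbf{w},\mathbf{w})$, which vanishes by the analogue of \eqref{vanishing of b} for $b(k_{\epsilon}\cdot,\cdot,\cdot)$, since $\eta_{\epsilon}\ast\mathbf{u}^{1}$ remains divergence-free and integration by parts kills the diagonal term. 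The second is controlled by \eqref{b_{k} uvu} as $|b(k_{\epsilon}\mathbf{w},\mathbf{u}^{2},\mathbf{w})|\le\|\mathbf{w}\|\,|\mathbf{w}|\,\|\mathbf{u}^{2}\|$, and Young's inequality gives $2\|\mathbf{w}\|\,|\mathbf{w}|\,\|\mathbf{u}^{2}\|\le\nu\|\mathbf{w}\|^{2}+\frac{1}{\nu}|\mathbf{w}|^{2}\|\mathbf{u}^{2}\|^{2}$, the first piece being absorbed into the dissipation. The noise terms are bounded after taking expectations by $C L\,|\mathbf{w}|^{2}$ via $\mathbf{H2}$ and $\mathbf{H4}$. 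Collecting everything produces
\begin{align*}
\mathbb{E}|\mathbf{w}(t)|^{2}+\nu\,\mathbb{E}\int_{0}^{t}\|\mathbf{w}(s)\|^{2}\,ds
\le\mathbb{E}\int_{0}^{t}\Big(\frac{1}{\nu}\|\mathbf{u}^{2}(s)\|^{2}+C\Big)|\mathbf{w}(s)|^{2}\,ds.
\end{align*}

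The main obstacle is the Gronwall step: the coefficient $\|\mathbf{u}^{2}(s)\|^{2}$ is random and only integrable in time, so deterministic Gronwall applied to the expectation is unavailable. I would instead run a stochastic Gronwall argument, working with the weighted process $|\mathbf{w}(t)|^{2}\exp\!\big(-\int_{0}^{t}(\frac{1}{\nu}\|\mathbf{u}^{2}(s)\|^{2}+C)\,ds\big)$, whose finite-variation weight cancels the offending drift under It\^o's product rule. To legitimize the expectations I would localize by a sequence of stopping times $\tau_{R}$ built from the a priori bounds of Proposition \ref{A priori estimates of N-S M} (which guarantee $\mathbf{u}^{2}\in L^{2}(0,T;V)$ and the $L^{3}$ estimate \eqref{L^3 sup M}), turning the stochastic integrals into genuine martingales; letting $R\to\infty$ then forces $\mathbb{E}[|\mathbf{w}(t)|^{2}]=0$, so $\mathbf{w}\equiv 0$. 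Together with $\mathfrak{r}^{1}=\mathfrak{r}^{2}$ this is exactly pathwise uniqueness, and combining it with the existence of a weak solution (Theorem \ref{exist}) via the Yamada--Watanabe principle \cite{Y-W} yields the unique strong solution asserted in Theorem \ref{existence and uniqueness of N-S M}.
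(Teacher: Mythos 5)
Your proposal is correct and follows essentially the same route as the paper's proof: the identical decomposition of the nonlinear term via the cancellation \eqref{vanishing of b} and the estimate \eqref{b_{k} uvu}, the same exponential-weight (stochastic Gronwall) device $\rho(t)\propto\int_{0}^{t}\|{\bf u}\|^{2}ds$ to neutralize the random, merely time-integrable coefficient, Hypotheses $\mathbf{H2}$/$\mathbf{H4}$ for the noise differences, and Yamada--Watanabe to conclude. The only deviations are cosmetic: you spell out the identification $\mathfrak{r}^{1}=\mathfrak{r}^{2}$, which the paper leaves implicit by writing a single chain $\mathfrak{r}$ for both solutions, and you close with a localized supermartingale argument at each fixed $t$, whereas the paper takes a supremum, controls the martingale terms with the Davis inequality, and then applies Gronwall.
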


\begin{proof}
Let ${\bf w}={\bf u-v}$, where ${\bf u}$, ${\bf v}$ are solutions with same initial data.
Let $F(t, x, i)\colonequals e^{-\rho(t)}x$, where $\rho(t)$ is a function that will be determined later. Then the It\^o formula implies
\begin{align}\label{mg s_17}
&e^{-\rho(t)}|{\bf w}(t)|^{2}+2\nu\int^{t}_{s}e^{-\rho(s)}\|{\bf w}(s)\|^{2}ds
\\&=\int^{t}_{0}-\rho^{'}(s)e^{-\rho(s)}|{\bf w}(s)|^{2}ds\nonumber
\\&\quad-\int^{t}_{0}e^{-\rho(s)}\langle{\bf B}_{k_{\epsilon}}({\bf u}(s))-{\bf B}_{k_{\epsilon}}({\bf v}(s)),{\bf w}(s)\rangle_{V}ds\nonumber
\\&\quad+\int^{t}_{0}e^{-\rho(s)}\|\sigma(s, {\bf u}(s), \mathfrak{r}(s))-\sigma(s, {\bf v}(s), \mathfrak{r}(s))\|^{2}_{L_{Q}}ds\nonumber
\\&\quad+2\int^{t}_{0}e^{-\rho(s)}\langle{\bf w}(s), [\sigma(s, {\bf u}(s), \mathfrak{r}(s) )-\sigma(s, {\bf v}(s), \mathfrak{r}(s))]dW(s)\rangle\nonumber
\\&\quad+2\int^{t}_{0}\int_{Z}e^{-\rho(s)}\nonumber
\\&\qquad\cdot\big({\bf w}(s-), {\bf G}(s, {\bf u}(s-), \mathfrak{r}(s-), z)-{\bf G}(s, {\bf v}(s-), \mathfrak{r}(s-), z)\big)_{H}\tilde{N}_{1}\nonumber
\\&\quad+\int e^{-\rho(s)}|{\bf G}(s, {\bf u}(s-), \mathfrak{r}(s-), z)-{\bf G}(s, {\bf v}(s-), \mathfrak{r}(s-), z)|^{2}N_{1},\nonumber
\end{align}
where $\tilde{N}_{1}=\tilde{N}_{1}(dz, ds)$, $N_{1}=N_{1}(dz, ds)$, and the last integral is over $[s, t]\times Z$.

For the nonlinear term, it follows from its definition that
\begin{align*}
&|\langle{\bf B}_{k_{\epsilon}}({\bf u}(s))-{\bf B}_{k_{\epsilon}}({\bf v}(s)), {\bf w}(s)\rangle_{V}|
\\&\leq|b(k_{\epsilon}{\bf u}(s), {\bf u}(s), {\bf w}(s))-b(k_{\epsilon}{\bf v}(s), {\bf u}(s), {\bf w}(s)|
\\&\quad+|b(k_{\epsilon}{\bf v}(s), {\bf u}(s), {\bf w}(s))-b(k_{\epsilon}{\bf v}(s), {\bf v}(s), {\bf w}(s))|
\\&=|b(k_{\epsilon}{\bf w}(s), {\bf u}(s), {\bf w}(s))|+|b(k_{\epsilon}{\bf v}(s), {\bf w}(s), {\bf w}(s))|
\\&=|b(k_{\epsilon}{\bf w}(s), {\bf u}(s), {\bf w}(s))|,
\end{align*}
where the last equality follows from \eqref{vanishing of b}.
Therefore, by \eqref{b_{k} uvu} and the basic Young inequality, we see that
\begin{align*}
&\Big|\int^{t}_{0}e^{-\rho(s)}\langle{\bf B}_{k_{\epsilon}}({\bf u}(s))-{\bf B}_{k_{\epsilon}}({\bf v}(s)), {\bf w}(s)\rangle_{V}ds\Big|
\\&\leq\int^{t}_{0}e^{-\rho(s)}(\|{\bf w}(s)\|\cdot|{\bf w}(s)|\cdot\|{\bf u}(s)\|)ds
\\&\leq\nu\int^{t}_{0}e^{-\rho(s)}\|{\bf w}(s)\|^{2}ds+\frac{1}{4\nu}\int^{t}_{0}e^{-\rho(s)}|{\bf w}(s)|^{2}\|{\bf u}(s)\|^{2}ds.
\end{align*}
Therefore, choosing $\rho(t):=\frac{1}{4\nu}\int^{t}_{0}\|{\bf u}(s)\|^{2}ds$, we deduce form \eqref{mg s_17} that
\begin{align}
&e^{-\rho(t)}|{\bf w}(t)|^{2}+\nu\int^{t}_{0}e^{-\rho(s)}\|{\bf w}(s)\|^{2}ds\label{mg s_18}
\\&\leq\int^{t}_{0}e^{-\rho(s)}\|\sigma(s, {\bf u}(s), \mathfrak{r}(s))-\sigma(s, {\bf v}(s), \mathfrak{r}(s))\|^{2}_{L_{Q}}ds\nonumber
\\&\quad+2\int^{t}_{0}e^{-\rho(s)}\langle{\bf w}(s), [\sigma(s, {\bf u}(s), \mathfrak{r}(s))-\sigma(s, {\bf v}(s), \mathfrak{r}(s))]dW(s)\rangle\nonumber
\\&\quad+2\int^{t}_{0}\int_{Z}e^{-\rho(s)}\nonumber
\\&\qquad\cdot\big({\bf w}(s-), {\bf G}(s, {\bf u}(s-), \mathfrak{r}(s-), z)-{\bf G}(s, {\bf v}(s-), \mathfrak{r}(s-), z)\big)_{H}\tilde{N}_{1}\nonumber
\\&\quad+\int e^{-\rho(s)}|{\bf G}(s, {\bf u}(s-), \mathfrak{r}(s-), z)-{\bf G}(s, {\bf v}(s-), \mathfrak{r}(s-), z)|^{2}N_{1},\nonumber
\end{align}
where $\tilde{N}_{1}=\tilde{N}_{1}(dz, ds)$, $N_{1}=N_{1}(dz, ds)$, and the last integral is over $[s, t]\times Z$.
Moreover, by the Davis and the basic Young inequalities and Hypotheses $\bf H2$ and $\bf H4$, the martingale terms in \eqref{mg s_18} have the following estimates.
\begin{align*}
&\mathbb{E}\sup_{0\leq t\leq T}\int^{t}_{0}2e^{-\rho(s)}\langle{\bf w}(s), [\sigma(s, {\bf u}(s), \mathfrak{r}(s))-\sigma(s, {\bf v}(s), \mathfrak{r}(s))]dW(s)\rangle
\\&\leq2\sqrt{L}C_{\frac{1}{2}}\mathbb{E}\Big\{\Big(\int^{T}_{0}e^{-2\rho(s)}|{\bf w}(s)|^{2}\cdot|{\bf w}(s)|^{2}ds\Big)^{\frac{1}{2}}\Big\}
\\&\leq2\sqrt{L}C_{\frac{1}{2}}\mathbb{E}\Big\{\sup_{0\leq t\leq T}e^{-\frac{1}{2}\rho(t)}|{\bf w}(t)|\big(\int^{T}_{0}e^{-\rho(s)}|{\bf w}(s)|^{2}\big)^{\frac{1}{2}}\Big\}
\\&\leq2\sqrt{L}C_{\frac{1}{2}}\Big\{\epsilon\sup_{0\leq t\leq T}e^{-\rho(t)}|{\bf w}(t)|^{2}+C_{\epsilon}\int^{T}_{0}e^{-\rho(s)}|{\bf w}(s)|^{2}ds\Big\}
\end{align*}
and
\begin{align*}
&\mathbb{E}\sup_{0\leq t\leq T}\int^{t}_{0}\int_{Z} 2e^{-\rho(s)}
\\&\quad\cdot\big({\bf w}(s-), {\bf G}(s, {\bf u}(s-), \mathfrak{r}(s-), z)-{\bf G}(s, {\bf v}(s-), \mathfrak{r}(s-), z)\big)_{H}\tilde{N}_{1}
\\&\leq2\sqrt{L}C_{\frac{1}{2}}\mathbb{E}\Big\{\sup_{0\leq t\leq T}e^{-\frac{1}{2}\rho(t)}|{\bf w}(t)|\big(\int^{T}_{0}e^{-\rho(s)}|{\bf w}(s)|^{2}ds\big)^{\frac{1}{2}}\Big\}
\\&\leq 2\sqrt{L}C_{\frac{1}{2}}\mathbb{E}\Big\{\epsilon\sup_{0\leq t\leq T}e^{-\rho(t)}|{\bf w}(t)|^{2}+C_{\epsilon}\int^{T}_{0}e^{-\rho(s)}|{\bf w}(s)|^{2}ds\Big\},
\end{align*}
where $\tilde{N}_{1}=\tilde{N}_{1}(dz, ds)$.
As a consequence, taking supremum over $[0, T]$ and then expectation, one obtains from \eqref{mg s_18} the following.
\begin{align*}
&\mathbb{E}\sup_{0\leq t\leq T}e^{-\rho(t)}|{\bf w}(t)|^{2}+\nu\mathbb{E}\int^{T}_{0}\|{\bf w}(s)\|^{2}ds
\\&\leq 2L\mathbb{E}\int^{T}_{0}e^{\rho(s)}|{\bf w}(s)|^{2}ds+\mathfrak{C}\epsilon\mathbb{E}\sup_{0\leq t\leq T}e^{-\rho(t)}|{\bf w}(t)|^{2}
+\mathfrak{C}C_{\epsilon}\mathbb{E}\int^{T}_{0}e^{-\rho(s)}|{\bf w}(s)|^{2}ds,
\end{align*}
where $\mathfrak{C}=4\sqrt{L}C_{\frac{1}{2}}$.
Choosing $\epsilon$ small enough so that $\mathfrak{C}\epsilon<\frac{1}{2}$, one obtains from above that
\begin{align*}
\mathbb{E}\sup_{0\leq t\leq T}e^{-\rho(t)}|{\bf w}(t)|^{2}
\leq C\mathbb{E}\int^{T}_{0}e^{-\rho(s)}|{\bf w}(s)|^{2}ds\leq C\mathbb{E}\int^{T}_{0}\sup_{0\leq r\leq s}e^{-\rho(r)}|{\bf w}(r)|^2ds,
\end{align*}
where $C$ stands for a generic constant. Furthermore, we employ the Gronwall inequality to obtain
$
\mathbb{E}\sup_{0\leq t\leq T}e^{-\rho(t)}|{\bf w}(t)|^{2}\leq 0,
$
which implies the pathwise uniqueness. Hence, we complete the proof.
\end{proof}

\section{The Navier-Stokes Equation with Markov Switching}\label{original}
In this section, we prove the existence of a weak solution (in the sense of Definition \ref{Solution of MG problem}) to equation \eqref{N-S M original}.

Let ${\bf u}_{0}$ be an $H$-valued random variable such that $\mathbb{E}|{\bf u}_{0}|^{3}<\infty$. Define ${\bf u}^{\epsilon}_{0}\colonequals k_{\epsilon}{\bf u}_{0}$. Then ${\bf u}^{\epsilon}_{0}$ is an $H$-valued random variable with 
$
\mathbb{E}|{\bf u}^{\epsilon}_{0}|^{3}=\mathbb{E}|k_{\epsilon}{\bf u}_{0}|^{3}\leq\mathbb{E}|{\bf u}_{0}|^{3}<\infty.
$
Let ${\bf f}\in L^{3}(0, T; V')$. Then with the given initial data ${\bf u}^{\epsilon}_{0}$ and the external forcing $\bf f$, there exist a unique strong solution $({\bf u}^{\epsilon}(t), \mathfrak{r}(t))$ to the equation \eqref{N-S M} for each $\epsilon>0$.

Proceeding as in the argument of Proposition \ref{A priori estimates of N-S M}, we see that ${\bf u}^{\epsilon}(t)$ satisfies
\begin{align}\label{finding bound L^2 V}
\mathbb{E}\sup_{0\leq t\leq T}|{\bf u}^{\epsilon}(t)|^{2}+\nu\mathbb{E}\int^{T}_{0}\|{\bf u}^{\epsilon}(s)\|^{2}ds
\leq C_2
\end{align}
Making use of the estimate \eqref{finding bound L^2 V}, we deduce from Lemma \ref{Aldous} that the processes $\{{\bf u}^{\epsilon}\}_{\epsilon>0}$ is tight in $\mathcal{D}([0, T]; V')$.

\begin{lemma}
The sequence of processes $\{{\bf u}^{\epsilon}\}_{\epsilon>0}$ is tight in $\mathcal{D}([0, T]; V')$.
\end{lemma}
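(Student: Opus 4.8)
The plan is to verify the two conditions of Aldous' criterion (Lemma~\ref{Aldous}) for the family $\{{\bf u}^{\epsilon}\}_{\epsilon>0}$, following the proof of Lemma~\ref{lemma 1} essentially verbatim, except that every estimate must now be \emph{uniform in} $\epsilon$; this is exactly what the $\epsilon$-independence of the constant $C_{2}$ in \eqref{finding bound L^2 V} provides. I would fix an arbitrary sequence $\epsilon_{k}\downarrow 0$, set $X_{k}:={\bf u}^{\epsilon_{k}}$, and prove tightness of $\{X_{k}\}$; since the sequence is arbitrary, the whole family is then tight. For the moment condition \eqref{Sundar Aldous}, I would use $\|\cdot\|_{V'}\leq|\cdot|$ and the Markov inequality to get, for each rational $t$,
\begin{align*}
\mathcal{P}\big(\|{\bf u}^{\epsilon_{k}}(t)\|_{V'}>N\big)\leq\frac{1}{N^{2}}\mathbb{E}|{\bf u}^{\epsilon_{k}}(t)|^{2}\leq\frac{C_{2}}{N^{2}},
\end{align*}
and since $C_{2}$ does not depend on $\epsilon_{k}$, taking $\limsup_{k}$ and then $N\to\infty$ yields $0$.

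For the increment condition, let $(T_{k},\delta_{k})$ be as in Lemma~\ref{Aldous}. Writing \eqref{N-S M} in integral form gives
\begin{align*}
{\bf u}^{\epsilon_{k}}(T_{k}+\delta_{k})-{\bf u}^{\epsilon_{k}}(T_{k})
&=-\int_{T_{k}}^{T_{k}+\delta_{k}}\big[\nu{\bf A}{\bf u}^{\epsilon_{k}}(s)+{\bf B}_{k_{\epsilon_{k}}}({\bf u}^{\epsilon_{k}}(s))-{\bf f}(s)\big]ds\\
&\quad+\int_{T_{k}}^{T_{k}+\delta_{k}}\sigma(s,{\bf u}^{\epsilon_{k}}(s),\mathfrak{r}(s))dW(s)\\
&\quad+\int_{T_{k}}^{T_{k}+\delta_{k}}\int_{Z}{\bf G}(s,{\bf u}^{\epsilon_{k}}(s-),\mathfrak{r}(s-),z)\tilde{N}_{1}(dz,ds),
\end{align*}
and I would bound the $V'$-norm of each term. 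The Stokes term is controlled by $\nu\sqrt{\delta_{k}}\big(\int_{0}^{T}\|{\bf u}^{\epsilon_{k}}\|^{2}ds\big)^{1/2}$ via $\|{\bf A}{\bf u}\|_{V'}\leq\|{\bf u}\|$ and Cauchy--Schwarz; the forcing term is bounded deterministically by $\delta_{k}^{2/3}\|{\bf f}\|_{L^{3}(0,T;V')}$. For the two stochastic integrals I would pass to $\|\cdot\|_{V'}\leq|\cdot|$, apply the It\^o and compensated-Poisson isometries over $[T_{k},T_{k}+\delta_{k}]$, and invoke Hypotheses $\bf H1$ and $\bf H3$ (both with $p=2$) to bound their second moments by $C\,\mathbb{E}\int_{T_{k}}^{T_{k}+\delta_{k}}(1+|{\bf u}^{\epsilon_{k}}|^{2})ds\leq C\delta_{k}\big(1+\mathbb{E}\sup_{t}|{\bf u}^{\epsilon_{k}}(t)|^{2}\big)$. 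Every contribution is $O(\delta_{k}^{\alpha})$ with constants uniform in $k$ by \eqref{finding bound L^2 V}, so Markov's inequality gives $\|{\bf u}^{\epsilon_{k}}(T_{k}+\delta_{k})-{\bf u}^{\epsilon_{k}}(T_{k})\|_{V'}\to0$ in probability.

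The one point requiring care --- and the main obstacle --- is the nonlinear term, since the constant $C'_{\epsilon}=\|\eta_{\epsilon}\|_{6/5}$ of \eqref{ineq of b} \emph{blows up} as $\epsilon\to0$, making the fixed-$\epsilon$ bound \eqref{b_{k} uvu} useless here. The remedy is to keep only the $L^{3}$-norm on the mollified factor and move the high integrability onto the (smooth) test function: by the generalized H\"older inequality, Young's convolution inequality with $\|\eta_{\epsilon}\|_{1}=1$, the Sobolev embedding $V\hookrightarrow L^{6}$, and the interpolation $\|{\bf u}\|_{3}\leq C|{\bf u}|^{1/2}\|{\bf u}\|^{1/2}$, one obtains
\begin{align*}
|b(k_{\epsilon}{\bf u},{\bf u},{\bf w})|\leq\|\eta_{\epsilon}\ast{\bf u}\|_{3}\,\|\nabla{\bf u}\|_{2}\,\|{\bf w}\|_{6}\leq C|{\bf u}|^{1/2}\|{\bf u}\|^{3/2}\|{\bf w}\|,
\end{align*}
hence $\|{\bf B}_{k_{\epsilon}}({\bf u})\|_{V'}\leq C|{\bf u}|^{1/2}\|{\bf u}\|^{3/2}$ with $C$ \emph{independent of} $\epsilon$. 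The nonlinear increment is then dominated by $C\big(\sup_{t}|{\bf u}^{\epsilon_{k}}|^{1/2}\big)\delta_{k}^{1/4}\big(\int_{0}^{T}\|{\bf u}^{\epsilon_{k}}\|^{2}ds\big)^{3/4}$, whose expectation is $O(\delta_{k}^{1/4})$ by H\"older (exponents $4$ and $4/3$) together with \eqref{finding bound L^2 V}. With all terms shown to vanish in probability as $k\to\infty$, Aldous' criterion yields tightness of $\{X_{k}\}$ in $\mathcal{D}([0,T];V')$, and the arbitrariness of $\{\epsilon_{k}\}$ gives tightness of $\{{\bf u}^{\epsilon}\}_{\epsilon>0}$.
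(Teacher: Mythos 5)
Your proof is correct, and while it verifies the same Aldous criterion as the paper, you handle the increment condition by a genuinely different argument. The paper repeats the proof of Lemma \ref{lemma 1} essentially verbatim: Chebyshev plus $\|\cdot\|_{V'}\le|\cdot|$ reduces the increment to $\mathbb{E}|{\bf u}^{\epsilon}(T_{\epsilon}+\delta_{\epsilon})-{\bf u}^{\epsilon}(T_{\epsilon})|^{2}$, which is then bounded by $\big(\tfrac{1}{\nu}\mathbb{E}\int_{0}^{\delta_{\epsilon}}\|{\bf f}(s)\|_{V'}^{2}ds+2K\delta_{\epsilon}\big)e^{2K\delta_{\epsilon}}\to0$ via ``the It\^o formula and the Gronwall inequality''; in that energy-type computation in the $H$-norm the Stokes and nonlinear contributions are never displayed, so the question of $\epsilon$-uniformity of the bounds on ${\bf B}_{k_{\epsilon}}$ never surfaces. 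You instead expand the increment through the integral form of \eqref{N-S M} and bound each term in $V'$, which makes the nonlinear term unavoidable; your key observation --- that \eqref{ineq of b} and \eqref{b_{k} uvu} are useless here because $C'_{\epsilon}=\|\eta_{\epsilon}\|_{6/5}\to\infty$ as $\epsilon\to 0$, and must be replaced by the $\epsilon$-uniform Leray-type estimate $\|{\bf B}_{k_{\epsilon}}({\bf u})\|_{V'}\le C|{\bf u}|^{1/2}\|{\bf u}\|^{3/2}$ obtained from Young's convolution inequality with $\|\eta_{\epsilon}\|_{1}=1$, the Sobolev embedding, and interpolation --- is exactly right, and your exponents ($\delta_{k}^{1/2}$ for the Stokes term, $\delta_{k}^{2/3}$ for the forcing, $\delta_{k}$ for the two noise terms, $\delta_{k}^{1/4}$ for the nonlinearity, with the H\"older pairing $(4,4/3)$ played against \eqref{finding bound L^2 V}) all check out. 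What your route buys is transparency and robustness: every term is $O(\delta_{k}^{\alpha})$ in $L^{1}$ or $L^{2}$ with constants controlled solely by the $\epsilon$-independent bound \eqref{finding bound L^2 V}, no It\^o expansion of the increment or Gronwall argument is needed, and the one point where uniformity in $\epsilon$ is delicate is treated explicitly --- if anything, more carefully than in the paper. What the paper's route buys is brevity and exact parallelism with the Galerkin case. Two minor remarks: your reduction to sequences $\epsilon_{k}\downarrow0$ is harmless, since your estimates are in fact uniform over all $\epsilon>0$ and sequential tightness along such sequences is all that Section \ref{original} uses; and the It\^o and compensated-Poisson isometries over the random interval $[T_{k},T_{k}+\delta_{k}]$ are legitimate because $T_{k}$ is a stopping time, so the indicator of $(T_{k},T_{k}+\delta_{k}]$ is predictable.
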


\begin{proof}
First, it is clear that the paths of the processes $\{{\bf u}^{\epsilon}\}_{\epsilon>0}$ are in $\mathcal{D}([0, T]; V')$.
Let $N>0$. Employing the Markov inequality, the estimate \eqref{finding bound L^2 V}, and the property $\|\cdot\|_{V'}<|\cdot|$, we obtain, for each rationals $t\in[0, T]$, 
\begin{align*}
\mathcal{P}\Big(\|{\bf u}^{\epsilon}(t)\|_{V'}>N\Big)\leq\frac{1}{N^{2}}\mathbb{E}\|{\bf u}^{\epsilon}(t)\|^{2}_{V'}\leq\frac{1}{N^{2}}\mathbb{E}|{\bf u}^{\epsilon}(t)|^{2}\leq\frac{C_2}{N^{2}}.
\end{align*}
Thus, for each rationals $t\in[0, T]$,
\begin{align*}
\lim_{N\rightarrow\infty}\limsup_{\epsilon\rightarrow 0}\mathcal{P}\Big(\|{\bf u}^{\epsilon}(t)\|_{V'}>N\Big)=0
\end{align*}
Let $(T_{\epsilon}, \delta_{\epsilon})$ be a sequence, where $T_{\epsilon}$ is a stopping time with $T_{\epsilon}+\delta_{\epsilon}\leq T$ and $\delta_{\epsilon}>0$ with $\delta_{\epsilon}\rightarrow 0$ as $\epsilon\rightarrow 0$. For each $N>0$, the Chebyshev inequality implies
\begin{align*}
&\mathcal{P}\Big(\|{\bf u}^{\epsilon}(T_{\epsilon}+\delta_{\epsilon})-{\bf u}^{\epsilon}(T_{\epsilon})\|_{V'}>N\Big)
\\&\leq\frac{1}{N^{2}}\mathbb{E}\|{\bf u}^{\epsilon}(T_{\epsilon}+\delta_{\epsilon})-{\bf u}^{\epsilon}(T_{\epsilon})\|^{2}_{V'}\leq\frac{1}{N^{2}}\mathbb{E}|{\bf u}^{\epsilon}(T_{\epsilon}+\delta_{\epsilon})-{\bf u}^{\epsilon}(T_{\epsilon})|^{2}.
\end{align*}
In addition, the It\^o formula and the Gronwall inequality imply that
\begin{align*}
\mathbb{E}|{\bf u}^{\epsilon}(T_{\epsilon}+\delta_{\epsilon})-{\bf u}^{\epsilon}(T_{\epsilon})|^{2}\leq\Big(\frac{1}{\nu}\mathbb{E}\int^{\delta_{\epsilon}}_{0}\|{\bf f}(s)\|^{2}_{V'}ds+2K\delta_{\epsilon}\Big)e^{2K\delta_{\epsilon}}\rightarrow 0
\end{align*}
as $\epsilon\rightarrow 0$. Therefore, $\|{\bf u}^{\epsilon}(T_{\epsilon}+\delta_{\epsilon})-{\bf u}^{\epsilon}(T_{\epsilon})\|_{V'}\rightarrow 0$ in probability as $\epsilon\rightarrow 0$. By Lemma \ref{Aldous}, the set $\{{\bf u}^{\epsilon}\}_{\epsilon>0}$ is tight in $\mathcal{D}([0, T]; V')$.
\end{proof}
The estimate \eqref{finding bound L^2 V} also shows that $\{{\bf u}^{\epsilon}\}_{\epsilon>0}$ is bounded in the space $L^{2}(
\Omega; L^{2}(0, T; V))$. Therefore, by an argument analogous to that in Proposition \ref{cpt}, we have the following proposition.
\begin{proposition}\label{u^{epsilon} to u}
The sequence of processes $\{{\bf u}^{\epsilon}\}_{\epsilon>0}$ forms a relative compact set in the space $L^{2}(\Omega; L^{2}(0, T; H))$.
\end{proposition}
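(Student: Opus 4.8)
The plan is to transcribe, into the $\epsilon\to 0$ setting, exactly the Aubin--Lions compactness argument that proved Proposition \ref{cpt}, with $\epsilon$ playing the role that $n$ played there. The two ingredients are already in place. On one hand, \eqref{finding bound L^2 V} gives a uniform bound of $\{{\bf u}^{\epsilon}\}_{\epsilon>0}$ in $L^{2}(\Omega;L^{2}(0,T;V))$; since $|\cdot|\leq\|\cdot\|$, this also yields $\mathbb{E}\int_{0}^{T}|{\bf u}^{\epsilon}(t)|^{2}dt\leq\mathbb{E}\int_{0}^{T}\|{\bf u}^{\epsilon}(t)\|^{2}dt\leq C_{2}$, so that the paths of ${\bf u}^{\epsilon}$ lie in $L^{2}(0,T;H)\cap\mathcal{D}([0,T];V')$ almost surely. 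On the other hand, the preceding lemma shows that $\{{\bf u}^{\epsilon}\}_{\epsilon>0}$ is tight in $\mathcal{D}([0,T];V')$. With the compact dense embeddings $V\hookrightarrow H\hookrightarrow V'$ recorded in Section \ref{pre}, Lemma \ref{Metivier cpt} is then applicable, and the conclusion is precisely the relative compactness for the strong topology $\tau_{4}$ on $\Omega_{4}=L^{2}(0,T;H)$.

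Concretely, I would realize the statement as tightness of the laws of $\{{\bf u}^{\epsilon}\}$ on $(L^{2}(0,T;H),\tau_{4})$ by an explicit construction of compact sets of high probability. Fix $\eta>0$. By the Markov inequality together with the uniform bound \eqref{finding bound L^2 V}, choose $R>0$ with $\mathcal{P}\big(\|{\bf u}^{\epsilon}\|_{L^{2}(0,T;V)}>R\big)<\eta/2$ for every $\epsilon$; by the tightness in $\mathcal{D}([0,T];V')$ choose a compact $K_{1}\subset\mathcal{D}([0,T];V')$ with $\mathcal{P}\big({\bf u}^{\epsilon}\notin K_{1}\big)<\eta/2$ for every $\epsilon$. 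Setting
\begin{align*}
K_{\eta}:=\big\{v:\ \|v\|_{L^{2}(0,T;V)}\leq R\big\}\cap K_{1},
\end{align*}
Lemma \ref{Metivier cpt} guarantees that $K_{\eta}$ is relatively compact in $L^{2}(0,T;H)$, while by construction $\mathcal{P}\big({\bf u}^{\epsilon}\in K_{\eta}\big)\geq 1-\eta$ uniformly in $\epsilon$. This is the asserted relative compactness; the extra integrability in $\omega$ needed to identify limits in the sequel follows from \eqref{finding bound L^2 V} and the higher moment bound \eqref{L^3 sup M}, which render $\int_{0}^{T}|{\bf u}^{\epsilon}|^{2}dt$ uniformly integrable.

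The hard part will be exactly this passage from the deterministic Aubin--Lions lemma to the stochastic statement: Lemma \ref{Metivier cpt} only certifies compactness of a single, fixed deterministic set, so the genuine work is assembling, for each $\eta$, the set $K_{\eta}$ that simultaneously absorbs all but $\eta$ of the mass of \emph{every} ${\bf u}^{\epsilon}$. This hinges on the uniformity in $\epsilon$ of both bounds above, which is available because the constant $C_{2}$ in \eqref{finding bound L^2 V} and the estimates underlying the tightness in $\mathcal{D}([0,T];V')$ are independent of $\epsilon$, as noted in the remark following Proposition \ref{A priori estimates of N-S M}. Once $K_{\eta}$ is built, the remaining verifications are a routine repetition of the proof of Proposition \ref{cpt}.
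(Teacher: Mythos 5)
Your proposal is correct and follows essentially the same route as the paper: the uniform bound \eqref{finding bound L^2 V} giving boundedness in $L^{2}(\Omega;L^{2}(0,T;V))$, tightness in $\mathcal{D}([0,T];V')$ from the preceding lemma, and then the Aubin--Lions type compactness of Lemma \ref{Metivier cpt}, exactly as invoked in Proposition \ref{cpt}. Your explicit construction of the high-probability sets $K_{\eta}$ merely spells out the ``argument analogous to that in Proposition \ref{cpt}'' which the paper leaves implicit, and is in fact a more careful rendering of the tightness-of-laws sense in which the paper actually uses this statement in the subsequent Prohorov argument.
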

Denote by ${\bf u}$ the limit (along a subsequence) of $\{{\bf u}^{\epsilon}\}_{\epsilon>0}$ in  $L^{2}(\Omega; L^{2}(0, T; H))$. The next step is to identify that ${\bf u}$ is indeed a solution to the equation \eqref{N-S M original} in the sense of Definition \ref{Solution of MG problem}.

In order to carry out the arguments similar to those in Section \ref{sec exist}, we introduce the following functions.

Let $\phi(t, i)$ be a real-valued, smooth function with compact support (in each variable). For $\rho\in\mathcal{D}({\bf A})\subseteq V$ with $\nabla\rho\in (L^{\infty}(G))^{3}$, $0\leq s\leq t$, and each element $\omega=(u, i)\in\Omega^{\dagger}$, define ${\bf M}^{\phi}(t)$ 
\begin{align}\label{M'}
&:=\phi(\langle {u}(t), \rho\rangle_{V}, i(t))-\int^{t}_{0}\sum_{j=1}^{m}\gamma_{i(r-)j}\phi(\langle {u}(r),\rho\rangle_{V}, j)dr
\\&-\int^{t}_{0}\Big(\phi^{'}(\langle {u}(r), \rho\rangle_{V}, i(r))\langle -\nu{\bf A}u(r)-{\bf B}({ u}(r))+{\bf f}(r),\rho\rangle_{V}\Big)dr\nonumber
\\&-\frac{1}{2}\int\!\!\phi^{''}\!(\langle {u}(r),\rho\rangle_{V}, i(r))\big(\rho, \sigma(r, {u}(r), i(r))Q\sigma^{*}(r, { u}(r), i(r))\rho\big)_{H}dr\nonumber
\\&-\int\Big(\phi(\langle{u}(r)+{\bf G}(r, {u}(r), i(r), z),\rho\rangle_{V}, i(r))-\phi(\langle{u}(r),\rho\rangle_{V}, i(r))\nonumber
\\&\qquad\quad-\phi^{'}(\langle{u}(r),\rho\rangle_{V}, i(r))\cdot\langle{\bf G}(r, {u}(r), i(r), z), \rho\rangle_{V}\Big)\nu_{1}(dz)dr,\nonumber
\end{align}
where the second last integral is over $[0, t]$, and the last integral is over $[0, t]\times Z$, and ${\bf M}_{\epsilon}^{\phi}(t)$ admits a similar 
expression with ${\bf B}_{k_{\epsilon}}$ replacing ${\bf B}$. 

Similar to ${\bf M2}$ in Section \ref{sec exist}, we need to prove that ${\bf M}^{\phi}_{\epsilon}(t)$ converges to ${\bf M}^{\phi}(t)$ as $\epsilon$ tends to 0.
\begin{proposition}\label{M2'}
Let ${\bf M}^{\phi}_{\epsilon}(t)$ and ${\bf M}^{\phi}(t)$ be as above. Then
\begin{align*}
\lim_{\epsilon\rightarrow 0}{\bf M}^{\phi}_{\epsilon}(t)={\bf M}^{\phi}(t).
\end{align*}
\end{proposition}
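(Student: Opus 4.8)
The plan is to exploit the fact that ${\bf M}^{\phi}_{\epsilon}(t)$ and ${\bf M}^{\phi}(t)$ differ only through the nonlinear term, since every other term in \eqref{M'} involves neither ${\bf B}$ nor ${\bf B}_{k_{\epsilon}}$. Subtracting the two expressions, all common terms cancel and, for each fixed generic element $\omega=(u,i)\in\Omega^{\dagger}$ (hence fixed $u\in\Omega^{\ast}$, and fixed $\rho$, $\phi$), we are left with
\begin{align*}
{\bf M}^{\phi}_{\epsilon}(t)-{\bf M}^{\phi}(t)
=\int^{t}_{0}\phi'(\langle u(r),\rho\rangle_{V},i(r))\,\langle {\bf B}_{k_{\epsilon}}(u(r))-{\bf B}(u(r)),\rho\rangle_{V}\,dr.
\end{align*}
Thus it suffices to show that this single integral tends to $0$ as $\epsilon\to 0$.

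First I would rewrite the integrand using the definitions of the trilinear form. Since ${\bf B}$ and ${\bf B}_{k_{\epsilon}}$ differ only by the mollification of the first slot, linearity gives
\begin{align*}
\langle {\bf B}_{k_{\epsilon}}(u(r))-{\bf B}(u(r)),\rho\rangle_{V}
=b(k_{\epsilon}u(r),u(r),\rho)-b(u(r),u(r),\rho)
=b(k_{\epsilon}u(r)-u(r),u(r),\rho).
\end{align*}
The key step is to apply the antisymmetry \eqref{negative of b} so as to move the derivative off the merely $L^{2}$-regular factor $u$ and onto the smooth test vector $\rho$, namely
\begin{align*}
b(k_{\epsilon}u(r)-u(r),u(r),\rho)=-b(k_{\epsilon}u(r)-u(r),\rho,u(r)).
\end{align*}
Because $\nabla\rho\in(L^{\infty}(G))^{3}$ by hypothesis, the generalized H\"older inequality together with the Cauchy--Schwarz inequality then yields the pathwise bound
\begin{align*}
|\langle {\bf B}_{k_{\epsilon}}(u(r))-{\bf B}(u(r)),\rho\rangle_{V}|
\leq\|\nabla\rho\|_{\infty}\,|k_{\epsilon}u(r)-u(r)|\,|u(r)|,
\end{align*}
where $|\cdot|$ denotes the $H$-norm.

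The conclusion then follows from the Lebesgue Dominated Convergence Theorem applied in the time variable. For almost every $r\in[0,T]$ one has $u(r)\in H$, and extending $u(r)$ by zero outside $G$ and invoking the global counterpart of Lemma \ref{convolution}(2) gives $|k_{\epsilon}u(r)-u(r)|\to 0$; multiplying by the uniformly bounded factor $\phi'$ and by $|u(r)|$ shows the integrand converges to $0$ pointwise in $r$. For domination, Lemma \ref{convolution}(3) (again in its zero-extension form, so that $|k_{\epsilon}u(r)|\leq|u(r)|$) gives
\begin{align*}
\Big|\phi'(\langle u(r),\rho\rangle_{V},i(r))\,\langle {\bf B}_{k_{\epsilon}}(u(r))-{\bf B}(u(r)),\rho\rangle_{V}\Big|
\leq 2\|\phi'\|_{\infty}\|\nabla\rho\|_{\infty}\,|u(r)|^{2},
\end{align*}
whose right-hand side is integrable on $[0,T]$ because $u\in\Omega^{\ast}\subset L^{2}(0,T;H)$. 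Dominated convergence then forces the integral to vanish.

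The main obstacle—and the reason for the extra hypothesis $\nabla\rho\in(L^{\infty}(G))^{3}$—is the regularity mismatch in the nonlinear term: the estimate \eqref{b_{k} uvu} used throughout Section \ref{sec exist} controls $|b(k_{\epsilon}u,v,u)|$ by $\|u\|\,|u|\,\|v\|$, which would require the $V$-norm of the difference $k_{\epsilon}u-u$ and does \emph{not} obviously vanish at the level of $L^{2}(0,T;H)$-convergence alone. Transferring the derivative onto $\rho$ via \eqref{negative of b} is precisely what circumvents this, reducing matters to the $L^{2}$-convergence of the mollification supplied by Lemma \ref{convolution}. A secondary technical point worth flagging is that Lemma \ref{convolution} is stated for $L^{p}_{loc}(U)$; to obtain genuine $L^{2}(G)$-convergence and the dominating bound $|k_{\epsilon}u|\leq|u|$, one should extend $u(r)$ by zero to $\mathbb{R}^{3}$ and apply the mollifier estimates there, which is harmless since these bounds survive under Young's convolution inequality.
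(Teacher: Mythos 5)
Your proof is correct and follows essentially the same route as the paper's: both transfer the derivative from $u$ onto the test function $\rho$ via the antisymmetry \eqref{negative of b}, exploit the hypothesis $\nabla\rho\in (L^{\infty}(G))^{3}$ to get the pathwise bound $\|\nabla\rho\|_{\infty}\,|k_{\epsilon}u-u|\,|u|$, and conclude from the mollifier convergence of Lemma \ref{convolution} together with dominated convergence in time. The only cosmetic difference is that you use trilinearity in the first slot to form $b(k_{\epsilon}u-u,\rho,u)$ in one stroke, whereas the paper applies antisymmetry to $b(k_{\epsilon}u,u,\rho)$ first and then splits the third slot as $u=(u-k_{\epsilon}u)+k_{\epsilon}u$, estimating the two pieces separately.
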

\begin{proof}
The convergence of the terms other than the nonlinear term ${\bf B}_{k_{\epsilon}}$ follows a similar argument as in the proof of assertion $\bf M2$. 

Recall the definitions of ${\bf B}$ and ${\bf B}_{k_{\epsilon}}$ from Section \ref{pre}. It suffices to show that
\begin{align*}
\lim_{\epsilon\rightarrow 0}\int^{t}_{0}b(k_{\epsilon}u(r), u(r), \rho)dr=\int^{t}_{0}b(u(r), u(r), \rho)dr.
\end{align*}

Notice that the function $u(r)$ is a generic element in the space $\Omega^{\ast}$, therefore, $u(r)\in H$. Then it follows from (2) of Lemma \ref{convolution} that
$k_{\epsilon}u\rightarrow u$ 
as $\epsilon\rightarrow 0$, in $H$. It follows from \eqref{def of b_{k}} and \eqref{negative of b} that
\begin{align*}
\int^{t}_{0}b(k_{\epsilon}u(r), u(r), \rho)dr
=-\sum_{i, j=1}^{3}\int^{t}_{0}\int_{G}(\eta_{\epsilon}\ast u)_{i}(r)\frac{\partial\rho_{j}(x)}{\partial x_{i}}u_{j}(r)dxdr.
\end{align*}
\begin{align}
&\text{Consider}\,\,
\sum_{i, j=1}^{3}\int^{t}_{0}\int_{G}(\eta_{\epsilon}\ast u)_{i}(r)\frac{\partial\rho_{j}(x)}{\partial x_{i}}u_{j}(r)dxdr\nonumber
\\&\,\,\,\,\,\,\,\,=\sum_{i, j=1}^{3}\int^{t}_{0}\int_{G}(\eta_{\epsilon}\ast u)_{i}(r)\frac{\partial\rho_{j}(x)}{\partial x_{i}}[u_{j}(r)-(\eta_{\epsilon}\ast u)_{j}(r)]dxdr\label{aux 001}
\\&\,\,\,\,\,\,\,\,\quad+\sum_{i, j=1}^{3}\int^{t}_{0}\int_{G}(\eta_{\epsilon}\ast u)_{i}(r)\frac{\partial\rho(x)_{j}}{\partial x_{i}}(\eta_{\epsilon}\ast u)_{j}(r)dxdr.\nonumber
\end{align}
The former term on the right of \eqref{aux 001} bounded by
\begin{align*}
&\|\nabla\rho\|_{L^{\infty}(G)}\int^{T}_{0}|k_{\epsilon}u(r)||u(r)-k_{\epsilon}u(r)|dr
\\&\leq\|\nabla\rho\|_{L^{\infty}(G)}\Big(\int^{T}_{0}|k_{\epsilon}u(r)|^{2}dr\Big)^{2}\Big(\int^{T}_{0}|u(r)-k_{\epsilon}u(r)|^{2}dr\Big)^{2}\rightarrow 0,
\end{align*}
as $\epsilon\rightarrow 0$ since $k_{\epsilon}u\rightarrow u$. The latter term is bounded by
\begin{align*}
\|\nabla\rho\|_{L^{\infty}(G)}\int^{T}_{0}|k_{\epsilon}u(r)|^{2}dr<C
\end{align*}
for an appropriate constant $C$, which is independent of $\epsilon$. Therefore, by the Lebesgue Dominated Convergence Theorem, the latter term converges to
\begin{align*}
\sum_{i, j=1}^{3}\int^{t}_{0}\int_{G}u_{i}(r)\frac{\partial\rho_{j}(x)}{\partial x_{i}} u_{j}(r)dxdr.
\end{align*}
Hence, we conclude that as $\epsilon \to 0$, 
\begin{align*}
&\int^{t}_{0}b(k_{\epsilon}u(r), u(r), \rho)dr=-\int^{t}_{0}b(k_{\epsilon}u(r), \rho, u(r))dr  \\
& \to -\int^{t}_{0}b(u(r), \rho, u(r))dr=\int^{t}_{0}b(u(r), u(r), \rho)dr.
\end{align*}
\end{proof}

\begin{lemma}\label{lemma conti'}
The function ${\bf M}^{\phi}(t)$ is continuous in the $(\tau^{\dagger})$-topology.
\end{lemma}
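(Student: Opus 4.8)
The plan is to mirror, term by term, the argument of Lemma~\ref{lemma conti}, since \eqref{M'} differs from \eqref{M} only in that the \emph{unregularized} operator ${\bf B}$ replaces ${\bf B}_{k_{\epsilon}}$. As in Lemma~\ref{lemma conti}, I would first reduce to proving continuity in the $\tau$-topology on $\Omega^\ast$: the Markov component $\mathfrak{r}$ enters $\Omega^\dagger=\Omega^\ast\times\mathcal{D}([0,T];\mathcal{S})$ as an independent Skorokhod factor and contributes no convergence issue, so I fix a sequence $u_n\to u$ in $\tau$ and show ${\bf M}^{\phi}(u_n(t))\to{\bf M}^{\phi}(u(t))$ by passing to the limit in each summand.

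All summands other than the nonlinear one are handled exactly as in Lemma~\ref{lemma conti}. Convergence in $\tau$ forces convergence in $\tau_4$ (the strong $L^2(0,T;H)$ topology), and the verbatim reasoning applies: pointwise convergence from Lemma~\ref{lemma phi} for the evaluation and Markov-generator terms; uniform integrability from the $\tau_2$-bound for the Stokes term $\langle-\nu{\bf A}u,\rho\rangle_V$ and the force term; Hypothesis ${\bf H2}$ together with $\tau_4$-convergence for the $\sigma$-term; and the mean-value-theorem plus Hypothesis ${\bf H3}$ domination for the ${\bf G}$-term. None of these uses the regularization, so they transfer without change.

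The crux is the nonlinear term $\langle{\bf B}(u),\rho\rangle_V$, where \eqref{b_{k}uvw} is unavailable. Here I would exploit the extra smoothness imposed on $\rho$, namely $\nabla\rho\in(L^\infty(G))^3$: integrating by parts via \eqref{negative of b} gives $\langle{\bf B}(u),\rho\rangle_V=b(u,u,\rho)=-b(u,\rho,u)$, hence the $H$-norm bound $|b(u,\rho,u)|\le C\|\nabla\rho\|_{L^\infty(G)}|u|^2$. The key decomposition is
\begin{align*}
b(u_n,\rho,u_n)-b(u,\rho,u)=b(u_n-u,\rho,u_n)+b(u,\rho,u_n-u),
\end{align*}
which yields, by the same integration by parts and the Cauchy--Schwarz inequality,
\begin{align*}
\int_s^t\big|b(u_n,\rho,u_n)-b(u,\rho,u)\big|\,dr\le C\|\nabla\rho\|_{L^\infty(G)}\Big(\int_s^t|u_n-u|^2\,dr\Big)^{1/2}\Big(\int_s^t(|u_n|+|u|)^2\,dr\Big)^{1/2}.
\end{align*}
The first factor tends to $0$ because $u_n\to u$ in $\tau_4$, while the second stays bounded since $\{u_n\}$ is $L^2(0,T;H)$-convergent and $u\in\Omega^\ast$; multiplying by the bounded prefactor $\phi'(\langle u_n,\rho\rangle_V,i)$ and applying the bounded convergence theorem delivers the limit.

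The main obstacle is precisely this nonlinear term: in three dimensions the trilinear form $b$ admits no continuity estimate strong enough to pass to the limit against a generic $L^2(0,T;V)$ element, and the mollified estimate \eqref{b_{k}uvw} used in Lemma~\ref{lemma conti} is no longer at hand. The resolution is structural rather than analytic, namely that ${\bf M}^{\phi}$ is tested only against the single fixed smooth $\rho$ with $\nabla\rho\in L^\infty$, which lets me move the derivative off $u$ and onto $\rho$ and thereby reduce the whole estimate to the $H$-norm, where the strong $\tau_4$-convergence is exactly what is required.
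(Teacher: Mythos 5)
Your proposal is correct and takes essentially the same route as the paper: the paper reduces everything except the nonlinear term to the argument of Lemma \ref{lemma conti} and then disposes of the ${\bf B}$-term by citing the argument of Temam (Lem.\ 3.2, Ch.\ III), which is precisely the argument you wrote out — use \eqref{negative of b} to move the derivative onto the fixed test function $\rho$ with $\nabla\rho\in(L^{\infty}(G))^{3}$, decompose $b(u_n,\rho,u_n)-b(u,\rho,u)$ bilinearly, and conclude via Cauchy--Schwarz from the strong $L^{2}(0,T;H)$ (i.e.\ $\tau_{4}$) convergence. In effect you have supplied the details that the paper delegates to the citation, so there is no gap and no genuine divergence in method.
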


\begin{proof}
The proof of this lemma follows from the argument for proving Lemma \ref{lemma conti}. In fact, everything follows along the same lines except for the ${\bf B}$. The convergence of ${\bf B}$ follows from an argument analogous to the proof of \cite[Lem. 3.2, Ch. III]{Temam}.
\end{proof}

Denote by $\mu_{\epsilon}$ the distribution of $({\bf u}^{\epsilon}(t), \mathfrak{r}(t))$ and $\mu$ the distribution of $({\bf u}(t), \mathfrak{r}(t))$.
\begin{lemma}\label{lemma 2'}
Suppose that the Hypotheses $\bf H$ is fulfilled, $\mathbb{E}|{\bf u}_{0}|^{3}<\infty$, and ${\bf f}\in L^{3}(0, T; V')$.
There exist some $\delta>0$ such that
\begin{align*}
\sup_{\epsilon>0}\mathbb{E}^{\mu_{\epsilon}}\big[|{\bf M}^{\phi}|^{1+\delta}\big]\leq C,
\end{align*}
where $C$ is an appropriate constant.
\end{lemma}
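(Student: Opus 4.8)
The plan is to follow the proof of Lemma \ref{lemma 2} almost verbatim, decomposing $|{\bf M}^{\phi}(t)|^{1+\delta}$ via the elementary inequality $(\sum_{i=1}^{5}a_{i})^{1+\delta}\leq 5^{\delta}\sum_{i=1}^{5}a_{i}^{1+\delta}$ into the contributions of the two $\phi$-endpoint terms, the Stokes term ${\bf A}$, the nonlinear term ${\bf B}$, the continuous-noise term $\sigma$, and the jump-noise term ${\bf G}$. The two endpoint terms are bounded by $2\cdot 5^{\delta}\|\phi\|_{\infty}^{1+\delta}$ since $\phi$ is bounded, and the ${\bf A}$-term, $\sigma$-term, and ${\bf G}$-term are estimated exactly as in Lemma \ref{lemma 2} using $|\langle\nu{\bf A}u,\rho\rangle_{V}|\leq\nu\|u\|\,\|{\bf A}\rho\|_{V'}$, Hypothesis $\bf H1$, and Hypothesis $\bf H3$, respectively. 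The only structural novelty is that ${\bf M}^{\phi}$ in \eqref{M'} carries the \emph{unregularized} operator ${\bf B}$ rather than ${\bf B}_{k_{\epsilon}}$, so the estimate \eqref{b_{k}uvw} used in Lemma \ref{lemma 2} is no longer available and must be replaced.

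I would handle the nonlinear term by exploiting the smoothness of the test function: since $\rho\in\mathcal{D}({\bf A})$ with $\nabla\rho\in(L^{\infty}(G))^{3}$, the antisymmetry \eqref{negative of b} gives $\langle{\bf B}(u(r)),\rho\rangle_{V}=b(u(r),u(r),\rho)=-b(u(r),\rho,u(r))$, whence
\begin{align*}
|\langle{\bf B}(u(r)),\rho\rangle_{V}|=\Big|\sum_{i,j=1}^{3}\int_{G}u_{i}(r)\frac{\partial\rho_{j}}{\partial x_{i}}u_{j}(r)\,dx\Big|\leq C\|\nabla\rho\|_{L^{\infty}(G)}\,|u(r)|^{2}.
\end{align*}
Consequently $\int_{s}^{t}|\langle{\bf B}(u(r)),\rho\rangle_{V}|^{1+\delta}dr\leq C\,T\sup_{0\leq r\leq T}|u(r)|^{2(1+\delta)}$, and applying $\mathbb{E}^{\mu_{\epsilon}}$ identifies the right-hand side with $C\,T\,\mathbb{E}\sup_{0\leq r\leq T}|{\bf u}^{\epsilon}(r)|^{2(1+\delta)}$. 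Choosing $\delta$ so that $2(1+\delta)\leq3$, i.e. $\delta\leq\tfrac12$, this quantity is controlled by the third-moment estimate \eqref{L^3 sup M}.

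The decisive point---and the reason the supremum over $\epsilon$ is finite---is that, by the remark following Proposition \ref{A priori estimates of N-S M}, the constants in \eqref{L^2 M} and \eqref{L^3 sup M} are \emph{independent of} $\epsilon$; since the mollified initial data satisfy $\mathbb{E}|{\bf u}^{\epsilon}_{0}|^{3}\leq\mathbb{E}|{\bf u}_{0}|^{3}<\infty$ uniformly in $\epsilon$, every bound produced above is uniform in $\epsilon$. Collecting the five contributions and taking, say, $\delta=\tfrac15$ (which simultaneously meets the requirement $\delta<1$ of the ${\bf A}$-, $\sigma$-, and ${\bf G}$-terms and the requirement $\delta\leq\tfrac12$ of the ${\bf B}$-term) yields $\sup_{\epsilon>0}\mathbb{E}^{\mu_{\epsilon}}[|{\bf M}^{\phi}|^{1+\delta}]\leq C$. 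I expect the main obstacle to be precisely the nonlinear term: one must avoid the $V$-norm of $u$, which is only square-integrable in time and not pointwise bounded, by transferring a derivative onto $\rho$ through \eqref{negative of b}, so that the bound rests solely on the pathwise supremum of the $H$-norm furnished by \eqref{L^3 sup M}.
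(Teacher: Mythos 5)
Your proof is correct, and your uniform-in-$\epsilon$ bookkeeping (the constants in \eqref{L^2 M} and \eqref{L^3 sup M} are independent of $\epsilon$, and $\mathbb{E}|k_{\epsilon}{\bf u}_{0}|^{3}\leq\mathbb{E}|{\bf u}_{0}|^{3}$ uniformly) matches the paper's; but your treatment of the nonlinear term, which is indeed the only term requiring new work beyond Lemma \ref{lemma 2}, takes a genuinely different route. The paper does \emph{not} transfer the derivative onto $\rho$: it invokes the interpolation estimate $\|{\bf B}(u(r))\|_{V'}\leq C|u(r)|^{1/2}\|u(r)\|^{3/2}$ from \cite[Eq.\ (3.74)]{Temam} and then runs a two-level H\"older argument, pulling out $\sup_{0\leq t\leq T}|u(t)|^{(1+\delta)/2}$, applying H\"older in $\omega$ with exponents tuned so that $3(1+\delta)q=4$, and reducing the time integral to $\mathbb{E}\int\|{\bf u}^{\epsilon}(r)\|^{2}_{V}dr$; the admissible range there is $\delta\leq\frac{1}{11}$ (so that $2\frac{1+\delta}{1-3\delta}=3$ matches the third-moment bound \eqref{L^3 sup M}), and the argument uses both \eqref{L^3 sup M} and \eqref{L^2 M}. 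Your route --- writing $\langle{\bf B}(u(r)),\rho\rangle_{V}=-b(u(r),\rho,u(r))$ via \eqref{negative of b} and bounding this by $C\|\nabla\rho\|_{L^{\infty}(G)}|u(r)|^{2}$ --- is legitimate precisely because the test class in Section \ref{original} is restricted to $\rho\in\mathcal{D}({\bf A})$ with $\nabla\rho\in(L^{\infty}(G))^{3}$; this is the same derivative-transfer device the paper itself uses in the proof of Proposition \ref{M2'}, only the paper does not exploit it in this lemma. Your version is more elementary: it rests solely on the sup-in-time $H$-moment \eqref{L^3 sup M}, avoids Temam's inequality, and admits the wider range $\delta\leq\frac{1}{2}$ (so $\delta=\frac{1}{5}$ is fine, whereas the paper must take $\delta\leq\frac{1}{11}$). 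What the paper's argument buys in exchange is that it never uses $\nabla\rho\in L^{\infty}$, so its moment bound holds for every $\rho\in\mathcal{D}({\bf A})$. Two cosmetic points, neither affecting correctness: in \eqref{M'} there is only one $\phi$-endpoint term (the canonical expression is ${\bf M}^{\phi}(t)$, not a difference ${\bf M}^{\phi}(t)-{\bf M}^{\phi}(s)$), the second bounded contribution being the generator term $\int_{0}^{t}\sum_{j}\gamma_{i(r-)j}\phi\,dr\leq T\|\phi\|_{\infty}\max_{i}\sum_{j}|\gamma_{ij}|$; and $\sup_{0\leq r\leq T}|u(r)|$ should be read as an essential supremum for the generic element $u\in\Omega^{\ast}$, which is exactly what the law $\mu_{\epsilon}$ of ${\bf u}^{\epsilon}$ controls through \eqref{L^3 sup M}.
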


\begin{proof}
The proof follows from the same lines as Lemma \ref{lemma 2} except for the nonlinear term ${\bf B}$. It follows from \cite[Eq. (3.74)]{Temam} that
\begin{align*}
\|{\bf B}(u(r))\|_{V'}\leq C|u(r)|^{\frac{1}{2}}\|u(r)\|^{\frac{3}{2}}
\end{align*}
for an appropriate constant $C$. Hence,
\begin{align*}
&\mathbb{E}^{\mu_{\epsilon}}\Big\{\big|\int^{t}_{s}\langle{\bf B}({u}(r)),\rho\rangle_{V}dr\big|^{1+\delta}\Big\}
\\&\leq \|\rho\|^{1+\delta}_{V}\mathbb{E}^{\mu_{\epsilon}}\Big\{\big(\int^{t}_{s}\|{u}(r)\|^{\frac{3}{2}}_{V}|{u}(r)|^{\frac{1}{2}}_{H}dr\big)^{1+\delta}\Big\}
\\&\leq \|\rho\|^{1+\delta}_{V}\Big\{\mathbb{E}^{\mu_{\epsilon}}\big(\sup_{0\leq t\leq T}|{u}(t)|^{\frac{1+\delta}{2}}_{H}\big)^{p}\Big\}^{\frac{1}{p}}\Big\{\mathbb{E}^{\mu_{\epsilon}}\big(\int^{t}_{s}\|{u}(r)\|^{\frac{3(1+\delta)}{2}}_{V}dr\big)^{q}\Big\}^{\frac{1}{q}},
\end{align*}
where $\frac{1}{p}+\frac{1}{q}=1$. Choosing $q$ such that $3(1+\delta)q=4$,  we have
\begin{align}
&\mathbb{E}^{\mu_{\epsilon}}\Big\{\big|\int^{t}_{s}\langle{\bf B}({u}(r)),\rho\rangle_{V}\big|^{1+\delta}\Big\}\label{mg s_20'}
\\&\leq \|\rho\|^{1+\delta}_{V}\Big\{\mathbb{E}^{\mu_{\epsilon}}\sup_{0\leq t\leq T}|{u}(t)|^{2(\frac{1+\delta}{1-3\delta})}_{H}\Big\}^{\frac{1-3\delta}{4}}\Big\{\mathbb{E}^{\mu_{\epsilon}}\int^{t}_{s}\|{u}(r)\|^{2}_{V}dr\Big\}^{\frac{3(1+\delta)}{4}}.\nonumber
\\&= \|\rho\|^{1+\delta}_{V}\Big\{\mathbb{E}\sup_{0\leq t\leq T}|{\bf u}^{\epsilon}(t)|^{2(\frac{1+\delta}{1-3\delta})}_{H}\Big\}^{\frac{1-3\delta}{4}}\Big\{\mathbb{E}\int^{t}_{s}\|{\bf u}^{\epsilon}(r)\|^{2}_{V}dr\Big\}^{\frac{3(1+\delta)}{4}}\nonumber
\end{align}
Taking $\delta=\frac{1}{11}$, we have $2(\frac{1+\delta}{1-3\delta})=3$. By \eqref{L^3 sup M}, we have the following estimate
\begin{align*}
\mathbb{E}\sup_{0\leq t\leq T}|{\bf u}^{\epsilon}(t)|^{3}\leq C(\mathbb{E}|{\bf u}_{0}|^{3}, \mathbb{E}\int^{T}_{0}\|{\bf f}(s)\|^{3}_{V'}, \nu, K, T).
\end{align*}
Therefore, The first expectation on the right of \eqref{mg s_20'} will have a uniform bound by \eqref{L^3 sup M} if we further assume that $\mathbb{E}|{\bf u}_{0}|^{3}<\infty$. The boundedness of $\mathbb{E}\int^{t}_{s}\|{\bf u}^{\epsilon}(r)\|^{2}_{V}dr$ follows from \eqref{L^2 M} and (3) of Lemma \ref{convolution}. Therefore, \eqref{mg s_20'} implies that, if $\delta\leq\frac{1}{11}$,
$$
\sup_{\epsilon>0}\mathbb{E}^{\mu_{\epsilon}}\Big\{\big|\int^{t}_{0}\langle{\bf B}({u}(r)),\rho\rangle_{V}\big|^{1+\delta}dr\Big\}\leq C_{{\bf B}}
\eqno$$
\end{proof}

The results of Lemmata \ref{lemma conti'} and \ref{lemma 2'} together with Lemma \ref{prop conv} imply the following proposition.
\begin{proposition}\label{M3'}
Let ${\bf u}^{\epsilon}$ be the solution \eqref{N-S M} and $\bf u$ the limit of $\{{\bf u}^{\epsilon}\}_{\epsilon>0}$ in $L^{2}(\Omega; L^{2}(0, T; H))$. Denote by $\mu_{\epsilon}$ the distribution of $({\bf u}^{\epsilon}(t), \mathfrak{r}(t))$ and $\mu$ the distribution of $({\bf u}(t), \mathfrak{r}(t))$. Then
\begin{align*}
\lim_{\epsilon\rightarrow 0}\mathbb{E}^{\mu_{\epsilon}}{\bf M}^{\phi}_{\epsilon}(t)=\mathbb{E}^{\mu}{\bf M}^{\phi}(t).
\end{align*}
\end{proposition}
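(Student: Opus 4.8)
The plan is to split the difference by inserting the intermediate quantity $\mathbb{E}^{\mu_{\epsilon}}{\bf M}^{\phi}(t)$, so that the change of measure and the change of functional are separated:
\begin{align*}
\mathbb{E}^{\mu_{\epsilon}}{\bf M}^{\phi}_{\epsilon}(t) - \mathbb{E}^{\mu}{\bf M}^{\phi}(t)
&= \underbrace{\mathbb{E}^{\mu_{\epsilon}}\big[{\bf M}^{\phi}_{\epsilon}(t) - {\bf M}^{\phi}(t)\big]}_{(\mathrm{I})} \\
&\quad + \underbrace{\mathbb{E}^{\mu_{\epsilon}}{\bf M}^{\phi}(t) - \mathbb{E}^{\mu}{\bf M}^{\phi}(t)}_{(\mathrm{II})}.
\end{align*}
Term $(\mathrm{II})$ keeps the limit functional ${\bf M}^{\phi}$ fixed and changes only the law, while term $(\mathrm{I})$ keeps the law $\mu_{\epsilon}$ fixed and merely replaces ${\bf B}_{k_{\epsilon}}$ by ${\bf B}$. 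I would show that each tends to $0$ as $\epsilon\to 0$.

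For term $(\mathrm{II})$ I would apply Lemma \ref{prop conv} with $f={\bf M}^{\phi}(t)$ and $\mathcal{P}_{n}=\mu_{\epsilon}$, exactly as in the treatment of $\bf M3$ in Section \ref{sec exist}. Its two hypotheses are supplied by the lemmata just proved: continuity of ${\bf M}^{\phi}$ in the $\tau^{\dagger}$-topology is Lemma \ref{lemma conti'}, and the uniform moment bound $\sup_{\epsilon>0}\mathbb{E}^{\mu_{\epsilon}}[|{\bf M}^{\phi}|^{1+\delta}]\leq C$ is Lemma \ref{lemma 2'}. Combined with the weak convergence $\mu_{\epsilon}\Rightarrow\mu$ along the subsequence along which ${\bf u}^{\epsilon}\to{\bf u}$ (whose tightness was established above), this yields $(\mathrm{II})\to 0$.

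For term $(\mathrm{I})$ I would transport the expectation back to the original probability space: since $\mu_{\epsilon}$ is the law of $({\bf u}^{\epsilon},\mathfrak{r})$, the canonical functionals evaluated under $\mu_{\epsilon}$ coincide with those of $({\bf u}^{\epsilon},\mathfrak{r})$ under $\mathcal{P}$. Comparing \eqref{M'} with its ${\bf B}_{k_{\epsilon}}$-analog, the only differing contribution is the nonlinear one, so
\begin{align*}
(\mathrm{I}) &= \mathbb{E}\int_{0}^{t}\phi'(\langle {\bf u}^{\epsilon}(r),\rho\rangle_{V},\mathfrak{r}(r))\\
&\qquad\times\langle {\bf B}_{k_{\epsilon}}({\bf u}^{\epsilon}(r)) - {\bf B}({\bf u}^{\epsilon}(r)),\rho\rangle_{V}\,dr.
\end{align*}
Using \eqref{negative of b} to move the derivative onto $\rho$, the bracket becomes $-b(k_{\epsilon}{\bf u}^{\epsilon}-{\bf u}^{\epsilon},\rho,{\bf u}^{\epsilon})$, which is bounded by $\|\nabla\rho\|_{L^{\infty}(G)}\,|k_{\epsilon}{\bf u}^{\epsilon}-{\bf u}^{\epsilon}|\,|{\bf u}^{\epsilon}|$ because $\nabla\rho\in(L^{\infty}(G))^{3}$. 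After Cauchy--Schwarz in $r$ and the a priori $L^{2}(0,T;V)$-bound on ${\bf u}^{\epsilon}$ coming from \eqref{L^2 M}, controlling $(\mathrm{I})$ reduces to showing $\mathbb{E}\int_{0}^{T}|k_{\epsilon}{\bf u}^{\epsilon}(r)-{\bf u}^{\epsilon}(r)|^{2}\,dr\to 0$.

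This last convergence is the main obstacle, since it is a genuine double limit: both the mollifier $k_{\epsilon}$ and its argument ${\bf u}^{\epsilon}$ vary with $\epsilon$, so part (2) of Lemma \ref{convolution} does not apply directly to ${\bf u}^{\epsilon}$. I would resolve it through the decomposition
\begin{align*}
k_{\epsilon}{\bf u}^{\epsilon} - {\bf u}^{\epsilon} = k_{\epsilon}({\bf u}^{\epsilon}-{\bf u}) + (k_{\epsilon}{\bf u}-{\bf u}) + ({\bf u}-{\bf u}^{\epsilon}).
\end{align*}
The first summand is bounded by $|{\bf u}^{\epsilon}-{\bf u}|$ via the contraction property (3) of Lemma \ref{convolution}, and together with the third summand it tends to $0$ in $L^{2}(\Omega;L^{2}(0,T;H))$ by the strong convergence ${\bf u}^{\epsilon}\to{\bf u}$ of Proposition \ref{u^{epsilon} to u}; the middle summand tends to $0$ by part (2) of Lemma \ref{convolution} combined with the dominated convergence theorem, the domination being by $|{\bf u}|\in L^{2}(\Omega;L^{2}(0,T;H))$. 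Hence $(\mathrm{I})\to 0$ as well, and combining the two estimates gives the claim.
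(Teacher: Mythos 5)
Your proof is correct, and half of it coincides with the paper's: the paper disposes of Proposition \ref{M3'} in a single sentence, citing exactly the three ingredients you use for term $(\mathrm{II})$ --- continuity of ${\bf M}^{\phi}$ in the $\tau^{\dagger}$-topology (Lemma \ref{lemma conti'}), the uniform $(1+\delta)$-moment bound (Lemma \ref{lemma 2'}), and Lemma \ref{prop conv} applied along the weakly convergent subsequence $\mu_{\epsilon_n}\Rightarrow\mu$. Where you genuinely depart from the paper is term $(\mathrm{I})$: the paper never estimates $\mathbb{E}^{\mu_{\epsilon}}\big[{\bf M}^{\phi}_{\epsilon}-{\bf M}^{\phi}\big]$ directly; instead it leans on Proposition \ref{M2'}, which is a pointwise statement on the canonical path space --- the path $u$ is held \emph{fixed} while only the mollifier varies --- and the passage from that pointwise convergence to convergence of expectations under the \emph{varying} laws $\mu_{\epsilon}$ is left implicit (to make it rigorous one would need, e.g., uniform convergence of $k_{\epsilon}$ on compact subsets of $L^{2}(0,T;H)$ combined with tightness of $\{\mu_{\epsilon}\}$ and the moment bound). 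Your argument confronts precisely this double limit, which you correctly identify as the main obstacle: transporting $(\mathrm{I})$ back to the original probability space, reducing it via the antisymmetry \eqref{negative of b}, the bound by $\|\nabla\rho\|_{L^{\infty}(G)}$, Cauchy--Schwarz and \eqref{L^2 M} to $\mathbb{E}\int_{0}^{T}|k_{\epsilon}{\bf u}^{\epsilon}(r)-{\bf u}^{\epsilon}(r)|^{2}\,dr\to 0$, and resolving that by the three-term decomposition (contraction property of $k_{\epsilon}$, strong convergence of Proposition \ref{u^{epsilon} to u}, dominated convergence for $k_{\epsilon}{\bf u}-{\bf u}$). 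What the paper's route buys is brevity and an argument that stays entirely at the level of path-space functionals and weak convergence; what your route buys is an explicit, self-contained proof of the step the paper glosses over, at the cost of invoking the solution structure and the strong $L^{2}(\Omega;L^{2}(0,T;H))$ convergence. Both arguments share the paper's standing conventions: the limit is taken along the subsequence on which ${\bf u}^{\epsilon}$ converges, and boundary effects of mollification are ignored, exactly as the paper does when writing $\mathbb{E}|k_{\epsilon}{\bf u}_{0}|^{3}\leq\mathbb{E}|{\bf u}_{0}|^{3}$.
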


Finally, we are at the stage to prove Theorem \ref{THE main theorem}.

\begin{proof}[Proof of Theorem \ref{THE main theorem}]
Given ${\bf u}_{0}$ satisfying the assumption, we define  ${\bf u}^{\epsilon}_{0}\colonequals k_{\epsilon}{\bf u}_{0}$. Then ${\bf u}^{\epsilon}_{0}$ is an $H$-valued random variable with 
$
\mathbb{E}|{\bf u}^{\epsilon}_{0}|^{3}=\mathbb{E}|k_{\epsilon}{\bf u}_{0}|^{3}\leq\mathbb{E}|{\bf u}_{0}|^{3}<\infty.
$

Given $({\bf u}_{0}, {\bf f})$ satisfying the assumption, we consider the pair of initial condition and external forcing $({\bf u}^{\epsilon}_{0}, {\bf f})$. Then this pair gives a unique strong solution to equation \eqref{N-S M} by Theorem \ref{existence and uniqueness of N-S M}, and we denote the sequence of solutions by $\{{\bf u}^{\epsilon}\}_{\epsilon>0}$.

By Proposition \ref{u^{epsilon} to u}, we see that there is a limit of ${\bf u}^{\epsilon}$ in the space
$L^{2}(\Omega; L^{2}(0, T; H))$, and we denote it by ${\bf u}$. Moreover, we denote by $\mu_{\epsilon}$ the distribution of $({\bf u}^{\epsilon}(t), \mathfrak{r}(t))$ and $\mu$ the distribution of $({\bf u}(t), \mathfrak{r}(t))$. 

Following the same argument as in Section \ref{sec exist}, it suffices to prove the following:
\begin{enumerate}
\item [$\bf M1'.$] There exists a sequence $\{\mu_{\epsilon_n}\}$ that  converges weakly to $\mu$ as $\epsilon_n \to 0$.
\item [$\bf M2'.$] $\displaystyle{\lim_{n\rightarrow \infty}}{\bf M}^{\phi}_{\epsilon_n}(t)={\bf M}^{\phi}(t)$. 
\item [$\bf M3'.$] $\displaystyle{\lim_{n\rightarrow \infty}}\mathbb{E}^{\mu_{\epsilon_n}}{\bf M}^{\phi}(t)=\mathbb{E}^{\mu}{\bf M}^{\phi}(t)$,
\end{enumerate}
where ${\bf M}^{\phi}(t)$ and ${\bf M}^{\phi}_{\epsilon_n}(t)$ are defined earlier in this section.

Now, as ${\bf M1'}, {\bf M2'}$, and ${\bf M3'}$ are direct consequences of Propositions \ref{u^{epsilon} to u}, \ref{M2'}, and \ref{M3'}, respectively, the proof is complete.
\end{proof}

\section*{Acknowledgements}
This work is a part of the PhD thesis of the first author. He thanks Professor Sundar for guidance, and Professor Xiaoliang Wan for financial support from NSF grant DMS-1622026.


\end{document}